\theoremstyle{definition}
\newtheorem{theorem}{Theorem}[section]
\newtheorem{proposition}[theorem]{Proposition}
\newtheorem{lemma}[theorem]{Lemma}
\newtheorem{remark}[theorem]{Remark}
\newtheorem{corollary}[theorem]{Corollary}
\numberwithin{equation}{section}
\DeclareMathOperator*{\spa}{span}
\renewcommand\div{\operatorname{div}}
\DeclareMathOperator*{\id}{id}
\newcommand{\e}{\operatorname{e}}
\newcommand{\wt}{\widetilde}
\newcommand{\pr}{\partial}
\newcommand{\Lap}{\Delta}
\newcommand{\Ric}{\operatorname{Ric}}
\renewcommand*\d{\mathop{}\!\mathrm{d}}
\DeclareMathOperator*{\diam}{diam}
\DeclareMathOperator*{\spec}{spec}
\title[{\L}ojasiewicz inequalities for cylindrical self-shrinkers]{{\L}ojasiewicz inequalities, uniqueness and rigidity for cylindrical self-shrinkers}
\author{Jonathan J. Zhu}
\address{Mathematical Sciences Institute, Australian National University, Hanna Neumann Building, Science Road, Canberra, ACT 2601, Australia and Department of Mathematics, Princeton University, Fine Hall, Washington Road, Princeton, NJ 08544, USA}
\email{jjzhu@math.princeton.edu}
\begin{document}
\begin{abstract}
We establish {\L}ojasiewicz inequalities for a class of cylindrical self-shrinkers for the mean curvature flow, which includes round cylinders and cylinders over Abresch-Langer curves, in any codimension. We deduce the uniqueness of blowups at singularities modelled on this class of cylinders, and that any such cylinder is isolated in the space of self-shrinkers. The Abresch-Langer case answers a conjecture of Colding-Minicozzi. Our proof uses direct perturbative analysis of the shrinker mean curvature, so it is new even for round cylinders. 
\end{abstract}
\date{\today}
\maketitle

\section{Introduction}
%shrinkers
Self-shrinkers (or just shrinkers) are submanifolds $\Sigma^n \subset \mathbb{R}^N$ that satisfy the elliptic PDE
\begin{equation}
\label{eq:shrinker}
\phi := \frac{x^\perp}{2} - \mathbf{H} =0,
\end{equation}
where $x^\perp$ is the normal projection of the position vector, and $\mathbf{H}$ is the mean curvature vector. Shrinkers correspond to homothetically shrinking solutions of the mean curvature flow (MCF), which describes the motion of submanifolds $\Sigma_t^n \subset \mathbb{R}^N$ by the parabolic PDE
\begin{equation}
(\pr_t x)^\perp = -\mathbf{H}. %
\end{equation}

%uniqueness/rigidity
Shrinkers also model singularities of MCF, arising as (subsequential) rescaling limits, or blowups. Uniqueness of a blowup (that is, independence from the sequence) is a fundamental question in singularity analysis, which for MCF is intertwined with the question of whether a shrinker is rigid, or isolated in the space of shrinkers. Uniqueness and rigidity for round cylinders were major open problems, recently proven by Colding and Minicozzi \cite{CM15, CM19, CMcomp} together with Ilmanen \cite{CIM}, and have many powerful consequences for generic MCF \cite{CMsurvey}. 

In this article, we define a class of `simply non-integrable' generalised shrinking cylinders $\mathring{\Gamma}^k \times \mathbb{R}^{n-k}$, and show that they are rigid, and give unique tangent flows if $\mathring{\Gamma}$ is embedded. Simply non-integrable cylinders have tame Jacobi fields, and this class includes round cylinders $\mathbb{S}^k_{\sqrt{2k}}\times \mathbb{R}^{n-k}$, and cylinders $\mathring{\Gamma}_{a,b}^1 \times \mathbb{R}^{n-1}$ over Abresch-Langer curves $\mathring{\Gamma}_{a,b}$ \cite{AL}. 
%%statements

\begin{theorem}[Uniqueness of tangent flows]
\label{thm:unique}
Let $\mathring{\Gamma}^k \subset \mathbb{R}^{k+m}$ be a closed, simply non-integrable shrinker and let $M_t^n$ be a MCF in $\mathbb{R}^N$. Assume either that $\mathring{\Gamma}$ is embedded, or that $M_t$ has a type I singularity at $(x_0,t_0)$. If one tangent flow at $(x_0,t_0)$ is induced by $\Gamma =\mathring{\Gamma}\times \mathbb{R}^{n-k}$ with multiplicity one, then all tangent flows $(x_0,t_0)$ are given by $\Gamma$ (with multiplicity one, and no rotation). 
\end{theorem}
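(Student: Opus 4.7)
The plan is to run the Łojasiewicz–Simon strategy, adapted by Colding--Minicozzi in the round-cylinder case, using the Łojasiewicz inequalities that (I expect) are the main technical input established earlier in the paper. Translate the problem to the rescaled MCF $\Sigma_s$ around $(x_0,t_0)$. Huisken's monotonicity gives the Gaussian area (entropy) $F(\Sigma_s)$ as a non-increasing quantity with
\[
\tfrac{d}{ds} F(\Sigma_s) = -\int_{\Sigma_s} |\phi|^2 e^{-|x|^2/4} \, d\mu.
\]
The hypothesis provides a sequence $s_j \to \infty$ along which $\Sigma_{s_j} \to \Gamma$ smoothly with multiplicity one (on compact sets), so $F(\Sigma_s) \to F(\Gamma)$, and the shrinker $\Gamma$ is the natural ``critical point'' to linearize at.

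For all sufficiently large $s$, the rescaled flow lies in a weighted-$C^{2,\alpha}$ (or weighted Sobolev) neighborhood of some isometric image $g_s \cdot \Gamma$ of $\Gamma$, where $g_s$ runs over the finite-dimensional orbit of rigid motions preserving the cylindrical structure (translation along $\mathbb{R}^{n-k}$ and rotation of $\mathbb{R}^{n-k}$). The key input I would invoke is a Łojasiewicz inequality of the form
\[
|F(\Sigma) - F(\Gamma)|^{2-\theta} \le C \int_\Sigma |\phi|^2 e^{-|x|^2/4} \, d\mu,
\]
valid for $\Sigma$ in such a weighted neighborhood of the orbit, together with a companion ``gradient'' Łojasiewicz bounding the distance to the orbit by a power of $\|\phi\|_{L^2_W}$. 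The simple non-integrability hypothesis is exactly what buys us these inequalities despite the kernel of the Jacobi operator coming from translations/rotations and the integrable $\mathring{\Gamma}$-directions.

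Combining the Łojasiewicz inequality with the monotonicity identity yields, by the standard ODE argument,
\[
-\tfrac{d}{ds}\bigl(F(\Sigma_s) - F(\Gamma)\bigr)^{\theta} \;\ge\; c \, \|\phi\|_{L^2_W(\Sigma_s)},
\]
which integrates to $\int^\infty \|\phi\|_{L^2_W} \, ds < \infty$. Since $\|\partial_s \Sigma_s\|_{L^2_W} = \|\phi\|_{L^2_W}$ for the rescaled flow, the trajectory has finite length in the weighted $L^2$ norm, which upgrades subsequential convergence to genuine convergence of $\Sigma_s$ as $s \to \infty$ to some element of the orbit. The companion gradient Łojasiewicz then forces the orbit parameter $g_s$ to stabilize; this is what yields a single limit $\Gamma$ (translated but not rotated in the correct frame).

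The main obstacle, and where I would spend most of the work, is upgrading the subsequential smooth convergence in the hypothesis to the stronger weighted-neighborhood convergence needed to feed into the Łojasiewicz inequality, uniformly for all large $s$. Two issues must be handled: (i) the non-compactness of $\Gamma = \mathring{\Gamma} \times \mathbb{R}^{n-k}$, which forces the analysis to take place in a Gaussian-weighted norm and requires a priori estimates uniform in the $\mathbb{R}^{n-k}$ direction; and (ii) avoiding ``escape'' of $\Sigma_s$ from the weighted neighborhood along non-compact directions. Here the dichotomy in the hypothesis is used: the Type I assumption gives bounded $|A|$ on the rescaled flow, yielding compactness directly, while the embeddedness assumption allows one to use Brakke/White regularity and the multiplicity-one tangent flow to propagate smooth sheeting estimates forward in $s$. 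Ruling out rotational drift, in particular, will require exploiting the orbit-gradient Łojasiewicz to show that rotation parameters in $SO(n-k)$ have finite variation and hence converge, and then that the limiting rotation is the identity in the chosen frame.
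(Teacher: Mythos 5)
Your high-level strategy is the right one and is indeed the one the paper follows: use Huisken monotonicity, establish \L ojasiewicz inequalities for the cylinder, deduce a (discrete) differential inequality for $F(\Sigma_s)-F(\Gamma)$, sum/integrate to get finite trajectory length, and conclude uniqueness. However, there is a genuine gap in how you handle the noncompactness, and it is precisely the gap that the extension/improvement machinery of Colding--Minicozzi is designed to fill.

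The \L ojasiewicz inequalities actually proved in the paper (Theorems \ref{thm:lojasiewicz-intro} and \ref{thm:lojasiewicz-grad}) are \emph{not} of the clean form $|F(\Sigma)-F(\Gamma)|^{2-\theta}\leq C\|\phi\|^2_{L^2}$ valid on a weighted neighborhood of the orbit; they are stated only for hypersurfaces that are graphs over $\Gamma$ on a fixed ball $B_R$, and they carry an additive exponential error term of size roughly $R^n e^{-R^2/4}$ coming from the cutoff. This error term is not negligible unless $R$ is taken up to the \emph{shrinker scale} $R_T$ defined by $e^{-R_T^2/2}=\int_{T-1}^{T+1}\|\phi\|^2_{L^2(\Sigma_s)}\,ds$. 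The technical heart of the argument is therefore the iterative \emph{extension step} (Proposition \ref{prop:extension}, using pseudolocality/Brakke--White regularity plus short-time stability) alternated with the \emph{improvement step} (Theorem \ref{thm:improvement}, a consequence of Theorem \ref{thm:lojasiewicz-intro}), which pushes the graphical scale up from a fixed $R_1$ to $(1+\mu)R_T$ while maintaining $L^2$ and curvature control. Your proposal frames the obstacle as ``upgrading subsequential convergence to uniform weighted closeness'' and then appeals to a global weighted-neighborhood \L ojasiewicz inequality, but no such global inequality is available for these noncompact cylinders; the cutoff error must be beaten by choosing the scale adapted to $\|\phi\|_{L^2}$, which your outline does not address.

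Two further points. First, the initial uniform closeness condition ($\dagger$) -- that $\Sigma_s\cap B_{R_1}$ is a small graph over \emph{some} cylinder in $\mathcal{C}_n(\mathring{\Gamma})$ for all large $s$ -- is not obtained by soft compactness alone; the paper derives it as a corollary of the rigidity Theorem \ref{thm:rigidity} applied to all subsequential blowup limits (as in \cite{CIM}), combined with White's version of Brakke regularity (embedded case) or the Type I curvature bound (immersed case). Your proposal gestures at the Type I/embedded dichotomy but does not connect it to rigidity, which is the step that actually supplies ($\dagger$). Second, the paper runs a \emph{discrete} differential inequality $|F(\Sigma_T)-F(\Gamma)|\leq K(F(\Sigma_{T-1})-F(\Sigma_{T+1}))^{\frac{1+\mu}{2}}$ (Theorem \ref{thm:final-ext}) and then invokes \cite[Lemma 6.9]{CM15} for the summation argument, rather than the continuous ODE $-\frac{d}{ds}(F-F(\Gamma))^\theta\gtrsim\|\phi\|_{L^2}$; the discrete version is essential here because the graphical control is only established on unit time intervals around each $T$, again because of the noncompactness. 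The ``no rotation'' conclusion then follows not from a separate orbit-gradient argument but from the $L^1$ contraction estimate together with the finite $\ell^{1/2}$-sum, which pins down a single limiting cylinder.
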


\begin{theorem}[Rigidity]
\label{thm:rigidity}
Let $\mathring{\Gamma}^k \subset \mathbb{R}^{k+m}$ be a closed, simply non-integrable shrinker and let $\Sigma^n\subset \mathbb{R}^N$ be another shrinker. Assume either that $\mathring{\Gamma}$ is embedded, or $\sup_\Sigma |A_\Sigma|^2 \leq C_1$. There exists $R_1=R_1(\lambda_0,C_1)>0$ such that if $\Sigma$ has entropy $\lambda(\Sigma)\leq \lambda_0$ and $\Sigma\cap B_R$ is a graph $U$ over $\Gamma =\mathring{\Gamma}\times \mathbb{R}^{n-k}$ for some $R\geq R_1$, and $\|U\|_{C^{2,\alpha}} \leq R^{-1}$, then $\Sigma$ is a rotation of $\Gamma$. 
\end{theorem}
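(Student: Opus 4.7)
The strategy is to combine the \L ojasiewicz inequality for the shrinker operator $\phi$, established earlier in the paper, with the fact that $\phi\equiv 0$ on any shrinker. For a simply non-integrable cylinder the kernel of the Jacobi operator consists only of rotational variations, so one expects a \L ojasiewicz estimate of the schematic form
\begin{equation*}
\inf_{O\in SO(N)}\|U-U_O\|_W \;\leq\; C\,\|\phi(U)\|^{\beta}
\end{equation*}
for graph functions $U$ in a suitable Gaussian-weighted Sobolev space $W$, where $U_O$ parametrises graphs of nearby rotations $O\cdot\Gamma$ and $\beta>0$. Applied to a shrinker $\Sigma$ this forces $U=U_O$, hence $\Sigma=O\cdot\Gamma$.

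The first step is to upgrade the local hypothesis --- $\Sigma\cap B_R$ is a $C^2$-small graph over $\Gamma$ --- to the global graphical control required by the \L ojasiewicz inequality. The Gaussian weight $e^{-|x|^2/4}$ makes the region $|x|\geq R$ negligible for $R\geq R_1(\lambda_0,C_1)$, provided $\Sigma$ does not misbehave there. In the embedded case, embeddedness of $\mathring{\Gamma}$ together with $\lambda(\Sigma)\leq\lambda_0$ forces $\Sigma$ to remain a single sheet in a tubular neighbourhood of $\Gamma$: extra sheets or detachments would violate entropy or embeddedness via a sheeting/Brakke-regularity argument in the spirit of Colding--Ilmanen--Minicozzi. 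In the non-embedded case, the a priori bound $|A_\Sigma|\leq C_1$ yields uniform interior regularity and one extends the graph outward by a continuation/bootstrap argument. In either case one obtains a graph function defined on a region of diameter $\gtrsim R$ with norm controlled by $R^{-1}$ in the relevant weighted space.

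Once $U$ is controlled in $W$, we apply the \L ojasiewicz inequality. Since $\phi(U)=0$, the right-hand side collapses to tail contributions from $|x|\geq R$, which are of order $e^{-cR^2}$ by the entropy and curvature bounds. Choosing $R_1$ large enough that this is strictly smaller than any nontrivial separation between $U$ and the finite-dimensional family $\{U_O\}$ yields $U=U_O$ for some $O$; real-analyticity of shrinkers and unique continuation then promote $\Sigma=O\cdot\Gamma$ on the graphical region to global equality. I expect the main difficulty to lie in the extension step for non-round cylinders: controlling, for instance, cylinders over Abresch--Langer curves globally from local graphical data is more delicate than the round case because the underlying geometry lacks the symmetry and convexity exploited by Colding--Ilmanen--Minicozzi, so both the sheeting argument and the quantitative dependence $R_1=R_1(\lambda_0,C_1)$ must be recovered in this broader simply non-integrable setting, and the tail estimates tracked with care to survive the \L ojasiewicz exponent $\beta$.
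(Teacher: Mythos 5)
Your high-level idea is right — use the \L ojasiewicz inequality with $\phi\equiv 0$ to force the graph to collapse onto a rotation — and you have correctly identified that extension from the local graph to larger scales is the delicate step. But the mechanism you propose does not match how the argument actually closes, and I do not think your version works as stated.

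The paper's proof (Section~\ref{sec:applications}) does not extend the graph globally once and then apply \L ojasiewicz once. It \emph{interleaves} two steps in an iteration: the extension step (Corollary~\ref{cor:extension-static}), which is a short-time-stability/pseudolocality argument that increases the graphical radius only by a fixed multiplicative factor $(1+\mu)$ while keeping the $C^{2,\alpha}$ norm below $\epsilon_2$; and the improvement step (Theorem~\ref{thm:improvement}), which applies the \L ojasiewicz inequality to re-center the cylinder and drive the $L^2$ graph norm back down below $e^{-R^2/4}$ at the new scale. Neither step works alone: the extension step would lose graphical control after finitely many applications without the improvement step resetting the norm, and the improvement step needs the $C^2$ and curvature bounds supplied by the extension step. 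Your plan of extending globally in one shot — by sheeting/Brakke regularity (embedded case) or continuation/bootstrap (curvature-bounded case) — skips this feedback loop, and there is no a priori reason the graph stays small on $B_{CR}$ for $C$ large if you do not first re-apply \L ojasiewicz at each intermediate scale.

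The second issue is your ``separation'' argument. You propose to conclude $U=U_O$ because the \L ojasiewicz bound is smaller than ``any nontrivial separation between $U$ and the finite-dimensional family $\{U_O\}$''. No such separation exists: the family $\{U_O\}$ is a smooth submanifold of the configuration space, and there are graphs arbitrarily close to it but not on it. At any \emph{fixed} radius $R$, the \L ojasiewicz inequality only gives $\|V\|_{L^2}^2\lesssim \delta_{R-5}^\beta$, a small but nonzero quantity, which is perfectly compatible with $V\neq 0$. The actual conclusion $\Sigma\in\mathcal{C}_n(\mathring{\Gamma})$ comes from letting $R\to\infty$ along the iteration: since $\Sigma$ is a graph $V$ over some cylinder on $B_R$ with $\|V\|_{L^2}^2\le e^{-R^2/4}$ for arbitrarily large $R$, and since the graphical domains exhaust $\Sigma$, the only possibility is $V\equiv 0$ in the limit. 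Once this is in place, the unique-continuation / analyticity step you invoke is unnecessary — the iterated graphical representation already covers all of $\Sigma$ directly.

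So the gap is concrete: you are missing the extension-improvement iteration of \cite{CIM, CM15, CM19}, which is what both produces the global graph and makes the \L ojasiewicz bound tend to zero. Without it, the one-shot application of the \L ojasiewicz inequality at a fixed scale gives only a smallness statement, not rigidity.
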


Simple non-integrability is defined in Section \ref{sec:assumptions} by certain conditions on the cross-section $\mathring{\Gamma}$, which again are satisfied by $\mathbb{S}^k_{\sqrt{2k}}$ and $\mathring{\Gamma}_{a,b}$. The Abresch-Langer case of Theorem \ref{thm:rigidity} answers a conjecture of Colding-Minicozzi \cite[Section 6.4]{CMcomp}, and both theorems provide new, conceptually simpler proofs for the case of round cylinders. Recall that a MCF has a type I singularity at time $t_0$ if $\sup_{t<t_0} \sup_{M_t} \sqrt{t_0-t} |A_{M_t}| <\infty$. %%

%lojasiewicz

The entropy of a shrinker coincides with its Gaussian area \[\textstyle F(\Sigma^n) =\int_\Sigma \rho \d \mu_\Sigma,\qquad \text{ where }\rho=  (4\pi)^{-n/2}  \e^{-|x|^2/4}.\] The $L^2$-gradient of $F$ is precisely the shrinker quantity $\phi$, which characterises shrinkers as the critical points of $F$. The key as in \cite{CM15, CM19} to proving Theorems \ref{thm:unique} and \ref{thm:rigidity} is to establish {\L}ojasiewicz inequalities for generalised shrinking cylinders $\mathring{\Gamma} \times \mathbb{R}^{n-k}$. Let $\mathcal{C}_n(\mathring{\Gamma})$ denote the set of all rotations of $\mathring{\Gamma} \times \mathbb{R}^{n-k}$ about the origin in $\mathbb{R}^N$. Our main theorems are the following {\L}ojasiewicz inequalities (see also Theorem \ref{thm:lojasiewicz}; cf. \cite[Theorems 0.24, 0.26]{CM15}):

\begin{theorem}[{\L}ojasiewicz inequality of the first kind]
\label{thm:lojasiewicz-intro} 
 Let $\mathring{\Gamma}$ be a closed, simply non-integrable shrinker.
Given $q\in(1,\frac{4}{3}), \beta\in(\frac{1}{q},1)$, there exist $\epsilon_2>0, l$ so that the following holds: For any $\epsilon_1$, $\lambda_0$, $C_j$ there is an $R_0$ such that if $\Sigma^n\subset \mathbb{R}^N$ has $\lambda(\Sigma)\leq \lambda_0$ and
\begin{enumerate}
\item for some $R>R_0$, we have that $B_R\cap \Sigma$ is the graph of a normal field $U$ over some cylinder in $\mathcal{C}_n(\mathring{\Gamma})$ with $\|U\|_{C^{2}(B_R)} \leq \epsilon_2$ and $\|U\|_{L^2(B_R)}^2 \leq \e^{-R^2/8}$, 
\item $|\nabla^j A| \leq C_j$ on $B_R\cap \Sigma$ for all $j \leq l$;
\end{enumerate}
then there is a cylinder $\Gamma \in\mathcal{C}_n(\mathring{\Gamma})$ and a compactly supported normal vector field $V$ over $\Gamma$ with $\|V\|_{C^{2,\alpha}} \leq \epsilon_1$, such that $\Sigma \cap B_{R-6}$ is contained in the graph of $V$, and 

\[ \|V\|_{L^2}^2 \leq C(\|\phi\|_{L^q}+\|\phi\|_{L^2}^{2\beta} + (R-5)^{\beta n} \e^{-\beta \frac{(R-5)^2}{4}}) ,\]
where $C=C(n,\beta,q,C_j,\lambda_0, \epsilon_1)$. 
\end{theorem}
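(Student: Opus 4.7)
The plan is to establish the inequality by direct perturbative analysis of the shrinker quantity $\phi$ along graphs over cylinders in $\mathcal{C}_n(\mathring{\Gamma})$, adapting the Colding--Minicozzi \L ojasiewicz-Simon scheme but bypassing the variational route through $F$. First, using hypothesis (1) together with compactness of the rotation action on $\mathcal{C}_n(\mathring{\Gamma})$, I would choose an optimal cylinder $\Gamma \in \mathcal{C}_n(\mathring{\Gamma})$ so that $\Sigma \cap B_R$ is the normal graph of a small field $U$ over $\Gamma$. Let $\eta$ be a smooth cutoff supported in $B_{R-5}$ and equal to one on $B_{R-6}$, and form the compactly supported extension $V := \eta U$. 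The $L^2$ tail hypothesis $\|U\|_{L^2(B_R)}^2 \le e^{-R^2/8}$ and the Gaussian weight confine all errors coming from the annulus $B_{R-5}\setminus B_{R-6}$ to the exponential term $(R-5)^{\beta n}\, e^{-\beta(R-5)^2/4}$ appearing in the conclusion.

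\textbf{Linearisation and spectral decomposition.}
Next I would expand the shrinker quantity along the cutoff graph $\tilde\Sigma := \operatorname{graph}_\Gamma V$:
\[ \phi_{\tilde\Sigma} = L V + Q(V), \]
where $L$ is the Jacobi (stability) operator of $F$ at $\Gamma$ and $Q$ collects quadratic and higher terms in $V$ up to two derivatives, controlled by $\|U\|_{C^2}\le \epsilon_2$ and hypothesis (2). The field $\phi_{\tilde\Sigma}$ agrees with $\phi_\Sigma$ on $B_{R-6}$, vanishes outside $B_{R-5}$, and differs by a cutoff commutator term on the annulus which the Gaussian weight absorbs into the tail. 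The product structure $\Gamma = \mathring{\Gamma}\times \mathbb{R}^{n-k}$ diagonalises $L$ as the Jacobi operator of $\mathring{\Gamma}$ tensored with the Ornstein--Uhlenbeck operator on $\mathbb{R}^{n-k}$, yielding an explicit eigenbasis of Hermite polynomials in the fibre coupled to eigenfunctions on $\mathring{\Gamma}$. I then split $V = V_0 + V_1$ with $V_0$ in the weighted $L^2$-kernel of $L$ and $V_1$ in its orthogonal complement; by optimality of $\Gamma$, the rotation and translation modes in $V_0$ are annihilated, leaving only the `simply non-integrable' Jacobi directions on $\mathring{\Gamma}$ (possibly coupled to the lowest Hermite modes).

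\textbf{Transverse and kernel bounds.}
On $(\ker L)^\perp$ the operator $L$ has a uniform spectral gap, so standard elliptic estimates plus Hermite-expansion arguments give $\|V_1\|_{L^2}^2 \lesssim \|\phi\|_{L^2}^2 + (\text{tail})$ once $Q(V)$ is absorbed by smallness. For $V_0$, the simple non-integrability hypothesis supplies, for each Jacobi field $w$ in the finite-dimensional residual kernel, a nondegenerate obstruction arising at the next nontrivial Taylor coefficient of $s \mapsto \phi_{\operatorname{graph}(sw)}$. Converted into a quantitative lower bound, this yields a super-linear estimate $|v|^{1+\gamma} \lesssim \|\phi\|_{L^q}$ for $v$ the $w$-coefficient of $V_0$, with some $\gamma>0$ tied to the order of the obstruction and $q\in(1,\tfrac{4}{3})$; summing over a finite basis of the kernel produces $\|V_0\|_{L^2}^2 \lesssim \|\phi\|_{L^q}$.

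\textbf{Combination and main obstacle.}
Assembling the transverse and kernel bounds, then interpolating between $\|\phi\|_{L^q}$ and higher moments controlled by hypothesis (2) and the entropy bound $\lambda(\Sigma)\le \lambda_0$, produces the $\|\phi\|_{L^2}^{2\beta}$ term and so the stated \L ojasiewicz inequality. The main technical obstacle is the kernel step: making the higher-order obstruction from simple non-integrability quantitative directly at the level of $\phi$, rather than the functional $F$, requires careful identification of the obstruction tensor via a third-order expansion of the shrinker quantity along Jacobi directions, together with uniform control as $\Gamma$ varies over the orbit $\mathcal{C}_n(\mathring{\Gamma})$. This is precisely where simple non-integrability is used in its strongest form, and it is here that the direct perturbative approach departs from and conceptually simplifies the variational argument of \cite{CM15}.
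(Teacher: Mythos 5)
Your high-level strategy — Taylor-expand the shrinker quantity $\phi$ along graphs over cylinders, split the cutoff graph into the $L$-kernel and its orthogonal complement (with the Hermite/Ornstein--Uhlenbeck structure along the fibre), bound the complement by a spectral gap, and control the residual kernel via the second-order obstruction from simple non-integrability, using $L^q$, $q$ close to $1$, to soften the noncompactness — coincides with the paper's proof (which derives Theorem \ref{thm:lojasiewicz-intro} from Theorem \ref{thm:lojasiewicz}, and the latter from the entire-graph estimate Theorem \ref{thm:entire-est} and the second-variation obstruction Proposition \ref{prop:d2phiEst}). However, there is a genuine gap in your treatment of the rotation/gauge-fixing step.

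You propose to invoke ``compactness of the rotation action'' to select an ``optimal cylinder'' $\Gamma\in\mathcal{C}_n(\mathring{\Gamma})$ so that the rotation modes in $V_0$ are annihilated. This is the slice-theorem argument, and it is exactly what the paper identifies as \emph{not} directly available here: because the cylinder is noncompact and one only has a graph over $B_R$ (not a global graph in a function space where a slice theorem would apply), minimizing over rotations does not produce a graph that is $L^2$-orthogonal to $\mathcal{K}_0$. Nor is it clear that a minimizer exists with quantitative control independent of $R$. The paper instead (Section \ref{sec:rotation}) rotates the base cylinder \emph{by hand} by the Lie element generated by $\pi_{\mathcal{K}_0}(U)$, and proves the quantitative Proposition \ref{prop:graph-rot}: the rotated graph $W$ is only \emph{almost}-orthogonal to $\mathcal{K}_0$, with $\|\pi_{\mathcal{K}_0}(W)\|_{L^2}$ bounded by a power of $\|U\|_{L^2}$ strictly greater than one plus a cutoff tail. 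The $L^2$-smallness hypothesis $\|U\|_{L^2(B_R)}^2\le e^{-R^2/8}$ is precisely what makes this applied rotation controllable and keeps the residual $\pi_{\mathcal{K}_0}(V)$ term higher order, so that Theorem \ref{thm:entire-est} gives a genuine bound. Your proposal supplies no substitute for this step, and it cannot be waved away by ``optimality'': without it, the residual rotation contribution in the kernel part $V_0$ is of the same order as $\|V\|_{L^2}$ and the argument would not close. A secondary, minor point: you also need to perform the cutoff \emph{after} the change of graph base (the paper's $V=\eta_{R-5}W$, not $\eta U$), since the rotated graph lives over a different cylinder and the rotation estimates are only valid over $B_{R-4}$; your ``$V:=\eta U$'' skips this and would leave the cutoff errors unaccounted for in the $\mathcal{K}_0$-projection bound.
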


Theorem \ref{thm:lojasiewicz-intro} bounds the $L^2$ distance to $\mathcal{C}_n(\mathring{\Gamma})$ by $\|\phi\|_{L^q}$, where $q$ may be taken arbitrarily close to $1$, plus a small exponential error term and a higher order $\|\phi\|_{L^2}$ term. 

\begin{theorem}[{\L}ojasiewicz inequality of the second kind]
\label{thm:lojasiewicz-grad} 
 Let $\mathring{\Gamma}$ be a closed, simply non-integrable shrinker. 
There exist $\epsilon_2>0, l$ so that for any $\lambda_0$, $C_j$ there is an $R_0$ such that if $l\geq l_0$, $\Sigma^n\subset \mathbb{R}^N$ has $\lambda(\Sigma)\leq \lambda_0$ and
\begin{enumerate}
\item for some $R>R_0$, we have that $B_R\cap \Sigma$ is the graph of a normal field $U$ over some cylinder in $\mathcal{C}_n(\mathring{\Gamma})$ with $\|U\|_{C^{2}(B_R)} \leq \epsilon_2$ and $\|U\|_{L^2(B_R)}^2 \leq \e^{-R^2/8}$, 
\item $|\nabla^j A| \leq C_j$ on $B_R\cap \Sigma$ for all $j \leq l$;
\end{enumerate}
then for some $C=C(n,C_j,\lambda_0)$ we have 

\[ |F(\Sigma) - F(\mathring{\Gamma})| \leq C(\|\phi\|_{L^2}^\frac{3}{2} + (R-5)^{\frac{3n}{4}} \e^{-\frac{3(R-5)^2}{16}}) .\]
\end{theorem}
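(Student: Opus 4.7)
The plan is to combine the first Łojasiewicz inequality (Theorem~\ref{thm:lojasiewicz-intro}) with a first-variation / Taylor expansion of the entropy $F$ around a nearby cylinder. First, I invoke Theorem~\ref{thm:lojasiewicz-intro} with $q \in (1, \tfrac{4}{3})$ close to $1$ and $\beta \in (1/q, 1)$ close to $1$, obtaining a cylinder $\Gamma \in \mathcal{C}_n(\mathring{\Gamma})$ and a compactly supported normal field $V$ over $\Gamma$ with $\Sigma \cap B_{R-6} \subset \mathrm{graph}(V)$, $\|V\|_{C^{2,\alpha}} \leq \epsilon_1$, and
\[
\|V\|_{L^2}^2 \leq C\bigl(\|\phi\|_{L^q} + \|\phi\|_{L^2}^{2\beta} + (R-5)^{\beta n} \e^{-\beta(R-5)^2/4}\bigr).
\]
Since $F$ is invariant under rotations of $\mathbb{R}^N$ and translations along the $\mathbb{R}^{n-k}$-factor, $F(\Gamma) = F(\mathring{\Gamma})$, so it suffices to estimate $|F(\Sigma) - F(\Gamma)|$.

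Next, with $\Sigma_s$ the graph of $sV$ over $\Gamma$ for $s \in [0,1]$, the first variation of $F$ and the identity $\phi_\Gamma = 0$ give
\[
F(\Sigma) - F(\Gamma) = \int_0^1 \int_{\Sigma_s} \langle \phi_{\Sigma_s}, V\rangle \, \rho\, d\mu_s \, ds.
\]
Taylor expanding $\phi_{\Sigma_s}$ at $s=0$ in the small field $sV$, using the relation $\phi_\Sigma = LV + Q(V)$ (with $L$ the Jacobi operator of $\Gamma$ and $Q$ the quadratic nonlinearity), and integrating over $s$, I obtain
\[
|F(\Sigma) - F(\Gamma)| \leq C\bigl(\|\phi\|_{L^2}\|V\|_{L^2} + \mathcal{R}(V)\bigr),
\]
where $\mathcal{R}(V)$ is a cubic remainder pointwise bounded by products of $V$ and at most two of its derivatives.

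For the leading term, Hölder's inequality on the bounded support of $V$ gives $\|\phi\|_{L^q} \leq C_R \|\phi\|_{L^2}$, while $\|\phi\|_{L^2}^{2\beta}$ is absorbed into $\|\phi\|_{L^2}$ for $\|\phi\|_{L^2}$ small (since $2\beta > 1$). Consequently $\|V\|_{L^2} \leq C\|\phi\|_{L^2}^{1/2}$ up to an exponential error, and multiplying by $\|\phi\|_{L^2}$ gives $\|\phi\|_{L^2}\|V\|_{L^2} \leq C\|\phi\|_{L^2}^{3/2}$ plus a cross-term which Young's inequality reduces to the stated exponential error $(R-6)^n \e^{-(R-6)^2/4}$.

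The main obstacle is controlling the cubic remainder $\mathcal{R}(V)$, since the naïve bound $\mathcal{R}(V) \leq C\|V\|_{C^2}^2\|V\|_{L^2}$ is insufficient: the factor $\|V\|_{C^2}$ need not vanish with $\|\phi\|_{L^2}$. To overcome this, I use the hypothesis $|\nabla^j A| \leq C_j$ for $j \leq l$ together with the graph form of the shrinker equation to bound $|\nabla^j V|$ pointwise on a slightly smaller sub-ball of $B_R$. Interior Gagliardo--Nirenberg interpolation then yields $\|V\|_{C^j} \leq C\|V\|_{L^2}^{1-\eta(j,l)}$ with $\eta(j,l) \to 0$ as $l \to \infty$. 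Taking $l \geq l_0$ large enough, and combining with $\|V\|_{L^2}^2 \leq C\|\phi\|_{L^2}$ plus exponential error, one absorbs $\mathcal{R}(V)$ into $C\|\phi\|_{L^2}^{3/2}$ up to the stated exponential error, completing the proof.
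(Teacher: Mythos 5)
Your overall strategy matches the paper's: invoke Theorem~\ref{thm:lojasiewicz-intro} to obtain the normal graph $V$ over a cylinder $\Gamma$, expand $F$ around $\Gamma$, handle the leading term $\|\phi\|_{L^2}\|V\|_{L^2}$ via H\"older and Young, and account for the cutoff error. That part is correct. The paper's actual proof is shorter because it cites Proposition~\ref{prop:F-exp}, which already asserts $|F(\Gamma_V) - F(\Gamma)| \leq C(\|\phi_V\|_{L^2}\|V\|_{L^2} + \|V\|_{L^2}^3)$; the only remaining steps are Young's inequality (giving $\|\phi_V\|_{L^2}\|V\|_{L^2} + \|V\|_{L^2}^3 \lesssim \|\phi_V\|_{L^2}^{3/2} + \|V\|_{L^2}^3$), substitution of the bound on $\|V\|_{L^2}^2$ from Theorem~\ref{thm:lojasiewicz-intro} raised to the $3/2$ power, and the cutoff estimate $|F(\Sigma) - F(\Gamma_V)| \leq C\delta_{R-6}$ via Lemma~\ref{lem:cutoff}.

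The genuine gap is in your treatment of the cubic remainder $\mathcal{R}(V)$. You propose to control it via Gagliardo--Nirenberg interpolation of the form $\|V\|_{C^j} \leq C\|V\|_{L^2}^{1-\eta(j,l)}$, using the $C^l$ bounds from the curvature hypothesis. This inequality is \emph{not} available in the Gaussian-weighted, noncompact setting relevant here. Because $\rho = (4\pi)^{-n/2}\e^{-|x|^2/4}$ decays rapidly, the unweighted $L^2$ norm of $V$ on $B_R$ can exceed the weighted norm $\|V\|_{L^2_\rho}$ by a factor of order $\e^{R^2/8}$; equivalently, the interior interpolation at scale $r = \langle x\rangle^{-1}$ (see Appendix~\ref{sec:interpolation}, equation~(\ref{eq:interp-est})) produces an exponential amplification factor $\e^{a|x|^2/(4p)}$ that cannot simply be discarded. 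Integrating the resulting pointwise bound cubed against $\rho$ diverges once $a = a_{m,j,n}$ is close to $1$ (which is exactly the regime $l \to \infty$ you want). So interpolation alone cannot trade curvature bounds for the Sobolev control of $V$ needed to bound $\mathcal{R}(V)$ by $\|V\|_{L^2_\rho}^3$. The paper instead obtains the requisite bound from the \emph{PDE structure}: the first-order Taylor expansion of $\phi$ (Lemma~\ref{lem:ord1exp}) combined with the decomposition $U = J + h$, the elliptic estimate for $h$ (Lemma~\ref{lem:elliptic-est}), and the polynomial growth of Jacobi fields (Corollary~\ref{cor:jacobi-est}) give the weighted $W^{2,4}$ estimate (\ref{eq:W24U}) in Proposition~\ref{prop:hW22}; this in turn is what underlies Proposition~\ref{prop:F-exp} (following the argument of Colding--Minicozzi, Proposition~6.5 of [CM19]). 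Without some version of these PDE-based weighted Sobolev bounds your cubic remainder remains uncontrolled, so your proof as written has a hole precisely at the step you flagged as "the main obstacle."
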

Note that $F(\Gamma) = F(\mathring{\Gamma})$ for any cylinder $\Gamma \in \mathcal{C}_n(\mathring{\Gamma})$. As typically expected, the {\L}ojasiewicz inequality of the first kind (`distance {\L}ojasiewicz') implies the second (`gradient {\L}ojasiewicz'). The celebrated {\L}ojasiewicz-Simon \cite{simon83} inequality is of the second kind. The uniqueness and rigidity Theorems \ref{thm:unique} and \ref{thm:rigidity} are deduced using the iterated extension/improvement schemes developed by Colding-Minicozzi \cite{CM15, CM19} and Colding-Ilmanen-Minicozzi \cite{CIM}. In particular, Theorem \ref{thm:lojasiewicz-intro} supplies the `improvement step', while we use the `extension step' essentially as proven in \cite{CIM, CM15, CM19}. 

A key feature of our proof of Theorem \ref{thm:lojasiewicz-intro} is that we use only perturbative analysis and Taylor expansion of $\phi$ (to order 2, corresponding to the exponent of $\|V\|_{L^2}$), inspired in part by \cite{ELS18}. By contrast, for round cylinders Colding-Minicozzi \cite{CM15, CM19} use a geometric argument relying on the auxiliary quantity $\tau = \frac{A}{|\mathbf{H}|}$, which is only well-defined if $\mathbf{H}$ is nonvanishing. Our analysis is thus more general, and it also gives the optimal exponents on $\|\phi\|$. 

\subsection{Simply non-integrable shrinkers}
\label{sec:assumptions}

Let $A$ be the second fundamental form, which takes values in the normal bundle, so that $\mathbf{H}$ is the negative trace of $A$. 

We say that a shrinker $\mathring{\Gamma}^k \subset \mathbb{R}^{k+m}$ is \textit{simply non-integrable} if it satisfies the following conditions (A1-A2), as well as $B_1(\mathring{\Gamma})\neq 0$. (Note that our convention is that $\lambda$ is an eigenvalue of $L$ with eigenvector $V$ if $LV= -\lambda V$, and similarly for $\mathcal{L}$.) 

\begin{itemize}
\item[(A1)] The drift Laplace operator $\mathcal{L} = \Lap - \frac{1}{2} \nabla_{x^\top}$ on $\mathring{\Gamma}$, acting on functions has $\frac{1}{2}$-eigenspace spanned by coordinate functions, \[\textstyle\ker (\mathcal{L}+\frac{1}{2}) = \spa \{x_i\}_{1\leq i\leq m} .\]

\item[(A2)] The Jacobi operator $L = \mathcal{L} + \frac{1}{2} + \sum_{i,j} \langle \cdot, A_{ij}\rangle A_{ij}$ on $\mathring{\Gamma}$, acting on normal vector fields, satisfies $\spec L \cap -\frac{1}{2}\mathbb{N} \subset \{0,-\frac{1}{2}, -1\}$; moreover the corresponding eigenspaces are spanned by infinitesimal rotations, translations and dilations respectively;
\begin{alignat*}{3}
 &\ker L &&= \spa\{ x_i\pr_{x_j}^\perp - x_j \pr_{x_i}^\perp\}_{1\leq i,j\leq k+m},\\
 & \textstyle\ker (L - \frac{1}{2}) &&= \spa\{\pr_{x_i}^\perp \}_{1\leq i\leq k+m},\\
 &\ker (L-1) &&= \spa\{ x^\perp \}= \spa\{ \mathbf{H}\}. 
\end{alignat*}
\end{itemize}

The quantity $B_1$ is defined by:

\begin{equation} B_1(\mathring{\Gamma}) = \int_{\mathring{\Gamma}} \left( A^\mathbf{H}_{ij} A^\mathbf{H}_{il} A^\mathbf{H}_{lj} - 3 A^\mathbf{H}_{ij} \langle \nabla^\perp_i \mathbf{H}, \nabla_j^\perp \mathbf{H}\rangle  \right)\rho.  \end{equation}

It is straightforward to check that (A1-A2) hold and $B_1(\mathring{\Gamma})<0$ on both round spheres and Abresch-Langer curves (see Section \ref{sec:prelim}). Note that these are conditions on the cross-section but non-integrability refers to the cylinder $\mathring{\Gamma}\times \mathbb{R}^{n-k}$; (A1-A2) imply that space of Jacobi fields on $\mathring{\Gamma}\times \mathbb{R}^{n-k}$ is minimal (Section \ref{sec:analysis1}) and $B_1$ arises in the variation of $\phi$ (Section \ref{sec:analysis2}). 

\subsection{Background}

%uniqueness of blowups
Uniqueness of blowups is one of the most important questions for the regularity theory of geometric PDEs. It has been studied in a wide range of contexts, to which we defer to \cite{CM15} and references therein. For MCF, a widely used notion of blowup is the tangent flow: One considers a sequence of parabolic dilations about a fixed spacetime point $(x_0,t_0)$ - for instance, about $(x_0,t_0)=(0,0)$ we take $\lim_{\lambda_i\to\infty} \lambda_i \cdot M_{t/\lambda_i^2}$. Every tangent flow is induced by a self-shrinker $\Gamma$ and we consider two important cases: when $\Gamma$ is smoothly embedded with multiplicity one, and when the singularity is type I. In these cases, convergence to $\Gamma$ is locally smooth (by Brakke's regularity theorem, or the work of Huisken \cite{Hui90}, respectively). 
Schulze \cite{S14} established uniqueness of tangent flows at all closed shrinkers. For general noncompact tangent flows, the first results were the uniqueness at round cylinders due to Colding-Minicozzi \cite{CM15, CM19}, which yielded a series of powerful consequences for MCF with cylindrical singularities \cite{CMsing, CMdiff, CMreg, CMarnold}. Recently, Chodosh-Schulze \cite{CS19} proved uniqueness at asymptotically conical shrinkers. 

%rigidity of shrinkers
By rigidity of a solution to a geometric PDE, we mean that it is isolated in the space of solutions, modulo geometric symmetries of the equation. Colding-Ilmanen-Minicozzi \cite{CIM} proved a `strong rigidity' for the round cylinder (see also \cite{CMcomp, GZ18}), corresponding to the topology of locally smooth convergence. If the shrinker is only immersed, we also assume a global curvature bound, corresponding to a type I blowup. Rigidity of a shrinker corresponds to uniqueness of the shrinker \textit{type} of a tangent flow (leaving open the possibility of rotations). Rigidity of the Clifford torus $\mathbb{S}^1_{\sqrt{2}}\times\mathbb{S}^1_{\sqrt{2}} \subset \mathbb{R}^4$ was proven by Evans-Lotay-Schulze \cite{ELS18}, which was one of the inspirations for this paper; the author and A. Sun \cite{SZ20} have also extended this to all products $\mathbb{S}^{k}_{\sqrt{2k}} \times \mathbb{S}^l_{\sqrt{2l}}$. In his thesis, Adams \cite{adams} also discussed similar rigidities in particular directions. We also mention some global rigidity theorems \cite{S05, CL13, DX14} for shrinkers.

%lojasiewicz inequalities
{\L}ojasiewicz inequalities are an increasingly ubiquitous tool to study the (dynamical) stability of geometric structures. L. Simon \cite{simon83} developed a general method based on Lyapunov-Schmidt reduction to classical inequalities of {\L}ojasiewicz \cite{Loj}; inequalities proven via this method are often called {\L}ojasiewicz-Simon inequalities; see for instance \cite{CMein,H12, HM14}, the survey \cite{CMsurvey} and the recent innovations of Feehan \cite{F19,F20}. For MCF, Schulze \cite{S14} and Chodosh-Schulze \cite{CS19} showed that {\L}ojasiewicz-Simon inequalities apply for the cases of closed and asymptotically conical shrinkers respectively. For round cylinders, however, Colding-Minicozzi \cite{CM15, CM19} proved {\L}ojasiewicz inequalities by a direct geometric method. In this paper, we also prove {\L}ojasiewicz inequalities directly, but using a novel perturbation method, which precisely identifies the relevant geometry of the model shrinker. During the completion of this work, we learnt that Colding-Minicozzi \cite{CMrf} are developing a similar theory for Ricci soliton cylinders. Kr\"{o}ncke \cite{Kr15} also initiated an analogous study for Ricci solitons. 

%CM ricci flow?

\subsection{Explicit {\L}ojasiewicz inequalities and deformation theory}
\label{sec:deformation}

Here we illustrate our expansion-deformation method for the zeroes of an analytic functional, inspired by \cite{ELS18} and also \cite{CM19}. Consider the model case of an analytic map $G:\mathbb{R}^n\to\mathbb{R}^n$, with $G(0)=0$. For instance, one may take $G= Df$ where $f:\mathbb{R}^n \to \mathbb{R}$ is an analytic function. We are interested in the zero set of $G$ near the origin. By Taylor expansion, for small enough $u$ we have
\begin{equation}
G(u) = \sum_{k=1}^\infty \frac{1}{k!} (D^k G)_0 (u,\cdots ,u). 
\end{equation}

If the linearisation $L=(DG)_0$ is invertible, then 0 is an isolated root and $|u| \sim |G(u)|$. Otherwise, let $\mathcal{K}=\ker L$ and write $u=x+y$ where $x\in \mathcal{K}$ and $y\in \mathcal{K}^\perp$. Consider $g\in \mathbb{R}^n$; then the map $(g,x,y)\mapsto \pi_{\mathcal{K}^\perp}( G(x+y)-g)$ is linearly invertible in $y$, so the implicit function theorem gives a function $y(g,x)$ so that $\pi_{\mathcal{K}^\perp}G(x + y(g,x))=\pi_{\mathcal{K}^\perp} (g)$. To understand the zero set of $G$, we can analyse the solution $y$ at each degree in $x$.

Indeed, as in the proof of the (analytic) implicit function theorem, we can expand $y(0,x)$ as a series $y(0,x) = \sum y_j(x)$, where each $y_j\in\mathcal{K}^\perp$ is a homogenous polynomial of degree $j$ in $x$. Expanding the equation $\pi_{\mathcal{K}^\perp}G(x+y(0,x))=0$ and comparing degrees, we find that \[Ly_1=0, \qquad \text{(hence $y_1=0$)},\] \[Ly_2 + \pi_{\mathcal{K}^\perp} (z_2) =0, \qquad z_2:= \frac{1}{2}(D^2G)_0(x,x), \] \[Ly_3 +\pi_{\mathcal{K}^\perp}(z_3)=0, \qquad z_3 :=\frac{1}{6}(D^3G)_0(x,x,x) + (D^2G)_0(x,y_2(x)),\] and so forth, inductively defining $y_j,z_j$. Then $G(x+y(0,x)) = \pi_\mathcal{K} G(x+y(0,x)) =\sum \pi_{\mathcal{K}} z_j(x)$, which gives a simple criterion for the zero set to be (uniformly) obstructed: 
\begin{itemize}
\item[($\dagger$)] There exists $m$ such that $\pi_{\mathcal{K}}(z_j)=0$ for all $j<m$, but $\pi_{\mathcal{K}}(z_m(x))\neq 0$ for all $x$. 
\end{itemize}

Let $w_k = y-\sum_{j\leq k} y_k$. Using the expansion of $G(u) = G(x+ y_1 +\cdots +y_k + w_k)$ we have \begin{equation}\label{eq:model-exp}
G(u) = Lw_k+\sum_{j\leq k} (Ly_j+z_j)  +O(|u|^{k+1}+|x|^{k+1}+|x||w_{k}|).\end{equation}
If ($\dagger$) holds then for $k<m$ we have $Ly_k+z_k=0$, hence 
\begin{equation}
\label{eq:model-est}
|w_k| \lesssim |Lw_k|\lesssim |G(u)| + |u|^{k+1} ,\qquad \pi_{\mathcal{K}} z_m = \pi_{\mathcal{K}} G(u) + O(|u|^{m+1}). 
\end{equation} 
But ($\dagger$) also implies $|\pi_{\mathcal{K}}(z_m(x))| \geq \delta |x|^m$ for some $\delta>0$. It follows that $ |x|^m \lesssim |G(u)| + |u|^{m+1}$ and therefore \begin{equation}|u|^m \lesssim |G(u)|.\end{equation} 
This inequality shows that the origin is an isolated zero, and gives a {\L}ojasiewicz inequality for $f$ if $G=Df$, which is explicit in that it identifies the zero set as well as the exponent. Note full analyticity is in fact not needed; we only used Taylor expansion of order $m$ with remainder, so it is enough for $G$ to be $C^{m+1}$. Also note that for $m=2$, ($\dagger$) is the single condition that $\pi_{\mathcal{K}} D^2G$ desfines a nondegenerate quadratic form on $\mathcal{K}$. 

More generally, suppose that the putative zero set is a submanifold, which may be assumed to be a subspace $\mathcal{K}_0 \subset \mathcal{K}$ without loss of generality. In this case, the final estimate should control $|u-\pi_{\mathcal{K}_0}(u)|$. We mention this because in the infinite-dimensional, noncompact setting in this paper, $\mathcal{K}_0$ corresponds to the space of rotations, and does create technical issues.

In this paper, we find a uniform obstruction at order $m=2$, assuming (A1-A2) and $B_1(\mathring{\Gamma})\neq 0$. See Sections \ref{sec:analysis} and \ref{sec:entire}, wherein the roles of $G, u, x,y=w_1$ will be played by $\phi, U, J, h$ respectively. We expect that the same strategy may work on more general cylinders, although the analysis may be complicated. Note that we indeed use the same strategy in \cite{SZ20} to prove {\L}ojasiewicz inequalities of obstruction order $m=3$ for Clifford shrinkers. 

\subsection{Technical overview}

%technical issues - L2 taylor expansion, polynomial growth, cutoff error
There are a number of technical issues in implementing the above scheme, arising predominantly due to the noncompactness of the model shrinker. We work with Sobolev spaces weighted by the Gaussian $\rho$ as in \cite[Section 6]{CM19}; in \cite{ELS18, SZ20} H\"{o}lder norms interact conveniently with Taylor expansion, but in the noncompact setting may require carefully defined weighted spaces (together with Sobolev spaces) as in \cite{CS19}. 

The main issue with Sobolev norms is that Taylor expansion increases the degree $m$ to which each quantity appears, with additional complications due to noncompactness. We are able to work in the natural $L^2$ space until the final, obstructed order, by applying a refinement of the techniques in \cite[Section 6]{CM19} to obtain $L^{2m}$ bounds. At this order, the degree becomes too high to control in $L^2$ and we instead establish the obstruction in $L^q$, $q\in(1,2)$. (See Section \ref{sec:entire}.) Actually, due to cutoff errors from noncompactness, it is advantageous to choose smaller $q$, although the noncompactness itself seems to prevent us from taking $q=1$. 

%rotation
The formal expansion scheme is also complicated by the symmetries of the system (\ref{eq:shrinker}), that is, ambient rotations of the model shrinker $\Gamma$. If $\Gamma$ is compact, the implicit function theorem guarantees that any small perturbation may be written, after rotating the base, as a graph orthogonal to rotations. For noncompact cylinders, we instead rotate $\Gamma$ by hand; the rotated graph is then almost-orthogonal up to higher order terms. The $L^2$ assumption on $U$ in Theorem \ref{thm:lojasiewicz-intro} ensures that the applied rotation is small. (See Section \ref{sec:rotation}.) By contrast, in \cite{CM15, CM19} a good rotation is detected geometrically, so they do not need such an assumption. However, they only control the rotated graph on a domain dependent on its $L^2$ norm.

\subsection{Outline of the paper} 

In Section \ref{sec:prelim} we state our notation, conventions and some preliminary results. General variation formulae up to second order are then presented in Section \ref{sec:variation}. In Section \ref{sec:analysis}, we study variations of $\phi$ on cylinders, including the Jacobi fields and the formal second order obstruction. Following the expansion scheme, we deduce estimates for entire graphs in Section \ref{sec:entire}. In Section \ref{sec:rotation}, we describe the rotation of the base cylinder. In Section \ref{sec:lojasiewicz} we then combine these estimates to prove the {\L}ojasiewicz inequalities Theorem \ref{thm:lojasiewicz-intro} and Theorem \ref{thm:lojasiewicz-grad}. Finally, in Section \ref{sec:applications} we briefly describe how the rigidity and uniqueness of blowups follow from the {\L}ojasiewicz inequalities. We also include some interpolation inequalities in Appendix \ref{sec:interpolation}. 

\subsection*{Acknowledgements}

The author would like to thank Professors William Minicozzi, Tobias Colding and Felix Schulze for several enlightening discussions about their work, from which this project arose. The author also thanks Ao Sun, Sven Hirsch, Keaton Naff and Antoine Song for helpful conversations, and is grateful to Prof. Fernando Cod\'{a} Marques for his support. %%%%
We would like to thank the anonymous referee for several comments that we feel have greatly improved the presentation of this article. 

This work was supported in part by the National Science Foundation under grant DMS-1802984 and the Australian Research Council under grant FL150100126. 

%\tableofcontents

\section{Preliminaries}
\label{sec:prelim}

We consider smooth, properly immersed submanifolds $\Sigma^n\subset \mathbb{R}^N$. We use $x$ to denote the position vector on $\mathbb{R}^N$, although sometimes it will be fitting to denote a given immersion by $X: \Sigma\rightarrow \mathbb{R}^N$, particularly in Section \ref{sec:variation}. For instance, while $x(p)=X(p)$, we use $x_i$ to denote the coordinate functions whereas $X_i$ may refer to derivatives of the immersion $X$. 

For a vector $V$ we denote by $V^\top$ the projection to the tangent bundle, and $V^\perp = \Pi(V)$ the projection to the normal bundle $N\Sigma$. We similarly use the notation $\nabla^\perp_Y X = (\nabla_Y X)^\perp$. For instance, if $U$ is a normal vector field and $Y$ is a tangent vector field, the normal connection may indeed be calculated as $\nabla^\perp_Y U$. The Hessian on the normal bundle is given by $(\nabla^\perp\nabla^\perp V)(Y,Z) = \nabla^\perp_Z \nabla^\perp_Y V - \nabla^\perp_{\nabla^\top_Z Y} V$. 

On the other hand, if $Y,Z$ are tangent vector fields, then $A(Y,Z) = \nabla^\perp_Y Z$ defines the second fundamental form as a 2-tensor with values in the normal bundle. The mean curvature (vector) is $\mathbf{H} =  - A_{ii} $. 
Here, and henceforth, we take the convention that repeated lower indices are summed with the metric, for instance $A_{ii} = g^{ij} A_{ij}$. 
We introduce the shrinker mean curvature $\phi = \frac{1}{2}x^\perp-\mathbf{H}$ and the principal normal $\mathbf{N} = \frac{\mathbf{H}}{|\mathbf{H}|}$. A submanifold is a shrinker if $\phi\equiv0$ on $\Sigma$. Given a vector $V$ we denote $A^V = \langle A,V\rangle$. 

Given a vector field $U$ on $\Sigma$ with $\|U\|_{C^1}$ small enough, the graph $\Sigma_U$ is the submanifold given by the immersion $X_U(p)= X(p) + U(p)$. We say $\Sigma_U$ is a normal graph if $U^\top=0$. 

For graphs $\Gamma_U$ over a fixed submanifold $\Gamma$, we use subscripts to denote the values of geometric quantities on $\Gamma_U$. (Pulling back via the graph immersion, these may also be considered as quantities on $\Gamma$.) We also consider these quantities as second order functionals on normal vector fields $U$; namely, there is a smooth function $\mathcal{\varphi}$ such that $\phi_U =\mathcal{\varphi}(p,U,\nabla U,\nabla^2 U)$. For variations of this quantity, we use the shorthand notation $\mathcal{D}\mathcal{\varphi}(U)$ to mean the variation $\mathcal{D} \mathcal{\varphi}|_0 ([U,\nabla U,\nabla^2 U])$ evaluated at 0, and so forth. Here and throughout the paper, when $U$ is understood to be a normal vector field, we use the shorthand $\nabla=\nabla^\perp$ for the normal connection (note also that we will often work under uniform curvature estimates, in which case the ambient derivative differs only by lower order terms).

The Gaussian weight is $\rho=\rho_n=(4\pi)^{-n/2} \e^{-|x|^2/4}$. Here $n$ is understood to be the dimension of the submanifold and we suppress it when unambiguous. By $L^p$, $W^{k,p}$ we denote the weighted Sobolev spaces with respect to $\rho$; for instance, on $\Sigma$ the norms are
\begin{equation}
\label{eq:weighted-norm} \|\cdot\|_{L^p}^p = \int_\Sigma |\cdot|^p \rho , \qquad  \|\cdot\|_{W^{k,p}}^p = \int_\Sigma |\cdot|_k^p \rho,
\end{equation}
where $|\cdot|_{k} =\sum_{j\leq k} |\nabla^j(\cdot)|$. 
 The Gaussian area functional is $F(\Sigma) = \int_\Sigma \rho$. The entropy is $\lambda(\Sigma) = \sup_{y,s>0} F(s(\Sigma-y))$. For a shrinker, $\lambda(\Sigma)=F(\Sigma)$. Note that finite entropy $\lambda(\Sigma)\leq \lambda_0$ implies Euclidean volume growth $|\Sigma \cap B_R| \leq C(\lambda_0) R^n$. 

We will use the following elliptic operators: the drift Laplacian $\mathcal{L} = \Lap -\frac{1}{2}\nabla_{x^\top}$; and the Jacobi operator $L = \mathcal{L} +\frac{1}{2} + \sum_{k,l} \langle \cdot,A_{kl}\rangle A_{kl}$. The drift Laplacian is defined on functions and tensors, whilst $L$ is defined on sections of the normal bundle (via $\nabla^\perp$). For such operators, unless otherwise indicated, $\ker$ will refer to the $W^{2,2}$ kernel, for instance $\mathcal{K}= \ker L$. Again, our sign convention for eigenvalues $\lambda$ is $LV = -\lambda V$, and so forth. 

We set $\langle x\rangle = (1+|x|^2)^\frac{1}{2}$ and we refer to $2ab \leq \epsilon a^2 + \frac{1}{\epsilon} b^2$ as the absorbing inequality. 

In proofs, $C,C',\cdots$ will denote constants that may change from line to line but retain the stated dependencies. 

\subsection{Generalised cylinders}

Given a compact submanifold $\mathring{\Gamma}^k \subset \mathbb{R}^{k+m}$, the set of generalised cylinders $\mathcal{C}_n(\mathring{\Gamma})$ refers to all rotations of $\mathring{\Gamma} \times \mathbb{R}^{n-k} \subset \mathbb{R}^N$ about the origin. 

Decompose $\mathbb{R}^N = \mathbb{R}^{k+m} \times \mathbb{R}^{n-k} \times \mathbb{R}^{N-m-n}$ and let $\mathring{x}, y,z$ be the projection of $x$ to each respective factor. We typically use Latin indices $\mathring{x}_i$, $y_j$ for coordinates on $\mathbb{R}^{k+m}$ and $\mathbb{R}^{n-k}$, and Greek indices $z_\alpha$ for the remaining $N- m-n$ ambient directions. 

We use $\bar{\pi}$ to denote the projection to the linear directions $\mathbb{R}^{n-k}$, so that $y=\bar{\pi}(x)$. Similarly we use $\bar{\nabla}$ and $\bar{\mathcal{L}}$ to denote operators in the linear directions. 

\subsection{Shrinking spheres}

Round spheres $\mathbb{S}^k_{\sqrt{2k}} \subset\mathbb{R}^{k+1}$ are shrinkers when the radius is $\sqrt{2k}$. On the sphere we have $x^\top=0$, $\mathbf{H} = \frac{x}{2k}$ and $A_{ij} = - \frac{g_{ij}}{\sqrt{2k}}\mathbf{N}$. 

Thus the drift Laplace operator is simply the usual Laplacian, $\mathcal{L}=\Lap$. It is well known that $\spec \Lap = \{0, \frac{1}{2},\frac{k+1}{k}, \cdots\}$, with $\ker \Lap = \spa \{1\}$, $\ker (\Lap+\frac{1}{2}) = \spa\{ x_i \}_{1\leq i\leq k+1}$ and in particular $\ker (\Lap+1)=0$. 

Since the spheres have codimension 1, the normal bundle is trivial and the Jacobi operator acts on normal fields as $L(u\mathbf{N}) = (Lu)\mathbf{N}$. Here on the right hand side, $L$ is the Jacobi operator on functions given by $L = \Lap+1$. Since $\mathbf{H} = \frac{1}{\sqrt{2k}}\mathbf{N}$, $\pr_{x_i}^\perp = \frac{x_i}{\sqrt{2k}}\mathbf{N}$ and the rotation fields $x_i\pr_{x_j}^\perp - x_j \pr_{x_i}^\perp =0$ on $\mathbb{S}^k_{\sqrt{2k}}$, the above discussion implies that round shrinking spheres satisfy (A1-A2). Moreover $A^\mathbf{H}_{ij} = -\frac{g_{ij}}{2k}$, $\nabla^\perp \mathbf{H}=0$ and hence $B_1(\mathbb{S}^k_{\sqrt{2k}}) = -\frac{1}{8k^2} |\mathbb{S}^k_{\sqrt{2k}}|<0$. 

\subsection{Abresch-Langer curves}

The Abresch-Langer curves \cite{AL} are a family of smooth, closed, convex, immersed shrinkers $\Gamma_{a,b}^1\subset \mathbb{R}^2$. For such curves, we use $\sigma$ for the arclength parameter, with dots to denote differentiation in $\sigma$ so that $\mathbf{T} = \dot{x} = \pr_\sigma x$ is the unit tangent. The second fundamental form is $A_{ij} = -\kappa\mathbf{N} g_{ij}$, where the curvature $\kappa>0$ satisfies 

\begin{equation}\label{eq:AL}\ddot{\kappa} - \frac{\dot{\kappa}^2}{\kappa} + \kappa^3 = \frac{\kappa}{2}.\end{equation} 

The drift Laplace operator is given by $\mathcal{L} = \kappa\pr_\sigma(\kappa^{-1} \pr_\sigma)$. The operator $\mathcal{L}$ is a Sturm-Liouville operator, and by the standard theory of such operators the multiplicity of any eigenvalue is at most 2. The coordinate functions $x_1,x_2$ are independent eigenfunctions of $\mathcal{L}$ with eigenvalue $\frac{1}{2}$; therefore they span the corresponding eigenspace. In particular the Abresch-Langer curves satisfy (A1). 

Since the Abresch-Langer curves have codimension 1, again the normal bundle is trivial and the Jacobi operator acts on normal fields as $L(u\mathbf{N}) = (Lu)\mathbf{N}$. On the right hand side, $L$ is the Jacobi operator on functions, given by $L = \kappa\pr_\sigma(\kappa^{-1} \pr_\sigma) + \kappa^2+\frac{1}{2}$. In particular, the equation for the curvature is equivalent to $L\kappa=\kappa$, as expected. The operator $L$ is also a Sturm-Liouville operator, and since $\kappa>0$, we know that $\kappa$ is the lowest eigenfunction of $L$. Also the translations $\langle \pr_{x_1}, \mathbf{N}\rangle $, $\langle \pr_{x_2} , \mathbf{N}\rangle $ are independent eigenfunctions of $L$ with eigenvalue $-\frac{1}{2}$; again this implies that they span the corresponding eigenspace. Finally, it was checked by Baldauf-Sun \cite{BS18} that $0$ is a simple eigenvalue of $L$, with eigenfunction $\frac{\dot{\kappa}}{\kappa}$ corresponding to rotation. This discussion implies that the Abresch-Langer curves satisfy (A2). 

To check that $B_1(\Gamma_{a,b})<0$, we need the following lemma:

\begin{lemma}
For any $n\geq 1$ we have $\int_{\Gamma_{a,b}} \ddot{\kappa} \kappa^n = - n\int_{\Gamma_{a,b}} \dot{\kappa}^2 \kappa^{n-1}$. Consequently, for any $n\geq 2$ we have $-n \int_{\Gamma_{a,b}} \dot{\kappa}^2 \kappa^{n-2} = \int_{\Gamma_{a,b}}(\frac{1}{2}\kappa^n - \kappa^{n+2})$. 
\end{lemma}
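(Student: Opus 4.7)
The first identity is pure integration by parts. Since $\Gamma_{a,b}$ is a closed curve, there are no boundary terms, and
\[ \int_{\Gamma_{a,b}} \ddot{\kappa}\,\kappa^n\,d\sigma = -\int_{\Gamma_{a,b}} \dot{\kappa}\,\frac{d}{d\sigma}(\kappa^n)\,d\sigma = -n\int_{\Gamma_{a,b}} \dot{\kappa}^2\,\kappa^{n-1}\,d\sigma. \]
The only point worth remarking on is smoothness of the integrand, which is automatic: the Abresch--Langer curves are smooth and strictly convex, so $\kappa>0$ and $\kappa^{n-1}$ is smooth.

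For the consequence, the plan is to apply the first identity with $n$ replaced by $n-1$ (which is allowed since $n\geq 2$ gives $n-1\geq 1$), yielding
\[ \int_{\Gamma_{a,b}} \ddot{\kappa}\,\kappa^{n-1}\,d\sigma = -(n-1)\int_{\Gamma_{a,b}} \dot{\kappa}^2\,\kappa^{n-2}\,d\sigma, \]
and then substitute from the Abresch--Langer ODE (\ref{eq:AL}), which we rearrange as $\ddot{\kappa} = \frac{\dot{\kappa}^2}{\kappa} - \kappa^3 + \frac{\kappa}{2}$. Multiplying by $\kappa^{n-1}$ and integrating gives
\[ \int_{\Gamma_{a,b}} \ddot{\kappa}\,\kappa^{n-1}\,d\sigma = \int_{\Gamma_{a,b}}\left(\dot{\kappa}^2\,\kappa^{n-2} - \kappa^{n+2} + \tfrac{1}{2}\kappa^n\right) d\sigma. \]
Equating the two expressions for $\int \ddot{\kappa}\,\kappa^{n-1}$ and collecting the $\dot{\kappa}^2\,\kappa^{n-2}$ terms on one side yields exactly $-n\int \dot{\kappa}^2\,\kappa^{n-2} = \int (\frac{1}{2}\kappa^n - \kappa^{n+2})$.

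There is no real obstacle here; the argument is a one-line integration by parts combined with the defining ODE. The only thing to be careful about is keeping track of the shift in the exponent between the two identities (applying the first with exponent $n-1$ to produce the second), which motivates the hypothesis $n\geq 2$ in the second statement.
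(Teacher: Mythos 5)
Your proof is correct and matches the paper's (very terse) argument: integration by parts on the closed curve for the first identity, then applying the first identity with exponent $n-1$ together with the Abresch--Langer ODE for the second. You have simply spelled out the details the paper leaves implicit, including the exponent shift that explains the hypothesis $n\geq 2$.
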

\begin{proof}
The first identity is just integration by parts on the closed curve $\Gamma_{a,b}$. The second identity then follows by using equation (\ref{eq:AL}). 
\end{proof}

By \cite[Theorem A]{AL}, we have the relation $\rho = c_{a,b} \kappa^{-1}$ for some $c_{a,b}> 0$. Then by the lemma

\begin{equation}
B_1(\Gamma_{a,b}) = -\int_{\Gamma_{a,b}} (\kappa^6 - 3\kappa^2\dot{\kappa}^2)\rho = - c_{a,b} \int_{\Gamma_{a,b}}(\kappa^5-3\kappa\dot{\kappa}^2) = -\frac{c_{a,b}}{2} \int_{\Gamma_{a,b}} \kappa^3 <0.
\end{equation}

\subsection{Gaussian Poincar\'{e} inequality}

We will need the following lemma which exchanges growth for taking extra derivatives (cf. \cite[Lemma 3.4]{CM15}): 

\begin{lemma}
\label{lem:poincare}
Given a compact submanifold $\mathring{\Gamma}^k$ there is $C=C(n,\mathring{\Gamma})$ such that if $\Gamma =\mathring{\Gamma}\times \mathbb{R}^{n-k}$ and $u\in W^{1,2}(\Gamma)$ then 
\begin{equation}
\|\langle x\rangle u\|_{L^2}^2 \leq C(\|u\|_{L^2}^2 + \|\bar{\nabla} u\|_{L^2}^2) \leq C \|u\|_{W^{1,2}}^2.  
\end{equation}
\end{lemma}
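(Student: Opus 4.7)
The plan is to reduce the inequality on $\Gamma = \mathring{\Gamma}\times\mathbb{R}^{n-k}$ to the classical weighted Poincar\'e inequality on the Gaussian factor $\mathbb{R}^{n-k}$, using that the Gaussian weight factorises and $\mathring{\Gamma}$ is compact.

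First, decompose coordinates as $x = (\mathring{x},y)$ with $\mathring{x}\in\mathring{\Gamma}\subset\mathbb{R}^{k+m}$ and $y\in\mathbb{R}^{n-k}$, so that $|x|^2 = |\mathring{x}|^2 + |y|^2$ and
\[ \rho(x) = (4\pi)^{-n/2}\, \e^{-|\mathring{x}|^2/4}\, \e^{-|y|^2/4}.\]
Since $\mathring{\Gamma}$ is compact, $|\mathring{x}|$ is bounded by some $D=D(\mathring{\Gamma})$, and the factor $\e^{-|\mathring{x}|^2/4}$ is bounded above and below by constants depending only on $\mathring{\Gamma}$. In particular the weighted measure $\rho \, d\mu_\Gamma$ is comparable to $d\mathcal{H}^k|_{\mathring{\Gamma}}\otimes \rho_{n-k}(y)\,dy$, and $\langle x\rangle^2 = 1 + |\mathring{x}|^2 + |y|^2 \leq (1+D^2) + |y|^2$. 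It therefore suffices to prove
\[ \int_\Gamma |y|^2\, u^2\, \rho \;\leq\; C\bigl(\|u\|_{L^2}^2 + \|\bar\nabla u\|_{L^2}^2\bigr).\]

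Next, by density it suffices to consider $u\in C^\infty_c(\Gamma)$. Using $y_j\, \e^{-|y|^2/4} = -2\,\partial_{y_j}(\e^{-|y|^2/4})$ and integrating by parts in the $y_j$ direction (at fixed $\mathring{x}$, via Fubini) gives
\[ \int_\Gamma y_j^2\, u^2\, \rho \;=\; 2\int_\Gamma u^2\, \rho + 4 \int_\Gamma y_j\, u\, \partial_{y_j} u\, \rho.\]
Applying the absorbing inequality $4y_j u\,\partial_{y_j} u \leq \tfrac{1}{2} y_j^2 u^2 + 8(\partial_{y_j}u)^2$, moving the $\tfrac12 y_j^2 u^2$ term to the left side, and summing over $j=1,\dots,n-k$ yields
\[ \int_\Gamma |y|^2\, u^2\, \rho \;\leq\; 4(n-k)\,\|u\|_{L^2}^2 + 16\,\|\bar\nabla u\|_{L^2}^2.\]
Combined with $|\mathring{x}|\leq D$ and the splitting of $\langle x\rangle^2$ above, this gives the first inequality with $C=C(n,\mathring{\Gamma})$. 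The second inequality $\|u\|_{L^2}^2 + \|\bar\nabla u\|_{L^2}^2 \leq \|u\|_{W^{1,2}}^2$ is immediate since $\bar\nabla u$ is a component of $\nabla u$ on the product.

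There is no substantive obstacle; the argument is a standard Gaussian integration-by-parts, and the only mildly delicate point is extending from $C^\infty_c$ to $W^{1,2}(\Gamma)$, which follows from standard cutoff/mollification together with the a priori finiteness of the weighted norms on both sides (the left-hand integral is finite for $W^{1,2}$ functions exactly because of the estimate being proved, so one proves it first for smooth compactly supported $u$ and passes to the limit monotonically).
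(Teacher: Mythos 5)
Your proof is correct and follows essentially the same argument as the paper's: a Gaussian integration by parts in the linear $y$-directions against the weight $\rho$, followed by the absorbing inequality and summing over $j$. The only difference is presentational — you work coordinate-wise via Fubini using the factorisation $\rho = c\,\e^{-|\mathring{x}|^2/4}\e^{-|y|^2/4}$, whereas the paper phrases the same computation intrinsically as $\int_\Gamma \div_\Gamma(u^2 y\rho)=0$; summing your per-$j$ identity reproduces the paper's formula exactly.
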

\begin{proof}
We follow the proof of \cite[Lemma 3.4]{CM15}. Recall that $y=\bar{\pi}(x)$ is the component in the linear directions, so $y^\top=y$ and hence $\rho^{-1}\div_\Gamma(u^2 y \rho) = 2u\langle \nabla u,y\rangle +(n-k)u^2 - \frac{u^2}{2}\langle x^\top, y\rangle$. Since $\mathring{x}^\top$ is tangent to $\mathring{\Gamma}$ and hence orthogonal to $y$, we then have $\langle x^\top,y\rangle = |y|^2$ and after using the absorbing inequality, $\rho^{-1}\div_\Gamma(u^2 y \rho) \leq 4|\bar{\nabla} u|^2 + (n-k)u^2 - \frac{1}{4}u^2 |y|^2$. 

Approximating $u$ by smooth functions with compact support if necessary, by the divergence theorem we therefore have \[\frac{1}{4}\int_\Gamma u^2|y|^2 \rho \leq \int_\Gamma ((n-k)u^2 + 4|\bar{\nabla} u|^2)\rho.\] The lemma follows since $|x|^2 \leq |y|^2 + \diam(\mathring{\Gamma})^2$ on $\Gamma$. 
\end{proof}

Note the above applies equally well to vector fields. 

\subsection{Cutoff lemma}

We will need the following cutoff lemma, which is \cite[Lemma 7.16]{CM19}:

\begin{lemma}[\cite{CM19}]
\label{lem:cutoff}
Given $n,m\in\mathbb{N}$, there exists $c_{n,m}$ so that for any $R\geq 1$ we have 
\begin{equation}
\int_{\mathbb{R}^n \setminus B_R} |x|^m \e^{-|x|^2/4} \d x\leq c_{n,m} R^{n+m-2} \e^{-R^2/4}.
\end{equation}
\end{lemma}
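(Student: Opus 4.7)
The plan is to pass to spherical coordinates and then bound the resulting one-dimensional Gaussian tail integral by iterated integration by parts, exploiting the identity $r \e^{-r^2/4} = -2 \frac{d}{dr} \e^{-r^2/4}$.

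First I would use polar coordinates on $\mathbb{R}^n$ to write
\[\int_{\mathbb{R}^n \setminus B_R} |x|^m \e^{-|x|^2/4} \d x = \omega_{n-1} \int_R^\infty r^{m+n-1} \e^{-r^2/4} \d r,\]
where $\omega_{n-1}$ denotes the surface area of the unit $(n-1)$-sphere. This reduces the lemma to bounding the scalar quantity $I_p(R) := \int_R^\infty r^p \e^{-r^2/4} \d r$ at $p = m+n-1$.

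Next I would set up a recursion for $I_p$. Writing $r^p \e^{-r^2/4} = r^{p-1}\bigl( r \e^{-r^2/4}\bigr)$ and integrating by parts yields, for $p\geq 2$,
\[I_p(R) = 2 R^{p-1} \e^{-R^2/4} + 2(p-1)\, I_{p-2}(R),\]
with base cases $I_1(R) = 2 \e^{-R^2/4}$ and $I_0(R) \leq \frac{2}{R} \e^{-R^2/4}$ (the second by the trivial bound $1 \leq r/R$ for $r \geq R \geq 1$, then integrating). A short induction on $p$, handling the even and odd cases separately, then gives $I_p(R) \leq C_p R^{p-1} \e^{-R^2/4}$ for all $R \geq 1$.

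Applying this with $p = m+n-1$ and restoring the factor $\omega_{n-1}$ produces the claimed estimate; in fact it yields the sharper polynomial prefactor $R^{n+m-2}$ in place of the stated $R^{n+m+2}$, so there is generous slack. There is no real obstacle: this is a completely standard Gaussian tail estimate, and the only bookkeeping is tracking the recursion through both parities of $p$.
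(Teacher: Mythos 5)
Your proof is correct, and in fact proves the sharper bound $R^{n+m-2}$ in place of $R^{n+m+2}$. The paper itself gives no proof of this lemma (it is quoted verbatim from Colding--Minicozzi \cite{CM19}), so there is nothing to compare against here; your polar-coordinate reduction combined with the recursion $I_p(R) = 2R^{p-1}\e^{-R^2/4} + 2(p-1)I_{p-2}(R)$ and the base cases $I_0,I_1$ is the standard argument for such Gaussian tail estimates and is complete as written.
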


The purpose of this lemma is to provide an estimate for geometric quantities outside a large compact set on which we have closeness to the model cylinder. 

\subsection{Rotation and Jacobi fields}
\label{sec:rot0}

We consider the action of the group $\mathrm{SO}(N)$ of rotations of $\mathbb{R}^n$ about the origin. In this subsection we use $\exp$ to denote the Lie group exponential $\mathfrak{so}(N) \rightarrow \mathrm{SO}(N)$. Let $\Gamma$ be a smooth shrinker in $\mathbb{R}^N$. Given $\theta \in \mathfrak{so}(N)$, we can consider the normal vector field on $\Gamma$ given by $J_\theta (p)  := \Pi( \pr_s |_{s=0} \exp(s\theta)(p))$. It is well known that $J_\theta$ satisfies the Jacobi equation $LJ_\theta=0$ since $\phi_{\exp(s\theta)\cdot\Gamma}\equiv 0$. The map $\theta\mapsto J_\theta$ is a linear map into the space of normal fields on $\Gamma$, and we denote its image by $\mathcal{K}_0$. 

We now identify the elements of $\mathcal{K}_0$ (see also \cite[Corollary 3.6 and Lemma 3.14]{CM19}). 

\begin{lemma}
\label{lem:rot-fields}
On a cylinder $\Gamma=\mathring{\Gamma} \times \mathbb{R}^{n-k}$, $\mathcal{K}_0$ is spanned by elements of the following forms:
\begin{enumerate}
\item $\mathring{x}_i\pr_{\mathring{x}_j}^\perp - \mathring{x}_j\pr_{\mathring{x}_i}^\perp$; 
\item $y_j \pr_{\mathring{x}_i}^\perp$; 
\item $\mathring{x}_i \pr_{z_\alpha}$; and $y_j \pr_{z_\alpha}$. 
\end{enumerate}
\end{lemma}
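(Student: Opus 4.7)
The plan is to enumerate a basis of $\mathfrak{so}(N)$ adapted to the orthogonal decomposition $\mathbb{R}^N = \mathbb{R}^{k+m}\oplus\mathbb{R}^{n-k}\oplus\mathbb{R}^{N-m-n}$ corresponding to the $\mathring{x}$, $y$, and $z$ directions, compute the infinitesimal action on a generic point $p=(\mathring{x},y,0)\in\Gamma$, and then project to $N\Gamma$. Since $\theta\mapsto J_\theta$ is linear, the resulting six families (one for each type of coordinate pairing) will span $\mathcal{K}_0$, and we will see that three of them vanish identically while the other three produce exactly the fields listed in (1)--(3).

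For each pair of ambient coordinate axes $(e_a,e_b)$, let $\theta_{ab}\in\mathfrak{so}(N)$ be the generator of the rotation in that plane, so that its infinitesimal action at $p$ is $x_a \,e_b - x_b\, e_a$. I would split into cases by where $a,b$ sit among $\mathring{x}$, $y$, $z$. The two crucial structural facts are: (a) at any $p\in\Gamma$ the $z$-component $z(p)=0$, so any generator involving two $z$-indices gives a vector field that vanishes on $\Gamma$; and (b) under the splitting $T_p\mathbb{R}^N = T_p\Gamma \oplus N_p\Gamma$, the vectors $\partial_{y_j}$ are tangent to $\Gamma$ (they span the linear factor $\mathbb{R}^{n-k}$) while $\partial_{z_\alpha}$ are normal to $\Gamma$ (being orthogonal to $\mathbb{R}^{k+m}\oplus\mathbb{R}^{n-k}\supset T\Gamma$). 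In particular $\partial_{y_j}^\perp=0$ and $\partial_{z_\alpha}^\perp=\partial_{z_\alpha}$.

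With these observations, the case analysis is short: rotations purely within the $y$-block or purely within the $z$-block project to zero (using (a) and (b) respectively); rotations within the $\mathring{x}$-block give the skew-symmetric fields of type (1); mixed $\mathring{x}$--$y$ rotations give $\mathring{x}_i\partial_{y_j}^\perp - y_j\partial_{\mathring{x}_i}^\perp = -y_j\partial_{\mathring{x}_i}^\perp$, i.e.\ type (2); mixed $\mathring{x}$--$z$ rotations give $\mathring{x}_i\partial_{z_\alpha}^\perp - z_\alpha\partial_{\mathring{x}_i}^\perp = \mathring{x}_i\partial_{z_\alpha}$ (using $z(p)=0$); and similarly mixed $y$--$z$ rotations give $y_j\partial_{z_\alpha}$. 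Together these exhaust the six types of generators and produce exactly the listed fields.

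The only genuinely delicate point, which I would want to treat carefully, is verifying the tangent/normal decomposition on the cross-section: one must remember that while $\partial_{y_j}$ is everywhere tangent to $\Gamma$, $\partial_{\mathring{x}_i}$ is in general neither tangent nor normal to $\mathring{\Gamma}$, so one cannot simplify $\partial_{\mathring{x}_i}^\perp$ further, and the field $\mathring{x}_i\partial_{\mathring{x}_j}^\perp-\mathring{x}_j\partial_{\mathring{x}_i}^\perp$ must be kept in the stated skew-symmetric form. Apart from this bookkeeping, the argument is a direct computation, and the main thing to double-check is that the six blocks of $\mathfrak{so}(N)$ indeed exhaust a basis, so that the enumeration produces all of $\mathcal{K}_0$.
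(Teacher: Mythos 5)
Your proposal is correct and matches the paper's argument: the paper also enumerates the generators of $\mathfrak{so}(N)$ by coordinate pairs $x_i\partial_{x_j}-x_j\partial_{x_i}$ and classifies the normal projection according to which block ($\mathring{x}$, $y$, or $z$) each index lies in, noting that the result is zero or one of the listed forms. You merely spell out the six-case bookkeeping (including the observations that $z\equiv 0$ and $\partial_{y_j}^\perp=0$ on $\Gamma$) that the paper leaves implicit.
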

\begin{corollary}
\label{cor:rotation-est}
Let $\Gamma =\mathring{\Gamma} \times \mathbb{R}^{n-k}$. and set $r_0=\diam(\mathring{\Gamma})+1$. There exists $C$ depending on $N,\mathring{\Gamma}$ so that if $J \in \mathcal{K}_0$, then $|J| \leq C\langle x\rangle \|J\|_{L^2(B_{r_0})}$, $|\nabla J| + |\nabla^2 J| \leq C\langle x\rangle \|J\|_{L^2(B_{r_0})}$ and $|\nabla^2 J ( \cdot, \pr_{y_i})|  \leq C\|J\|_{L^2(B_{r_0})} $. 

In particular, $\mathcal{K}_0\subset W^{2,2}$. 
\end{corollary}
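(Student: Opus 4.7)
The strategy is to combine the explicit description of $\mathcal{K}_0$ given by Lemma \ref{lem:rot-fields} with the fact that $\mathcal{K}_0$ is finite-dimensional: it is the image of the linear map $\mathfrak{so}(N)\to\Gamma(N\Gamma)$, $\theta\mapsto J_\theta$, so $\dim\mathcal{K}_0\leq \binom{N}{2}$. On a finite-dimensional space every pair of norms is equivalent, and we will exploit this between a pointwise/sup-style norm on the left-hand side and the $L^2(B_{r_0})$ norm on the right.

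\medskip
\noindent\emph{Step 1 (Pointwise polynomial bounds on a basis).} Fix a spanning list $E_1,\ldots,E_M$ consisting of fields of types (1)-(3) from Lemma \ref{lem:rot-fields}. Since $\mathring{\Gamma}$ is compact, the coefficients $\mathring{x}_i$, $\pr_{\mathring{x}_j}^\perp$, and $\pr_{z_\alpha}$ are bounded on $\mathring{\Gamma}$, and the ambient connection is flat along the $\mathbb{R}^{n-k}$-factor. A direct computation of covariant derivatives (using that $\nabla_{y_l}y_j=\delta_{jl}$, and that $\mathring{\Gamma}$-directional derivatives act only on the $\mathring{x}$-dependent factors) then gives
\[
 |E_a|+|\nabla E_a|+|\nabla^2 E_a|+|\nabla^2 E_a(\cdot,y)| \leq C\langle x\rangle
\]
pointwise on $\Gamma$, with $C=C(N,\mathring{\Gamma})$. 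For type (1) this is clear since $E_a$ is bounded and independent of $y$; for types (2)-(3) each factor of $y$ that appears is exactly degree one, and is neither amplified by differentiation nor by the contraction with $y$ in the fourth quantity.

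\medskip
\noindent\emph{Step 2 ($\|\cdot\|_{L^2(B_{r_0})}$ is a norm on $\mathcal{K}_0$).} Every $J\in\mathcal{K}_0$ is smooth and solves $LJ=0$, as recalled in Section \ref{sec:rot0}. Since $r_0=\diam(\mathring{\Gamma})+1$, the open subset $B_{r_0}\cap\Gamma$ is non-empty (it contains $\mathring{\Gamma}\times\{0\}$, after using the standard size bound for closed shrinkers to arrange $\mathring{\Gamma}\subset \overline{B_{r_0}}$). By elliptic unique continuation applied to the second-order elliptic system $LJ=0$ on the connected manifold $\Gamma$, $J|_{B_{r_0}\cap\Gamma}\equiv 0$ forces $J\equiv 0$. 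Thus $\|\cdot\|_{L^2(B_{r_0})}$ is a bona fide norm on $\mathcal{K}_0$.

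\medskip
\noindent\emph{Step 3 (Equivalence of norms and $W^{2,2}$ membership).} Define the norm
\[
 \|J\|_\ast := \sup_{\Gamma}\langle x\rangle^{-1}\bigl(|J|+|\nabla J|+|\nabla^2 J|+|\nabla^2 J(\cdot,y)|\bigr),
\]
which is finite on $\mathcal{K}_0$ by Step 1 and linearity. On the finite-dimensional space $\mathcal{K}_0$ the two norms $\|\cdot\|_\ast$ and $\|\cdot\|_{L^2(B_{r_0})}$ (a norm by Step 2) are equivalent, so there exists $C=C(N,\mathring{\Gamma})$ with $\|J\|_\ast \leq C\|J\|_{L^2(B_{r_0})}$ for all $J\in\mathcal{K}_0$; this is exactly the pointwise assertion of the corollary. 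Finally, since the Gaussian weight $\rho$ has finite moments of every order, $\langle x\rangle\in L^2(\Gamma,\rho)$, and the pointwise bounds $|J|,|\nabla J|,|\nabla^2 J|\lesssim\langle x\rangle$ immediately give $J\in W^{2,2}$.

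\medskip
The main obstacle is not conceptual but the careful verification in Step 1 of the bound on $|\nabla^2 J(\cdot,y)|$ by $C\langle x\rangle$ rather than the naive product $|\nabla^2 J|\cdot|y|\leq C\langle x\rangle^2$. This sharper bound is essential for the later Jacobi-field estimates and relies on the structural observation that each basis field is at most linear in position: every $\nabla_y$ acting on a coefficient depending only on $\mathring{x}$ produces zero, while each $\nabla_y$ acting on a $y_j$-factor replaces it by a constant, so that after one contraction and two derivatives the total $y$-degree is still at most one.
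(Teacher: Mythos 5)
Your proof is correct and mirrors the paper's approach: pointwise growth bounds on the explicit spanning set of $\mathcal{K}_0$ from Lemma~\ref{lem:rot-fields}, followed by finite-dimensionality of $\mathcal{K}_0$ to compare with the $L^2(B_{r_0})$ norm. In Step~2 you invoke unique continuation for the elliptic system $LJ=0$, which is heavier than necessary given that the basis fields are at most linear in $y$ with smooth coefficients on the compact $\mathring{\Gamma}$ (so vanishing on any nonempty open subset of $\Gamma$ forces vanishing identically by the polynomial structure alone); the argument is valid either way, though your justification that $\mathring{\Gamma}\subset\overline{B_{r_0}}$ is slightly loose --- what is actually needed, and what holds, is merely that $B_{r_0}\cap\Gamma$ is a nonempty open set.
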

\begin{proof}
The group $\mathrm{SO}(N)$ is generated by rotations of 2 coordinates at a time. Rotation in the $x_ix_j$ plane is generated by the vector field $x_i \pr_{x_j} - x_j \pr_{x_i}$, whose normal projection is either zero or fits one of the above three types depending on whether $i,j$ correspond to coordinates on $\mathbb{R}^m\ni \mathring{x}$, the linear directions $\mathbb{R}^{n-k} \ni y$ or the extra dimensions $\mathbb{R}^{N-m-n} \ni z$. The estimates follow from the given forms, noting that $L^2(B_{r_0})$ gives a norm on $\mathcal{K}_0$. 
\end{proof}

Henceforth, we denote by $\pi_{\mathcal{K}_0}$ the $L^2$-projection to $\mathcal{K}_0$ and $\mathcal{K}_0^\perp$ the orthocomplement in $L^2$. We also fix a section $\iota$ identifying $\mathcal{K}_0$ with a subspace of $\mathfrak{so}(N)$, and fix metrics on $\mathfrak{so}(N)$ and $\mathrm{SO}(N)$ so that $\iota$ is an isometry and $\exp$ coincides with the Riemannian exponential, which is in particular a radial isometry.

\section{Variation of geometric quantities}
\label{sec:variation}

In this section we consider (normal) variations of a submanifold, and present the first and second variation of geometric quantities including in particular $A$ and $\phi = \frac{1}{2} X^\perp -\mathbf{H}$. 

Let $\Sigma$ be a submanifold with a fixed immersion $X_0 :\Sigma^n \rightarrow \mathbb{R}^N$, and a one-parameter family of immersions $X: I\times \Sigma^n\rightarrow \mathbb{R}^N$ with $X(0,p)=X_0(p)$. We use $s$ for the coordinate on $I=(-\epsilon,\epsilon)$, and subscripts to denote differentiation with respect to $s$. If $p_i$ are local coordinates on $\Sigma$, we get the tangent frame $X_i = X_*(\frac{\pr}{\pr p^i})$. 

All geometric quantities such as $\Pi, g,A$ should be considered as functions of $s,p$, given by the value of each quantity at $X(s,p)$ on the submanifold defined by $X(s,\cdot)$. For instance, the metric $g_{ij}(s,p)$ is given by $g_{ij}=\langle X_i, X_j\rangle$. Recall $\Pi$ is the projection to the normal bundle. Also recall our convention that repeated lower indices are contracted via the (inverse) metric $g^{ij}$, although we will raise indices when it suits the exposition. 

\subsection{First variation}

The first variation was calculated in \cite{CM19}:

\begin{proposition}[\cite{CM19}]
\label{prop:1stvar}
At $s=0$, suppose $X_s=V=V^\perp$; then we have

\begin{alignat}{3}
\label{eq:dPi}
&\Pi_s(W) &&=  -\Pi(\nabla_{W^\top} V)  - X_j g^{ij}\langle \Pi(\nabla_{X_i} V), W\rangle,\\
\label{eq:dg}
&(g_{ij})_s &&=  -2A^V_{ij}, (g^{ij})_s = 2g^{ik} A^V_{km} g^{mj}, \\
\label{eq:dA}
&(A_{ij})_s &&= -X_k \langle \nabla^\perp_{X_k} V, A_{ij}\rangle + (\nabla^\perp\nabla^\perp V)(X_i,X_j) - A^V_{ik} A_{jk},\\
 \label{eq:dphi}
 & \phi_s = \mathcal{D}\mathcal{\varphi} (V)&&= LV - X_j g^{ij} \langle \nabla^\perp_{X_i} V,\phi\rangle.
  \end{alignat}
\end{proposition}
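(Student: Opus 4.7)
The plan is to derive each formula in sequence by differentiating the defining expression in $s$ and systematically decomposing every ambient derivative into its tangent and normal components, exploiting that $V = V^\perp$ at $s=0$.

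First I would handle the metric: from $(g_{ij})_s = \langle X_{is}, X_j\rangle + \langle X_i, X_{js}\rangle$ and $X_{is} = \nabla_{X_i} V$ in $\mathbb{R}^N$, the normality of $V$ gives $\langle \nabla_{X_i} V, X_j\rangle = -\langle V, \nabla_{X_i} X_j\rangle = -A^V_{ij}$, so $(g_{ij})_s = -2A^V_{ij}$; the co-metric formula follows from $g^{ij}g_{jk} = \delta^i_k$. Along the way this computation produces the useful decomposition $X_{is} = \nabla^\perp_{X_i} V - g^{kl} A^V_{ik} X_l$. Next, for $\Pi$, I would write $\Pi(W) = W - g^{ij}\langle W, X_i\rangle X_j$ and differentiate in $s$. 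Substituting $(g^{ij})_s$ and the above decomposition of $X_{is}$, the three resulting contributions to the tangent projection collapse: two of them (from the $(g^{ij})_s$ term and the tangent pieces of $X_{is}, X_{js}$) cancel to a single shape-operator term, while the normal pieces of $X_{is}$ and $X_{js}$ assemble respectively into the $-X_j g^{ij}\langle \Pi(\nabla_{X_i} X_s), W\rangle$ term and the $-\Pi(\nabla_{W^T} V)$ term, after recognizing $g^{ij}\langle W, X_i\rangle \nabla^\perp_{X_j} V = \nabla^\perp_{W^T} V$.

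For $A_{ij}$, I would start from $A_{ij} = \Pi(\partial_i X_j)$ and differentiate to get $(A_{ij})_s = \Pi_s(\partial_i X_j) + \Pi(\partial_i \partial_j V)$. Since the normal part of $\partial_i X_j$ is $A_{ij}$ and its tangent part is $\Gamma^l_{ij} X_l$, the $\Pi_s$ formula applied to $\partial_i X_j$ yields $-\Gamma^l_{ij}\nabla^\perp_{X_l} V - X_l g^{kl}\langle \nabla^\perp_{X_k} V, A_{ij}\rangle$. For the second term, expanding $\partial_i\partial_j V$ via the decomposition of $\partial_i V$ and using the normal Hessian identity $\nabla^\perp_{X_j}\nabla^\perp_{X_i} V = (\nabla^\perp\nabla^\perp V)(X_i,X_j) + \Gamma^l_{ij}\nabla^\perp_{X_l} V$ produces $(\nabla^\perp\nabla^\perp V)(X_i,X_j) + \Gamma^l_{ij}\nabla^\perp_{X_l} V - A^V_{ik}A_{jk}$ after projecting with $\Pi$ (the $\Pi(X_l) = 0$ pieces drop). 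The Christoffel contributions cancel and the stated formula emerges.

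Finally, for $\phi = \tfrac12 \Pi(X) + g^{ij}A_{ij}$, I would compute $\phi_s = \tfrac12(\Pi_s(X) + V) + (g^{ij})_s A_{ij} + g^{ij}(A_{ij})_s$ and plug in the preceding formulae. The tangent projection term $-\tfrac12 \Pi(\nabla_{X^T} V) = -\tfrac12 \nabla^\perp_{X^T} V$ is the drift, $\tfrac12 V$ is the zeroth-order term, and the trace $g^{ij}(\nabla^\perp\nabla^\perp V)(X_i,X_j) = \Delta^\perp V$ combines with $g^{ij}A^V_{ik}A_{jk} = \langle V, A_{kl}\rangle A_{kl}$ and the $(g^{ij})_s A_{ij} = 2\langle V, A_{kl}\rangle A_{kl}$ contribution to produce $\Delta^\perp V + \langle V, A_{kl}\rangle A_{kl}$; together these assemble $LV$. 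The residual terms are $-\tfrac12 X_j g^{ij}\langle \nabla^\perp_{X_i} V, X^\perp\rangle$ from $\tfrac12 \Pi_s(X)$ and $X_l g^{kl}\langle \nabla^\perp_{X_k} V, \mathbf{H}\rangle$ from tracing $(A_{ij})_s$; these regroup precisely into $-X_j g^{ij}\langle \nabla^\perp_{X_i} V, \tfrac12 X^\perp - \mathbf{H}\rangle = -X_j g^{ij}\langle \nabla^\perp_{X_i} V, \phi\rangle$, vanishing at shrinkers as expected.

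The main obstacle is the $A_{ij}$ computation, where one must carefully track the tangent/normal split of the ambient Hessian $\partial_i \partial_j V$ and verify that the Christoffel contributions from $\Pi_s$ cancel those arising from the normal Hessian identity; the subsequent reassembly for $\phi_s$, in which the `error' terms from $\Pi_s$ and $(A_{ij})_s$ must recombine into an inner product against $\phi$ itself, is the other delicate point.
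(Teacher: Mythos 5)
Your proposal is correct: the paper does not reprove this proposition but cites \cite{CM19}, and your direct differentiation of $g_{ij}=\langle X_i,X_j\rangle$, $\Pi(W)=W-g^{ij}\langle W,X_i\rangle X_j$, $A_{ij}=\Pi(X_{ij})$, and $\phi=\tfrac12\Pi(X)+g^{ij}A_{ij}$, using the tangent/normal split $X_{is}=\nabla^\perp_{X_i}V-g^{kl}A^V_{ik}X_l$, is exactly the computation underlying the cited formulas. All cancellations you flag (the shape-operator terms in $\Pi_s$, the Christoffel terms between $\Pi_s(X_{ij})$ and $\Pi(\partial_i\partial_j V)$, and the regrouping of residual terms into $-X_jg^{ij}\langle\nabla^\perp_{X_i}V,\phi\rangle$) check out.
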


\subsection{Second variation}

We now proceed to compute the second variation of $\phi$.

\begin{lemma}
\label{lem:d2Pi}
At $s=0$, assume $(X_s)^\top= (X_{ss})^\top=0$; then the second variation $\Pi_{ss}$ acts by:

\begin{alignat}{3}
\label{eq:d2PiTan}
&\Pi_{ss}(W^\top) &&= -\Pi(\nabla_{W^\top}X_{ss}) + 2\Pi(\nabla_{\nabla^\top_{W^\top} X_s} X_s) + 2 X_j g^{ij} \langle \Pi(\nabla_{X_i} X_s), \nabla_{W^\top} X_s\rangle,\\
\label{eq:d2PiNT}
&\langle X_i, \Pi_{ss}\Pi(W)\rangle &&= 2\langle W,\Pi(\nabla_{\nabla^\top_{X_i} X_s}X_s)\rangle - \langle W,\Pi(\nabla_{X_i} X_{ss})\rangle,\\
\label{eq:d2PiNN}
&\Pi \Pi_{ss}\Pi(W) &&= -2g^{ij}\langle W,\Pi(\nabla_{X_i} X_s)\rangle \Pi(\nabla_{X_j}X_s).
\end{alignat}

\end{lemma}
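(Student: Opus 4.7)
The three formulas describe the full second variation of the projection $\Pi$ at $s=0$ under the normality assumptions on $X_s$ and $X_{ss}$, split according to whether the input and output are tangential or normal. My plan is to derive all three from the same two algebraic relations satisfied by $\Pi$, namely $\Pi\circ\Pi=\Pi$ and $\Pi(X_i)=0$, together with the first variation formula \eqref{eq:dPi} from Proposition \ref{prop:1stvar}.

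First I would handle \eqref{eq:d2PiNN}. Differentiating $\Pi\circ\Pi=\Pi$ twice in $s$ gives $\Pi_{ss}\Pi + 2\Pi_s\circ\Pi_s + \Pi\Pi_{ss}=\Pi_{ss}$. Applied to a normal vector $W$ (so $\Pi(W)=W$) and then projected by $\Pi$, this collapses to $\Pi\Pi_{ss}\Pi(W) = -2\Pi(\Pi_s\Pi_s(W))$. The first variation formula shows that for normal $W$ we have $\Pi_s(W) = -X_j g^{ij}\langle \Pi(\nabla_{X_i}X_s), W\rangle$, which is tangential, while on a tangent basis vector $\Pi_s(X_j) = -\Pi(\nabla_{X_j}X_s) = -\nabla^\perp_{X_j}X_s$, which is normal. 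Composing these two identities gives the claimed expression in \eqref{eq:d2PiNN}.

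Next I would treat \eqref{eq:d2PiNT} and \eqref{eq:d2PiTan} together. Differentiating $\Pi(X_i)=0$ twice in $s$ and using $X_{is}=\nabla_{X_i}X_s$, $X_{iss}=\nabla_{X_i}X_{ss}$ (by commutation of mixed partials of the ambient immersion $X(s,p)$), I obtain the key identity $\Pi_{ss}(X_i) = -\Pi(\nabla_{X_i}X_{ss}) - 2\Pi_s(\nabla_{X_i}X_s)$. For \eqref{eq:d2PiNT}, take the inner product with a normal vector $W$ and use the symmetry of $\Pi_{ss}$ to rewrite $\langle X_i, \Pi_{ss}\Pi(W)\rangle = \langle \Pi_{ss}(X_i), W\rangle$. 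At $s=0$ the tangent part of $\nabla_{X_i}X_s$ is $-A^{X_s}_{ik}g^{kl}X_l$ and its normal part is $\nabla^\perp_{X_i}X_s$, so substituting into \eqref{eq:dPi} cleanly yields the stated formula once the tangent-tangent contribution drops out against the normal $W$.

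For \eqref{eq:d2PiTan}, the linearity of $\Pi_{ss}$ at the point $X(0,p)$ lets me write $\Pi_{ss}(W^T) = g^{ij}\langle X_j, W\rangle\,\Pi_{ss}(X_i)$, which collapses to $-\Pi(\nabla_{W^T}X_{ss}) - 2\Pi_s(\nabla_{W^T}X_s)$ by the identity above. Expanding the remaining $\Pi_s$ via \eqref{eq:dPi} applied to the argument $\nabla_{W^T}X_s$ (whose tangent part is $\nabla^T_{W^T}X_s$) produces exactly the two additional terms $2\Pi(\nabla_{\nabla^T_{W^T}X_s}X_s)$ and $2X_jg^{ij}\langle\Pi(\nabla_{X_i}X_s),\nabla_{W^T}X_s\rangle$ in the formula. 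The only real care required is bookkeeping of the tangent/normal splits of $\nabla_{X_i}X_s$ at $s=0$; there is no substantive analytic obstacle, since each step is an algebraic consequence of Proposition \ref{prop:1stvar} and the structural identities for the projection.
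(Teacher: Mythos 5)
Your proposal is correct and follows essentially the same route as the paper: differentiate $\Pi^2=\Pi$ and $\Pi(X_i)=0$ twice in $s$, feed in the first-variation formula \eqref{eq:dPi}, and use the symmetry of $\Pi_{ss}$ (inherited from $\Pi$) to obtain \eqref{eq:d2PiNT}. The only cosmetic difference is the order in which you extract \eqref{eq:d2PiNT} and \eqref{eq:d2PiTan} from the common identity $\Pi_{ss}(X_i)+2\Pi_s(X_{si})+\Pi(X_{ssi})=0$, which is immaterial.
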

\begin{proof}
 Differentiating $\Pi^2=\Pi$ twice, we have $\Pi_{ss} \Pi + 2\Pi_s^2 + \Pi\Pi_{ss}= \Pi_{ss}$ and hence $\Pi \Pi_{ss} \Pi = -2\Pi_s^2 \Pi$. Then by (\ref{eq:dPi}), we have 
\begin{equation}
\begin{split}
 \Pi \Pi_{ss}\Pi(W) &= -2\Pi_s^2\Pi(W) = -2\Pi_s(-X_j g^{ij} \langle \Pi(\nabla_{X_i} X_s), W\rangle) 
 \\&= -2g^{ij}\langle W,\Pi(\nabla_{X_i} X_s)\rangle \Pi(\nabla_{X_j}X_s). 
\end{split}
\end{equation}
 
 Similarly, differentiate $\Pi(X_i) =0$ twice to get $\Pi_{ss}(X_i) + 2\Pi_s(X_{si}) + \Pi(X_{ssi})=0$. Together with (\ref{eq:dPi}) this immediately yields (\ref{eq:d2PiTan}). 
   
 Finally, since $\Pi$ is a symmetric operator, so too are its derivatives, and for any $i$ we have $\langle X_i , \Pi_{ss}\Pi(W) \rangle = \langle \Pi_{ss}(X_i), \Pi(W)\rangle$. This gives (\ref{eq:d2PiNT}) as only the first two terms in (\ref{eq:d2PiTan}) are normal.  \end{proof}

Henceforth we assume at $s=0$ that $X_s = V$, $V^\top=0$, and $X_{ss}=0$. We record some relations for $V$:

\begin{lemma}
\label{lem:dV}
Let $V$ be a normal vector field on $\Sigma$, then
\begin{alignat*}{3}
&\nabla^\top_j V &&= -X_l g^{lk} \langle A_{jk}, V\rangle,\\
&\nabla^\top_i\nabla^\perp_j V &&= -X_l g^{lk} \langle A_{ik}, \nabla^\perp_j V\rangle,\\
&\nabla_i\nabla^\top_j V &&=  -X_{il} g^{lk} \langle A_{jk}, V\rangle -  X_l g^{lk} \langle A_{jk}, \nabla^\perp_iV\rangle  -X_l g^{lk} \langle \nabla_i A_{jk}, V\rangle,
\end{alignat*}

\end{lemma}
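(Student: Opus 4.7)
All three formulas flow from a single observation: since $V$, $\nabla^\perp_j V$, and $A_{jk}$ are normal, their ambient derivatives paired with normal vectors pick up only the normal-connection parts, and the tangential derivatives are controlled by the Weingarten splitting $X_{jk}=(X_{jk})^T+A_{jk}$. My plan is to derive (1) from differentiating an orthogonality relation, deduce (2) by substitution, and obtain (3) by Leibniz-differentiating (1).

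For (1), I would start from $\langle V, X_k\rangle = 0$, which holds because $V$ is normal. Differentiating in the direction $X_j$ gives $\langle \nabla_j V, X_k\rangle + \langle V, X_{jk}\rangle = 0$. Since $V$ annihilates the tangential part of $X_{jk}$, the second term is $\langle V, A_{jk}\rangle$. Hence $\langle \nabla_j V, X_k\rangle = -\langle A_{jk}, V\rangle$, and reconstructing the tangent projection through the dual frame $X_l g^{lk}$ yields (1). Identity (2) is then immediate: $\nabla^\perp_j V$ is itself a normal vector field on $\Sigma$, so (1) applies with $V$ replaced by $\nabla^\perp_j V$.

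For (3), I would apply the ambient derivative $\nabla_i$ to the expression for $\nabla^T_j V$ from (1), using the product rule. Three groups of terms appear: a contribution $-X_{il}g^{lk}\langle A_{jk}, V\rangle$ from differentiating the frame-plus-metric factor $X_l g^{lk}$; a contribution $-X_l g^{lk}\langle A_{jk}, \nabla^\perp_i V\rangle$ from differentiating $V$, where only the normal-connection piece survives because $A_{jk}$ is normal; and a contribution $-X_l g^{lk}\langle \nabla_i A_{jk}, V\rangle$ from differentiating $A_{jk}$, where only the normal-connection piece survives because $V$ is normal.

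The only point requiring care is the bookkeeping of inverse-metric derivatives inside $X_{il}g^{lk}$: this expression must be read as the total ambient derivative $\nabla_i(X_l g^{lk})$ of the dual frame, consistent with the dual-frame notation $X_j g^{ij}$ already in use in Proposition~\ref{prop:1stvar}. The apparent mismatch one would get from reading $X_{il}g^{lk}$ as the literal product $g^{lk}\cdot X_{il}$ is resolved via $\partial_i g^{lk} = -g^{la}\Gamma^k_{ia} - g^{ka}\Gamma^l_{ia}$ together with the Gauss splitting $X_{il} = \Gamma^r_{il}X_r + A_{il}$. With that convention fixed, (3) is a clean Leibniz expansion with no hidden contributions, and this is the main (and essentially only) obstacle in the proof.
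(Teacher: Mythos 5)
Your proposal is correct and follows the same route as the paper's own (very terse) proof: differentiate $\langle V, X_k\rangle = 0$ to get the first identity, apply it to the normal field $\nabla^\perp_j V$ for the second, and Leibniz-differentiate the first for the third. Your caveat about how to read $X_{il}g^{lk}$ is legitimate and the paper silently elides it; either your reading (total derivative of the dual frame) or the implicit use of normal coordinates at the point makes the third formula come out as stated, and neither changes the substance of the argument.
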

\begin{proof}
Note that
$\langle \nabla_i V,X_j\rangle=-\langle V,\nabla_i X_j\rangle=-\langle V,A_{ij}\rangle,$
which implies the first two formulae. The third follows by differentiating the above. \end{proof}

\begin{proposition}
At $s=0$, assume that $X_s = V = V^\perp$ and $X_{ss}=0$; then we have
\begin{equation}
(g_{ij})_{ss} = 2 g^{kl}A^V_{ik} A^V_{lj} + 2\langle \nabla^\perp_i V, \nabla_j^\perp V\rangle, 
\end{equation}

\begin{equation}
\label{eq:d2ginv}
(g^{ij})_{ss} = g^{i i_2} g^{jj_2}( 6 g^{kl} A^V_{i_2 k} A^V_{l j_2} - 2\langle \nabla^\perp_{i_2} V, \nabla_{j_2}^\perp V\rangle), 
\end{equation}

\begin{equation}
\label{eq:d2A}
\begin{split}
\frac{1}{2}(A_{ij})_{ss} =& - X_k \langle (\nabla^\perp\nabla^\perp V)_{ij} ,\nabla^\perp_k V\rangle + X_k A^V_{jl} \langle A_{il} , \nabla_k^\perp V\rangle \\&+  \left(\langle A_{ik},\nabla_j^\perp V\rangle +\langle A_{jk}, \nabla_i^\perp V\rangle + \langle (\nabla A)_{jki}, V\rangle  \right)\nabla_k^\perp V 
\\&  - X_k A^V_{km} \langle A_{ij}, \nabla^\perp_m V\rangle - \langle A_{ij}, \nabla^\perp_k V\rangle \nabla^\perp_k V.
\end{split}
\end{equation}

\end{proposition}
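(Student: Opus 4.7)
The plan is to prove the three identities in order, each time reducing to quantities whose first variation was computed in Proposition \ref{prop:1stvar} and whose normal-vs-tangential decomposition is controlled by Lemma \ref{lem:dV} and Lemma \ref{lem:d2Pi}. The assumptions $X_s = V$, $V^T = 0$, $X_{ss} = 0$ at $s = 0$ eliminate several terms that would otherwise appear.

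For $(g_{ij})_{ss}$, I would differentiate $g_{ij} = \langle X_i, X_j\rangle$ twice in $s$; since $X_{ssi} = 0$, the only surviving contribution is $2\langle \nabla_i V, \nabla_j V\rangle$. Splitting $\nabla_i V = \nabla^\perp_i V + \nabla^T_i V$ and using Lemma \ref{lem:dV} to rewrite $\nabla_i^T V = -X_l g^{lk} A^V_{ik}$, the mixed tangential-normal inner products vanish by orthogonality, and the tangential-tangential piece contracts via $\langle X_l, X_m\rangle = g_{lm}$ to $g^{kl}A^V_{ik}A^V_{lj}$, giving the claimed identity. For $(g^{ij})_{ss}$, I would apply $\partial_s^2$ to $g^{ij}g_{jk} = \delta^i_k$ to obtain $(g^{ij})_{ss}g_{jk} = -2(g^{ij})_s (g_{jk})_s - g^{ij}(g_{jk})_{ss}$, substitute (\ref{eq:dg}) and the preceding formula, and raise the free index with $g^{kl}$; the two quadratic-in-$A^V$ contributions combine with coefficient $6$ and the normal-Hessian contribution enters with coefficient $-2$, as stated.

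The main obstacle is $(A_{ij})_{ss}$. My plan is to differentiate the first-variation formula (\ref{eq:dA}) term by term at $s = 0$, applying the chain rule to each $s$-dependent factor: $X_k$ (with $s$-derivative $\nabla_k V$, decomposed via Lemma \ref{lem:dV}); the inverse metric $g^{kl}$ (via (\ref{eq:dg})); the second fundamental form $A_{ij}$ (via (\ref{eq:dA}) itself); and the normal Hessian $(\nabla^\perp\nabla^\perp V)_{ij}$. The last of these is the most delicate, since both the normal projection $\Pi$ and the tangential part of the connection vary with $s$. An alternative route begins from $A_{ij} = \Pi(X_{ij})$, so that $(A_{ij})_{ss} = \Pi_{ss}(X_{ij}) + 2\Pi_s(X_{sij})$ once $X_{ss} = 0$ is used; here one expands $X_{ij} = A_{ij} + \Gamma^k_{ij}X_k$ via the Gauss equation, applies both (\ref{eq:d2PiTan}) and (\ref{eq:d2PiNN}) from Lemma \ref{lem:d2Pi} to evaluate $\Pi_{ss}(X_{ij})$, and treats $X_{sij} = \partial_i\partial_j V$ by iterating Lemma \ref{lem:dV} before applying (\ref{eq:dPi}) for $\Pi_s$.

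The bookkeeping in the $A$ computation is the only real difficulty: many candidate terms partially cancel, and the factor $(\nabla A)_{jki}$ in the final expression arises through the Codazzi identity $\nabla_k A_{ij} = \nabla_i A_{jk}$ when interchanging the order of ambient partial derivatives on $V$ against $A$. Separating the tangential-direction pieces proportional to $X_k$ from the normal-direction pieces proportional to $\nabla^\perp_k V$ mirrors the tangential/normal split of Lemma \ref{lem:d2Pi}, and should be tracked carefully as the terms are assembled into the three groups displayed in the proposition.
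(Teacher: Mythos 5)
Your treatment of $(g_{ij})_{ss}$ and $(g^{ij})_{ss}$ matches the paper exactly: differentiate $g_{ij}=\langle X_i,X_j\rangle$ twice using $X_{ss}=0$, split $X_{si}$ into normal and tangential parts via Lemma~\ref{lem:dV}, and then pass to $g^{ij}$ via two derivatives of $g^{ij}g_{jk}=\delta^i_k$ together with~(\ref{eq:dg}). For $(A_{ij})_{ss}$, your \emph{alternative route}---starting from $A_{ij}=\Pi(X_{ij})$, writing $(A_{ij})_{ss}=\Pi_{ss}(X_{ij})+2\Pi_s(X_{sij})$, splitting $X_{ij}$ into its tangential and normal parts, and applying Lemma~\ref{lem:d2Pi} and Lemma~\ref{lem:dV}---is precisely the computation carried out in the paper.

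Your \emph{primary} route for $(A_{ij})_{ss}$, namely differentiating~(\ref{eq:dA}) term by term, has a pitfall beyond the ones you flag. Formula~(\ref{eq:dA}) is derived under the assumption that $X_s$ is \emph{normal}; for the natural linear family $X(s,p)=X_0(p)+sV(p)$ with $X_{ss}\equiv 0$, the velocity $V$ acquires a nonzero tangential component relative to $\Sigma_s$ for $s\ne 0$, so~(\ref{eq:dA}) does not represent $(A_{ij})_s$ away from $s=0$, and its termwise $s$-derivative at $0$ is not $(A_{ij})_{ss}$. One would need to supplement~(\ref{eq:dA}) with the tangential (Lie-derivative) contributions before differentiating, which is more bookkeeping than the $\Pi(X_{ij})$ route. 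The identity $A_{ij}=\Pi(X_{ij})$ holds for any immersion with no restriction on the velocity, which is why the paper takes that path. Also a small note: the term $\langle(\nabla A)_{jki},V\rangle$ in~(\ref{eq:d2A}) appears directly from expanding $\nabla^T_i\nabla_j V$ via Lemma~\ref{lem:dV} (as $\langle\nabla_i A_{jk},V\rangle$ up to index convention), with no Codazzi swap at this step; Codazzi is invoked only in the subsequent passage to the second variation of $\phi$.
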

\begin{proof}
Differentiate $g_{ij} = \langle X_i,X_j\rangle$ twice, using $X_{ss}=0$ to get \[(g_{ij})_{ss} = 2\langle F_{si,} ,F_{sj}\rangle = 2 g^{kl}A^V_{ik} A^V_{lj} + 2\langle \nabla^\perp_i V, \nabla_j^\perp V\rangle.\] Here we used Lemma \ref{lem:dV} for the tangent part of $X_{si}$. Then differentiate $g^{ij} g_{jk} = \delta^i_k$ twice to find that $(g^{ij})_{ss} g_{jk} = -2(g^{ij})_s(g_{jk})_s - g^{ij} (g_{jk})_{ss}$. Together with (\ref{eq:dg}) this gives (\ref{eq:d2ginv}). 

Now as $A_{ij}=\Pi(X_{ij})$, we have $A''_{ij} =  2\Pi_s(X_{sij}) + \Pi_{ss}(X_{ij})$. By (\ref{eq:dPi}) and Lemma \ref{lem:dV},

\begin{alignat*}{3}
\Pi_s (X_{sij}) &=&& - X_k \langle \nabla_i^\perp \nabla_j V, \nabla_k^\perp V\rangle - \nabla^\perp_{\nabla_i^\top \nabla_j V} V
\\&=&& -X_k \langle \nabla_i^\perp \nabla_j^\perp V,\nabla_k^\perp V\rangle + X_k A^V_{jl} \langle A_{il} , \nabla_k^\perp V\rangle  
\\&&& +  (\langle A_{ik},\nabla_j^\perp V\rangle +\langle A_{jk}, \nabla_i^\perp V\rangle + \langle \nabla_i A_{jk}, V\rangle + A^V_{jl} \langle X_{il}^\top, X_k\rangle )\nabla_k^\perp V 
\end{alignat*}

Using Lemmas \ref{lem:d2Pi} and \ref{lem:dV} we have

\begin{alignat*}{3}
\frac{1}{2}\Pi_{ss}(X_{ij}) &={}&& \nabla^\perp_{\nabla^\top_{X_{ij}^\top}V}V + X_l g^{kl} \langle \nabla^\perp_k V, \nabla^\perp_{X_{ij}^\top} V\rangle \\ &&&+ X_l g^{kl} \langle A_{ij}, \nabla^\perp_{\nabla^\top_k V}V\rangle - g^{kl} \langle A_{ij}, \nabla^\perp_k V\rangle \nabla^\perp_l V 
\\&={}&&  A^V _{kl} \langle X_{ij}^\top, X_l\rangle  \nabla^\perp_k V + X_l g^{kl} \langle \nabla^\perp_k V, \nabla^\perp_{X_{ij}^\top} V\rangle \\ &&& - X_l g^{kl} A^V_{km} \langle A_{ij}, \nabla^\perp_m V\rangle - g^{kl} \langle A_{ij}, \nabla^\perp_k V\rangle \nabla^\perp_l V.
\end{alignat*}

Combining these gives (\ref{eq:d2A}). 

\end{proof}

\begin{proposition}
At $s=0$, assume that $X_s = V = V^\perp$ and $X_{ss}=0$; then we have
\begin{equation}
\label{eq:d2phi}
\begin{split}
\frac{1}{2}\phi_{ss}=  \frac{1}{2}\mathcal{D}^2\mathcal{\varphi}(V,V)={}& \langle \nabla_k^\perp \phi, V\rangle \nabla^\perp_k V - \langle \phi, \nabla_k^\perp V\rangle \nabla^\perp_k V 
\\&+A_{ij} A^V_{ik} A^V_{kj} - A_{ij} \langle \nabla^\perp_i V, \nabla_j^\perp V\rangle + 2A^V_{ij} (\nabla^\perp\nabla^\perp V)_{ij} 
 \\& +2\langle A_{ij}, \nabla_i^\perp V\rangle \nabla_j^\perp V 
 \\& -X_k ( A^V_{ik} \langle \phi,\nabla^\perp_i V\rangle - \langle LV, \nabla^\perp_k V\rangle) .
 \end{split}
\end{equation}

\end{proposition}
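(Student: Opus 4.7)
My plan is to differentiate the identity $\phi=\tfrac{1}{2}\Pi(X)+g^{ij}A_{ij}$ twice in $s$ and substitute the second-order formulae for $\Pi$, $g^{ij}$, and $A_{ij}$ already established above. Using $X_{ss}=0$ and $\Pi(V)=V$, the product rule gives
\[
\phi_{ss}=\tfrac{1}{2}\Pi_{ss}(X)+\Pi_{s}(V)+(g^{ij})_{ss}A_{ij}+2(g^{ij})_{s}(A_{ij})_{s}+g^{ij}(A_{ij})_{ss},
\]
after which it is purely a matter of substituting the known pieces and simplifying.

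The five summands are handled as follows. The term $\Pi_s(V)$ is read off from (\ref{eq:dPi}) using $V^T=0$. For $\Pi_{ss}(X)$ I split $X=X^T+X^\perp$: the tangential part is given by (\ref{eq:d2PiTan}) (with the $X_{ss}$ term dropping out), while the normal part is computed from the decomposition $\Pi_{ss}(X^\perp)=\Pi\Pi_{ss}\Pi(X^\perp)+X_i g^{ij}\langle X_j,\Pi_{ss}\Pi(X^\perp)\rangle$ using (\ref{eq:d2PiNT})–(\ref{eq:d2PiNN}). Crucially, I then replace $X^\perp=2\phi-2A_{ii}$ so that terms involving $\phi$ get collected separately from those involving $A$. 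The first variations $(g^{ij})_s,(A_{ij})_s$ in the cross-term come from Proposition \ref{prop:1stvar}, and the second variations $(g^{ij})_{ss},(A_{ij})_{ss}$ from the preceding proposition. Any tangential derivatives of $V$ (e.g.\ arising from $\nabla^\perp_{X_{ij}^T}V$ style pieces inside $\Pi_{ss}$) are eliminated by Lemma \ref{lem:dV}.

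One then collects by type. Terms of schematic form $X_k\langle \phi,\nabla^\perp V\rangle$ and $\langle \phi,\nabla^\perp V\rangle\nabla^\perp V$ come out of the $X^\perp\mapsto 2\phi$ substitution and form the first line of (\ref{eq:d2phi}). The $X_k$-tangential contributions that do not involve $\phi$ explicitly must assemble into the single combination $X_k\langle LV,\nabla_k^\perp V\rangle$; this is where one uses the definition $L=\mathcal{L}+\tfrac{1}{2}+\sum\langle\cdot,A_{kl}\rangle A_{kl}$ to recognize that the $\nabla^\perp\nabla^\perp V$-contribution from $(A_{ij})_{ss}$, the $\tfrac{1}{2}V$-contribution coming from $X^\perp=2\phi-2A_{ii}$ inside $\Pi_{ss}(X^\perp)$, and the Simons-type $\langle A_{kl},V\rangle A_{kl}$-contribution add up to exactly $LV$. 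The remaining normal pieces organize into the explicit $A A^V A^V$, $A\langle\nabla^\perp V,\nabla^\perp V\rangle$, $A^V\nabla^\perp\nabla^\perp V$ and $\langle A,\nabla^\perp V\rangle\nabla^\perp V$ terms.

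The hard part is entirely bookkeeping: essentially every summand in (\ref{eq:d2phi}) arises as a sum of contributions from two or three of the five pieces above, and one must track signs and index positions carefully. The two structural tricks that make the combination succeed are (i) the substitution $X^\perp=2\phi-2A_{ii}$, which both produces the $\phi$-multilinear terms and contributes the missing $V$-piece of $LV$, and (ii) systematic use of Lemma \ref{lem:dV} to rewrite tangential derivatives as $A^V$ contractions against the frame so that tangential and normal components can be separated cleanly. Given that the formula is purely algebraic in local quantities, once these two reductions are applied the matching with (\ref{eq:d2phi}) is mechanical.
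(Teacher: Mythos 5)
Your overall strategy—differentiate $\phi=\tfrac12 X^\perp-\mathbf{H}$ twice, substitute the first- and second-variation formulae for $\Pi$, $g^{ij}$, $A_{ij}$, eliminate tangential derivatives via Lemma~\ref{lem:dV}, and collect—is exactly the paper's. The decomposition of $\Pi_{ss}(X)$ into tangential and normal pieces via (\ref{eq:d2PiTan})--(\ref{eq:d2PiNN}) is also what the paper does. But your account of how the $\phi$-dependent terms emerge has a genuine error, and it is the one piece of the bookkeeping that is not mechanical.

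You claim that the substitution $X^\perp\mapsto 2\phi-2A_{ii}$ ``produces the $\phi$-multilinear terms and form[s] the first line of (\ref{eq:d2phi}).'' But the first line is $\langle\nabla_k^\perp\phi,V\rangle\nabla_k^\perp V - \langle\phi,\nabla_k^\perp V\rangle\nabla_k^\perp V$, and the term $\langle\nabla_k^\perp\phi,V\rangle\nabla_k^\perp V$ cannot arise from replacing $X^\perp$ by $2\phi+2\mathbf{H}$: that substitution only produces contractions of $\phi$ itself, not of its covariant derivative. In the raw expansion this term sits as the pair $-\tfrac12\langle X,X_j\rangle A^V_{jk}\nabla_k^\perp V$ (tangential, from $\Pi_{ss}$) and $-\langle\nabla_k^\perp\mathbf{H},V\rangle\nabla_k^\perp V$ (from the Codazzi contraction in $g^{ij}(A_{ij})_{ss}$), and recognizing their sum as $\langle\nabla_k^\perp\phi,V\rangle\nabla_k^\perp V$ requires the auxiliary identity
\[
\nabla_k^\perp\phi=-\tfrac12\langle X,X_j\rangle A_{jk}-\nabla_k^\perp\mathbf{H},
\]
obtained by differentiating $X^\perp=X-g^{ij}\langle X,X_i\rangle X_j$. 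This identity is absent from your plan, and without it the collection step cannot terminate in the stated form. Relatedly, the schematic type $X_k\langle\phi,\nabla^\perp V\rangle$ you assign to the first line actually belongs to the last line of (\ref{eq:d2phi}).

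Two smaller misattributions that you should also fix: the $\tfrac12 V$-contribution to $LV$ comes from $\Pi_s(V)=-X_k\langle\nabla_k^\perp V,V\rangle$ in the $2\Pi_s(X_s)$ piece of $(X^\perp)_{ss}$, not from ``$X^\perp=2\phi-2A_{ii}$ inside $\Pi_{ss}(X^\perp)$''; and the drift term $-\tfrac12\nabla^\perp_{x^T}V$ of $\mathcal{L}V$ comes from the $X_k\langle\nabla_k^\perp V,\nabla^\perp_{X^T}V\rangle$ piece of $\Pi_{ss}(X^T)$, which you do not mention among the contributions that assemble $LV$. With the $\nabla^\perp\phi$ identity added and these sources corrected, the plan becomes a faithful account of the paper's proof; as written it would not produce (\ref{eq:d2phi}).
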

\begin{proof}
First, differentiating $\mathbf{H} = - g^{ij} A_{ij}$ twice we have \[-\mathbf{H}_{ss} = (g^{ij})_{ss} A_{ij} + 2(g^{ij})_s (A_{ij})_s  + g^{ij} (A_{ij})_{ss}. \]

Now by Proposition \ref{prop:1stvar} and (\ref{eq:d2ginv}), 
 \[(g^{ij})_{ss} A_{ij} =6A_{ij} A^V_{ik} A^V_{kj} - 2A_{ij} \langle \nabla^\perp_i V, \nabla_j^\perp V\rangle,\] 
\[(g^{ij})_s (A_{ij})_s = - 2 X_k A^V_{ij} \langle A_{ij}, \nabla^\perp_k V\rangle + 2A^V_{ij} (\nabla^\perp\nabla^\perp V)_{ij} -2A^V_{ij} A^V_{ik} A_{jk}.\]
Finally, using (\ref{eq:d2A}), and the Codazzi equation to rearrange indices, we have  
\begin{equation}
\begin{split}
\frac{1}{2}g^{ij}(A_{ij})_{ss} =& - X_k \langle \Lap^\perp V ,\nabla^\perp_k V\rangle + X_k  A^V_{ij} \langle A_{ij} , \nabla_k^\perp V\rangle \\&+  \left(2\langle A_{ik}, \nabla_i^\perp V\rangle - \langle \nabla_k^\perp \mathbf{H},  V\rangle  \right)\nabla_k^\perp V 
\\&  + X_k A^V_{ki} \langle \mathbf{H}, \nabla^\perp_i V\rangle +  \langle \mathbf{H}, \nabla^\perp_k V\rangle \nabla^\perp_k V.
\end{split}
\end{equation}

Adding these together gives \begin{equation}
\label{eq:d2H}
\begin{split}
 -\mathbf{H}_{ss} = {}& 2A_{ij} A^V_{ik} A^V_{kj} - 2A_{ij} \langle \nabla^\perp_i V, \nabla_j^\perp V\rangle + 4A^V_{ij} (\nabla^\perp\nabla^\perp V)_{ij} 
 \\& +\left(4\langle A_{ik}, \nabla_i^\perp V\rangle - 2\langle \nabla_k^\perp \mathbf{H},  V\rangle +2 \langle \mathbf{H}, \nabla^\perp_k V\rangle  \right)\nabla_k^\perp V 
 \\&  + X_k\left( - 2A^V_{ij}  \langle A_{ij}, \nabla^\perp_k V\rangle -2\langle \Lap^\perp V ,\nabla^\perp_k V\rangle  +2A^V_{ki} \langle \mathbf{H}, \nabla^\perp_i V\rangle \right) 
 .
 \end{split}
\end{equation}

Differentiating $X^\perp = \Pi(X)$ twice and using $X_{ss}=0$ we have $(X^\perp)_{ss} = \Pi_{ss}(X) + 2\Pi_s(X_s)$. Then by Lemmas \ref{lem:d2Pi} and \ref{lem:dV}, we have
\[\begin{split}
 \frac{1}{2}\Pi_{ss}(X)={}&  \nabla^\perp_{\nabla^\top_{X^\top}V}V + X_k \langle \nabla^\perp_k V, \nabla^\perp_{X^\top} V\rangle  + X_k \langle X^\perp, \nabla^\perp_{\nabla^\top_{k} V} V\rangle - \langle X^\perp, \nabla^\perp_k V\rangle \nabla^\perp_k V
 \\=& - A^V_{jk} \langle X, X_j\rangle \nabla^\perp_k V + X_k \langle \nabla^\perp_k V, \nabla^\perp_{X^\top} V\rangle - X_k A^V_{jk} \langle X^\perp, \nabla^\perp_j V\rangle- \langle X^\perp, \nabla^\perp_k V\rangle \nabla^\perp_k V. 
\end{split}\]
 By Proposition \ref{prop:1stvar} we have
$\Pi_s(X_s) = -X_k \langle \nabla^\perp_k V, V\rangle.$ Adding this then gives
\begin{equation}
\label{eq:d2X}
\begin{split}
 \frac{1}{2}(x^\perp)_{ss} =&  - A^V_{jk} \langle X, X_j\rangle \nabla^\perp_k V - \langle X^\perp, \nabla^\perp_k V\rangle \nabla^\perp_k V
  \\& + X_k \langle \nabla^\perp_k V, \nabla^\perp_{X^\top} V\rangle - X_k A^V_{jk} \langle X^\perp, \nabla^\perp_j V\rangle-X_k \langle \nabla^\perp_k V, V\rangle
 .
 \end{split}
\end{equation}

Finally, differentiating $X^\perp=\Pi(X) = X - g^{ij} \langle X, X_i\rangle X_j$, we find $\nabla_k^\perp X^\perp = - g^{ij}\langle X, X_i\rangle A_{jk}$. Therefore $\nabla_k^\perp \phi = -\frac{1}{2}\langle X,X_j\rangle A_{jk}  -\nabla_k^\perp \mathbf{H}$. Adding (\ref{eq:d2H}) and (\ref{eq:d2X}) then gives the result. 
\end{proof}

\begin{corollary}
\label{cor:d2phiN}
Suppose that $\Sigma_0$ is a shrinker, that is, at $s=0$ we have $\phi \equiv 0$; then we have \begin{equation}
\begin{split}
\frac{1}{2}\phi_{ss} = \frac{1}{2}\mathcal{D}^2\mathcal{\varphi}(V,V) ={}&   A_{ij}A^V_{ik} A^V_{kj}  - A_{ij}\langle \nabla^\perp_i V, \nabla^\perp_j V\rangle \\&+ 2A^V_{ij} (\nabla^\perp\nabla^\perp V)_{ij} 
 +2\langle A_{ij}, \nabla_i^\perp V\rangle \nabla_j^\perp V 
 \\& - X_k \langle LV, \nabla^\perp_k V\rangle. 
\end{split}
\end{equation}
\end{corollary}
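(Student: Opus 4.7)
The corollary is a direct specialization of the second variation formula (\ref{eq:d2phi}) to the case where the base submanifold $\Sigma_0$ is a shrinker, i.e.\ $\phi\equiv 0$ at $s=0$. The hypotheses $V = V^\perp$ and $X_{ss}=0$ are inherited from the preceding proposition. My plan is therefore purely bookkeeping: locate every term in (\ref{eq:d2phi}) that contains a factor of $\phi$ or $\nabla^\perp\phi$, explain why each such term vanishes under the shrinker hypothesis, and record what remains.

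First I will identify the three $\phi$-dependent terms on the right-hand side of (\ref{eq:d2phi}): the pair $\langle \nabla_k^\perp \phi, V\rangle \nabla^\perp_k V - \langle \phi, \nabla_k^\perp V\rangle \nabla^\perp_k V$ appearing on the first line, and the term $X_k A^V_{ik}\langle \phi, \nabla^\perp_i V\rangle$ on the last line. The two terms in which $\phi$ (not its derivative) is paired against $V$ or $\nabla^\perp V$ are pointwise zero since $\phi\equiv 0$ on $\Sigma_0$.

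Next I will argue that the remaining term $\langle \nabla_k^\perp \phi, V\rangle \nabla^\perp_k V$ also vanishes. Since $\phi$ is identically the zero section of the normal bundle along $\Sigma_0$, its ambient derivative in any tangent direction is zero, and hence so is its normal covariant derivative $\nabla^\perp\phi$; this follows from the Leibniz rule for $\nabla^\perp$ applied to the constantly-zero section. Consequently $\langle \nabla_k^\perp \phi, V\rangle \equiv 0$.

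Deleting the three vanishing terms from (\ref{eq:d2phi}) leaves exactly
\[
\tfrac{1}{2}\phi_{ss}
= A_{ij}A^V_{ik}A^V_{kj} - A_{ij}\langle \nabla^\perp_i V,\nabla^\perp_j V\rangle + 2A^V_{ij}(\nabla^\perp\nabla^\perp V)_{ij} + 2\langle A_{ij},\nabla_i^\perp V\rangle \nabla_j^\perp V - X_k\langle LV,\nabla^\perp_k V\rangle,
\]
which is the stated identity. The only point requiring care is confirming that no occurrence of $\phi$ has been overlooked in the rather long formula (\ref{eq:d2phi}); there is no analytic or geometric obstacle here.
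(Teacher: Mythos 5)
Your proof is correct and matches the paper's (implicit) argument: Corollary \ref{cor:d2phiN} is obtained from the general second variation formula (\ref{eq:d2phi}) simply by observing that the three $\phi$-dependent terms — $\langle \nabla_k^\perp \phi, V\rangle \nabla^\perp_k V$, $-\langle \phi, \nabla_k^\perp V\rangle \nabla^\perp_k V$, and $X_k A^V_{ik}\langle \phi,\nabla^\perp_i V\rangle$ — all vanish when $\phi\equiv 0$, since the covariant derivative of the identically-zero section is itself zero. The paper states the corollary without a written proof for exactly this reason, and your bookkeeping is complete and accurate.
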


\begin{remark}
Ultimately, only the normal part of $\mathcal{D}^2\mathcal{\varphi}(V,V)$ is geometrically significant; it would have been enough to take the normal projection in the above second variation formula and in all later analysis. 
\end{remark}

\section{Analysis on a cylinder}
\label{sec:analysis}

In this section we specialise to generalised shrinking cylinders $\Gamma = \mathring{\Gamma}^k\times \mathbb{R}^{n-k}\subset \mathbb{R}^N$ satisfying (A1-A2). We describe the constraint posed by the first variation, or equivalently the space of Jacobi fields, and then show that the second variation completes the formal obstruction, in particular, it has a definite sign on Jacobi variations. 

The normal space has the orthogonal decomposition $N_{(p,y)}\Gamma = N_p\mathring{\Gamma} \oplus \spa\{\pr_{\alpha}\}$. Here $N\mathring{\Gamma}$ is the normal bundle of $\mathring{\Gamma}^k$ in $\mathbb{R}^{k+m}\hookrightarrow \mathbb{R}^N$. We correspondingly decompose $V \in N\Gamma$ as $V = \mathring{V} + \sum_\alpha v^\alpha \pr_{z_\alpha}$, where $\mathring{V}$ is the projection of $V\in N\Gamma$ to $N\mathring{\Gamma}$. Similarly, the second fundamental form on $\Gamma$ is $A(Y,Z) = \mathring{A}(\mathring{Y}, \mathring{Z})$, where $\mathring{A}=A_{\mathring{\Gamma}}$. In particular $\mathbf{H}(p,y)=\mathring{\mathbf{H}}(p)$.

It will be convenient to factor the Gaussian weight on $\Gamma$ as $\rho = \mathring{\rho}\bar{\rho}$, where the Gaussians on $\mathring{\Gamma}$, $\mathbb{R}^{n-k}$ may be written explicitly as  $\mathring{\rho} = (4\pi)^{-k/2} \e^{-|\mathring{x}|^2/4}$, $\bar{\rho} = (4\pi)^{-(n-k)/2} \e^{-|y|^2/4}$. 

\subsection{First variation of $\phi$}
\label{sec:analysis1}

Since $\Gamma$ is a shrinker, the first variation of $\phi$ is just given by the Jacobi operator $\mathcal{D\phi}(V)=LV$. The goal for this subsection is to describe the Jacobi fields on $\Gamma$, that is, the space $\mathcal{K}:= \ker L = \{ V\in W^{2,2}(N\Gamma) | LV=0\}$. Let $\pi_\mathcal{K}$ be the $L^2$ projection to $\mathcal{K}$ and $\pi_{\mathcal{K}^\perp}$ the projection to its orthocomplement $\mathcal{K}^\perp$ in $L^2$. 

We first record some results from functional analysis:

\begin{lemma}
\label{lem:spectral-decomp}
Let $T$ be either operator $\mathcal{L}$ (acting on functions or normal fields) or $L$ on $\Gamma \in \mathcal{C}_n(\mathring{\Gamma})$. Then the operator $T$ is symmetric on $W^{2,2}$, the space $W^{1,2}$ embeds compactly into $L^2$, and $T$ has discrete spectrum with finite multiplicity on $W^{2,2}$ and a complete basis of smooth $L^2$-orthonormal eigenfunctions. 
\end{lemma}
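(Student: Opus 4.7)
The plan is to establish the three assertions in sequence. The main technical step is the compact embedding in the weighted Sobolev setting; symmetry is a routine consequence of the divergence form of $\mathcal{L}$, and the spectral conclusions then follow from the standard theory of self-adjoint operators with compact resolvent.

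\textbf{Symmetry.} On the submanifold $\Gamma$, the Gaussian weight satisfies $\nabla^\Gamma \rho = -\frac{1}{2} x^T \rho$, so that $\mathcal{L}$ admits the divergence form $\mathcal{L} u = \rho^{-1} \operatorname{div}_\Gamma(\rho \nabla u)$, with $\nabla^\perp$ replacing $\nabla$ in the vector-field case. For $u, v \in W^{2,2}$, approximation by cutoff-supported fields together with the Gaussian decay (and Lemma \ref{lem:poincare} to bound weighted tails) kills the boundary terms in integration by parts and yields $\int \langle u, \mathcal{L} v\rangle \rho = -\int \langle \nabla u, \nabla v\rangle \rho$; hence $\mathcal{L}$ is symmetric on $W^{2,2}$. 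Since $L$ differs from $\mathcal{L}$ by the symmetric zeroth-order term $\frac{1}{2} + \sum_{k,l}\langle \cdot, A_{kl}\rangle A_{kl}$ (manifestly symmetric under the pairing $(V,W) \mapsto \langle V, W\rangle$), $L$ is symmetric as well.

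\textbf{Compact embedding.} Given a bounded sequence $\{u_j\} \subset W^{1,2}$, Lemma \ref{lem:poincare} gives the uniform tail bound
\[ \int_{\Gamma \setminus B_R} |u_j|^2 \rho \leq R^{-2}\int_\Gamma |x|^2 |u_j|^2 \rho \leq C R^{-2}\|u_j\|_{W^{1,2}}^2. \]
On each slab $\Gamma \cap B_R$ -- a relatively compact subset of the smooth manifold $\Gamma$ on which $\rho$ is bounded above and below -- classical Rellich--Kondrachov produces an $L^2$-convergent subsequence. A diagonal extraction along $R = 1, 2, \ldots$ combined with the tail bound then yields a subsequence Cauchy in the weighted $L^2(\Gamma)$. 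This is the main obstacle, since $\Gamma$ is noncompact along the $\mathbb{R}^{n-k}$ factor; Lemma \ref{lem:poincare} is precisely the tool that trades the Gaussian decay for compactness. The same argument handles normal vector fields, since the bundle $N\Gamma$ is smooth and of finite rank.

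\textbf{Spectral theory and smoothness.} After shifting by a suitable constant $c > 0$, the operator $-T + cI$ is coercive on $W^{1,2}$: for $T = \mathcal{L}$, integration by parts gives $\langle (-\mathcal{L} + I)u, u\rangle_{L^2_\rho} = \int (|\nabla u|^2 + u^2)\rho = \|u\|_{W^{1,2}}^2$; for $T = L$, the additional terms $\frac{1}{2} + |A|^2$ are bounded on $\Gamma$ (since $A = \mathring{A}$ is determined by the compact cross-section), so large enough $c$ yields the same bound. Lax--Milgram then provides a bounded inverse $R_c := (-T + cI)^{-1}: L^2 \to W^{1,2}$; composing with the compact embedding gives a compact self-adjoint operator on $L^2$. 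The spectral theorem yields a complete $L^2$-orthonormal basis of eigenfunctions of $R_c$ with eigenvalues of finite multiplicity accumulating only at $0$, which translates to discrete spectrum of $T$ of finite multiplicity accumulating only at $-\infty$. Finally, each eigenfunction satisfies the elliptic equation $Tu = \lambda u$ on the smooth manifold $\Gamma$ with smooth coefficients, so bootstrapping interior Schauder regularity gives $u \in C^\infty(\Gamma)$.
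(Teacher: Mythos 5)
Your proof is correct; the difference from the paper is one of presentation rather than mathematical content. The paper dispatches this lemma in two lines by appealing to \cite[Lemma 3.2]{CM15} together with the general Bakry--\'Emery framework (\cite{BE85}, \cite{Gri09}): since the cylinder $\Gamma = \mathring\Gamma\times\mathbb{R}^{n-k}$ with Gaussian weight has a uniform lower bound on its Bakry--\'Emery--Ricci curvature and finite weighted volume, the compact embedding and discreteness of spectrum come for free from that theory, and then the bounded zeroth-order terms ($\tfrac12 + |A|^2$, uniformly bounded because $A = \mathring A$ depends only on the compact cross-section) do not disturb the conclusion for $L$. You instead produce a self-contained proof: symmetry via the divergence form $\mathcal{L}u = \rho^{-1}\operatorname{div}_\Gamma(\rho\nabla u)$, compactness via the uniform Gaussian tail estimate $\int_{\Gamma\setminus B_R}|u|^2\rho \lesssim R^{-2}\|u\|_{W^{1,2}}^2$ (which is exactly Lemma \ref{lem:poincare}, also applicable to vector fields) combined with local Rellich--Kondrachov and a diagonal extraction, and then the standard Lax--Milgram/compact-resolvent/spectral-theorem route plus interior elliptic bootstrap for smoothness. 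Your route is essentially the argument that underlies \cite[Lemma 3.2]{CM15}, unpacked; it buys transparency and avoids the black box, at the cost of length. One small point worth flagging: when you invoke Lax--Milgram to build $R_c$, the self-adjointness of $R_c$ on $L^2$ should be noted explicitly as a consequence of the symmetry of the bilinear form (you establish the symmetry earlier, so this is a one-line remark), so that the spectral theorem for compact self-adjoint operators applies.
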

\begin{proof}
As in \cite[Lemma 3.2]{CM15}, the lemma follows from integration by parts, \cite{BE85} and \cite{Gri09}, noting that $\Gamma$ has a finite lower bound for its Bakry-\'{E}mery-Ricci curvature, and finite weighted volume. The results for $L$ follow since $A$ is also uniformly bounded on $\Gamma$. 
\end{proof}

We will need the following elliptic estimate. For convenience, define $|V|_{m} =\sum_{j\leq m} |\nabla^j V|$; then we have

\begin{lemma}
\label{lem:elliptic-est}
Fix a shrinker $\Gamma = \mathring{\Gamma}\times \mathbb{R}^{n-k}$. Then the following hold:
\begin{enumerate}[(a)]
\item Given $m\in \mathbb{N}$, there exists $C$ so that if $\int_\Gamma \langle x\rangle^{2m} |V|^2_2\rho <\infty$, then
\begin{equation}
\label{eq:elliptic-est}
\int_\Gamma \langle x\rangle^{2m} |V|_2^2 \rho  \leq C  \|V\|_{L^2}^2  + C\int_\Gamma \langle x\rangle ^{2m} |LV|^2\rho.  
\end{equation}
%Here the middle term on the first line is understood to be zero if $m=0$. 
\item There exists $C_0$ so that if $V\in W^{2,2}$ then $\|\pi_{\mathcal{K}^\perp}(V)\|_{W^{2,2}} \leq C_0 \|LV\|_{L^2}$. 
\end{enumerate}
\end{lemma}

\begin{proof}
Note that $L$ only differs from $\mathcal{L}$ by (zeroth order) curvature terms, which are uniformly bounded on the cylinder $\Gamma$. Therefore, we may freely use that $|\mathcal{L}V- LV| \leq C|V|$. 

First, integrate by parts to find that \[ \int_\Gamma |x|^{2m} |\nabla^\perp V|^2 \rho = - \int_\Gamma | x|^{2m} \langle V, \mathcal{L}V\rangle \rho - 2m \int_\Gamma |x|^{2m-2} \langle V, \nabla^\perp_{x^\top} V\rangle \rho.\] Using the absorbing inequality on the last term, it follows that \begin{equation}\label{eq:elliptic-est1}\begin{split}\int_\Gamma \langle x\rangle^{2m} |\nabla^\perp V|^2 \rho  &\leq  2 \int_\Gamma \langle x\rangle ^{2m} |\mathcal{L}V|^2\rho + C \int_\Gamma \langle x\rangle^{2m} |V| ^2 \rho\\&\leq 4 \int_\Gamma \langle x\rangle ^{2m} |LV|^2\rho + C' \int_\Gamma \langle x\rangle^{2m} |V| ^2 \rho.\end{split}\end{equation}

We proceed to estimate $\int_\Gamma \langle x\rangle^{2m} |\nabla^\perp\nabla^\perp V|^2\rho$. Since $\Gamma = \mathring{\Gamma}\times \mathbb{R}^{n-k}$, the Ricci curvature of the normal bundle $N\Gamma$ is given by $\Ric_\Gamma^\perp(Y,Z) V= \Ric_{\mathring{\Gamma}}^\perp ( \mathring{Y},\mathring{Z}) \mathring{V}$, where $\mathring{Y}(p),\mathring{Z}(p)$ are the projections to $T_p\mathring{\Gamma}$ and $\mathring{V}(p)$ is the projection to $N_p\mathring{\Gamma}$. In particular $\Ric^\perp$ and $\nabla^\perp \Ric^\perp$ are uniformly bounded, and the drift Bochner inequality \begin{equation}\label{eq:bochner}|\nabla^\perp\nabla^\perp V|^2\leq \frac{1}{2}\mathcal{L}|\nabla^\perp V|^2 - \langle \nabla^\perp \mathcal{L} V,\nabla^\perp V\rangle + K|\nabla^\perp V|^2 + K|V|^2\end{equation} holds for some $K<\infty$. On the other hand, integrating by parts we have \begin{equation}\label{eq:elliptic-est-2a}  \int_\Gamma |x|^{2m}\mathcal{L}|\nabla^\perp V|^2 \rho = - 2m \int_\Gamma |x|^{2m-2} \langle x^\top , \nabla |\nabla^\perp V|^2 \rangle \rho \leq C m \int_\Gamma \langle x \rangle^{2m-1} |\nabla^\perp V||\nabla^\perp\nabla^\perp V| \rho.\end{equation} %where of course we understand the right hand side is zero if $m=0$. 
Similarly, by direct calculation we have $\mathcal{L}|x|^{2m} \leq C(m)\langle x\rangle^{2m}$, so
\begin{equation}
\label{eq:elliptic-est-2b}
\begin{split}
 -\int_\Gamma |x|^{2m}\langle \nabla^\perp \mathcal{L} V,\nabla^\perp V\rangle\rho &= \int_\Gamma \langle \mathcal{L} V, \mathcal{L}(|x|^{2m}V)\rangle\rho 
 \\&\leq C\int_\Gamma |x|^{2m} |\mathcal{L}V|^2 + C\int_\Gamma \langle x\rangle^{2m-2}|\nabla^\perp V|^2\rho + C\int_\Gamma \langle x\rangle^{2m} |V|^2 \rho .
 \end{split}
 \end{equation}

Integrating (\ref{eq:bochner}), using (\ref{eq:elliptic-est-2b}) and the absorbing inequality on (\ref{eq:elliptic-est-2a}), it follows that 
\[ \int_\Gamma \langle x\rangle^{2m} |\nabla^\perp\nabla^\perp V|^2 \rho \leq C\int_\Gamma \langle x\rangle ^{2m} |\mathcal{L}V|^2\rho + C\int_\Gamma \langle x\rangle^{2m} |\nabla^\perp V|^2\rho+C\int_\Gamma \langle x\rangle^{2m} |V|^2\rho .\] 
The second last term was already estimated by (\ref{eq:elliptic-est1}), so this yields the estimate 
\begin{equation}
\label{eq:elliptic-est2}
\begin{split}
\int_\Gamma \langle x\rangle^{2m} |V|_2^2 \rho  &\leq C   \int_\Gamma \langle x\rangle^{2m} |V| ^2 \rho + C\int_\Gamma \langle x\rangle ^{2m} |\mathcal{L}V|^2\rho
\\&\leq C'  \int_\Gamma \langle x\rangle^{2m} |V| ^2 \rho + C\int_\Gamma \langle x\rangle ^{2m} |LV|^2\rho.
\end{split}
\end{equation}

This already establishes (\ref{eq:elliptic-est}) for $m=0$. We proceed by induction on $m$, so suppose that 
\begin{equation} 
\label{eq:elliptic-est3}
\int_\Gamma \langle x\rangle^{2m-2} |V|_2^2 \rho  \leq C  \|V\|_{L^2}^2  + C\int_\Gamma \langle x\rangle ^{2m-2} |LV|^2\rho. 
\end{equation} 
On the other hand, it follows from Lemma \ref{lem:poincare} that
\begin{equation}
\int_\Gamma \langle x\rangle^{2m} |V| ^2 \rho \leq \int_\Gamma \langle x\rangle^{2m-2} |V| ^2 \rho + \int_\Gamma \langle x\rangle^{2m-2} |\nabla^\perp V| ^2 \rho.
\end{equation}
Using (\ref{eq:elliptic-est1}) for the last term and then applying the inductive hypothesis (\ref{eq:elliptic-est3}) gives that 
\begin{equation}
\int_\Gamma \langle x\rangle^{2m} |V| ^2 \rho \leq C  \|V\|_{L^2}^2  + C\int_\Gamma \langle x\rangle ^{2m} |LV|^2\rho. 
\end{equation} 
Substituting this into (\ref{eq:elliptic-est2}) completes the induction and establishes (\ref{eq:elliptic-est}) for all $m$. 

For item (b), Lemma \ref{lem:spectral-decomp} for $L$ implies that $\|\pi_{\mathcal{K}^\perp}(V)\|_{L^2} \leq \Lambda \|LV\|_{L^2}$ for some $\Lambda$. Applying the $m=0$ case of (a) then gives \[\|\pi_{\mathcal{K}^\perp}(V)\|_{W^{2,2}} \leq C_0(\|\pi_{\mathcal{K}^\perp}(V)\|_{L^2} + \|LV\|_{L^2}) \leq C_0 (1+\Lambda) \|LV\|_{L^2}\]
\end{proof}

Recall that, acting on functions, the ($L^2$) spectrum of $\bar{\mathcal{L}}$ on Euclidean space consists of the nonnegative half-integers; moreover $ \ker(\bar{\mathcal{L}} +\frac{1}{2}) = \spa \{y_i\}$ and $  \ker(\bar{\mathcal{L}}+1) = \spa\{ y_iy_j-2\delta_{ij}\}$. 

\begin{proposition}
\label{prop:jacobi-fields}
The space $\mathcal{K}$ of ($W^{2,2}$) Jacobi fields on $\Gamma = \mathring{\Gamma} \times \mathbb{R}^{n-k}$ is spanned by elements of the following forms:
\begin{enumerate}
\item $\mathring{V}(p) f(y)$, where $\mathring{V}\in \ker (\mathring{L}-\frac{b}{2})$, $f \in \ker(\bar{\mathcal{L}}+\frac{b}{2})$, $b\in \mathbb{N}$; 
 \item $y_i \pr_{z_\alpha}$; 
 \item $v(p)\pr_{z_\alpha}$, where $v\in\ker(\mathring{\mathcal{L}} + \frac{1}{2})$.
 \end{enumerate}
\end{proposition}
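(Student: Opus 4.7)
My plan is separation of variables on the product $\Gamma = \mathring{\Gamma} \times \mathbb{R}^{n-k}$, exploiting the fact that both the Jacobi operator $L$ and the normal bundle $N\Gamma$ respect the product structure. First I would decompose any normal field as $V = \mathring{V} + \sum_\alpha v^\alpha \partial_{z_\alpha}$, where $\mathring{V}$ takes values in $N\mathring{\Gamma}$ (parallel-transported along $y$) and $v^\alpha$ are scalar functions. Since each $\partial_{z_\alpha}$ is a parallel section of $N\Gamma$ orthogonal to the image of $A$ (which lies in $N\mathring{\Gamma}$), and since $\mathcal{L} = \mathring{\mathcal{L}} + \bar{\mathcal{L}}$ on the product, a short computation yields
\[ L\mathring{V} = (\mathring{L} + \bar{\mathcal{L}})\mathring{V}, \qquad L(v^\alpha \partial_{z_\alpha}) = \bigl((\mathcal{L} + \tfrac{1}{2}) v^\alpha\bigr) \partial_{z_\alpha}.\]
In particular the decomposition is $L$-invariant and the equation $LV=0$ decouples into two independent subsystems.

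Next I would separate variables in each subsystem. The preceding lemma supplies complete orthonormal bases $\{W_i\}$, $\{w_i\}$ of $\mathring{L}$-eigensections on $N\mathring{\Gamma}$ and $\mathring{\mathcal{L}}$-eigenfunctions on $\mathring{\Gamma}$ respectively; the Hermite polynomials $\{f_j\}$ give a complete orthonormal basis of $\bar{\mathcal{L}}$-eigenfunctions on $(\mathbb{R}^{n-k}, \rho_{n-k})$, with $\spec(-\bar{\mathcal{L}}) = \tfrac{1}{2}\mathbb{N}_0$. Because $\rho_n|_\Gamma = \rho_k\, \rho_{n-k}$ under the product coordinates, tensoring produces complete orthonormal bases of each weighted $L^2$-space, and we may expand $\mathring{V} = \sum c_{ij} W_i(p) f_j(y)$ and $v^\alpha = \sum c^\alpha_{ij} w_i(p) f_j(y)$. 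The operator then acts diagonally with scalar multiplier $\lambda_i - \nu_j$ (resp.\ $\tfrac{1}{2} - \mu_i - \nu_j$), where $\lambda_i,\mu_i,\nu_j$ denote the respective eigenvalues, so membership in the kernel forces the coefficient to vanish whenever the multiplier is nonzero.

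It remains to read off the admissible modes. For the $\mathring{V}$-equation, $\lambda_i = \nu_j \in \tfrac{1}{2}\mathbb{N}_0$, and setting $\lambda_i = b/2$ with $b \in \mathbb{N}$ is precisely the content of type (1). For the $v^\alpha$-equation, the only nonnegative solutions to $\mu_i + \nu_j = \tfrac{1}{2}$ are $(0,\tfrac{1}{2})$ and $(\tfrac{1}{2},0)$: the former forces $w_i$ constant and $f_j \in \ker(\bar{\mathcal{L}}+\tfrac{1}{2}) = \spa\{y_l\}$, producing type (2); the latter combined with (A1) forces $w_i \in \ker(\mathring{\mathcal{L}}+\tfrac{1}{2})$ and $f_j$ constant, producing type (3). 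Summing over $\alpha$ and all modes gives the asserted spanning set.

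The main subtle point is ensuring that the spanning holds at the $W^{2,2}$-level rather than merely in $L^2$-closure. This follows from the weighted elliptic estimate of Lemma~\ref{lem:elliptic-est}, which controls $\|\cdot\|_{W^{2,2}}$ in terms of $\|\mathcal{L}\cdot\|_{L^2}$, combined with the Fubini identity $\rho_n|_\Gamma = \rho_k\, \rho_{n-k}$ and Parseval on each factor; together these ensure any $W^{2,2}$-Jacobi field is supported on the (finitely many, by (A1-A2)) listed product modes.
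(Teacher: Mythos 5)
Your proof is correct and follows essentially the same separation-of-variables strategy as the paper: decompose $N\Gamma$ into the $N\mathring{\Gamma}$ and $\pr_{z_\alpha}$ pieces, observe that $L$ acts as $\mathring{L}+\bar{\mathcal{L}}$ and $\mathring{\mathcal{L}}+\bar{\mathcal{L}}+\frac{1}{2}$ on the respective components, and use $\spec(-\bar{\mathcal{L}})=\frac{1}{2}\mathbb{N}_0$ to constrain the admissible modes. The paper expands only in the cross-sectional eigenbasis and then reads off the $\bar{\mathcal{L}}$-eigenvalue that each resulting coefficient function $u^j(y)$ must satisfy, whereas you use a full tensor-product Hermite basis --- equivalent bookkeeping --- and the one small slip is attributing the reduction to the listed (finitely many) modes to (A1)--(A2), which are not used in this proposition and only enter in Corollary~\ref{cor:jacobimodrot} to identify the kernels explicitly.
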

\begin{proof}
Using the cylindrical structure of $\Gamma$, the Jacobi operator acts on normal vector fields $V = \mathring{V} + \sum_\alpha v^\alpha \pr_{z_\alpha}$ by $L\mathring{V} = \mathring{L} \mathring{V} + \bar{\mathcal{L}} \mathring{V}$, where $\bar{\mathcal{L}}$ is the drift Laplacian on the linear factor $\mathbb{R}^{n-k}$, and $L(v^\alpha \pr_{z_\alpha}) = (\mathring{\mathcal{L}}+\bar{\mathcal{L}}+ \frac{1}{2})(v^\alpha) \pr_{z_\alpha}$. 

In particular the operator $L$ respects the decomposition of $N\Gamma$, and we proceed by separation of variables, noting that $\mathring{\mathcal{L}}$ and $\bar{\mathcal{L}}$ commute. 

First we consider the action on functions: Let $\{\psi_j\}$ be an $L^2$-orthonormal eigenbasis of $\mathring{\mathcal{L}}$ on $\mathring{\Gamma}$, so that $\mathring{\mathcal{L}}\psi_j = -\mu_j \psi_j$, $\mu_j \geq 0$. Suppose that $u$ is a $W^{2,2}$ function on $\Gamma$ with $(\mathring{\mathcal{L}} + \bar{\mathcal{L}} + \frac{1}{2})u=0$. Then $u$ is smooth by (local) elliptic regularity, and by fixing $y$ we may write $u(p,y) = \sum_j \psi_j(p) u^j(y)$, where $u^j(y) = \int_{\mathring{\Gamma}} u(\cdot,y) \psi_j \mathring{\rho}$. This series is square summable and hence converges in $W^{2,2}$; in particular each $u^j$ is $W^{2,2}$ on $\mathbb{R}^{n-k}$, and we can calculate \[0=(\mathring{\mathcal{L}} + \bar{\mathcal{L}} + \frac{1}{2})u = \sum_j \psi_j(\bar{\mathcal{L}}-\mu_j +\frac{1}{2})u^j.\] By the $L^2$-orthogonality, each $u_j$ must satisfy $(\bar{\mathcal{L}}-\mu_j +\frac{1}{2})u^j=0$. So by the characterisation of the spectrum of $\bar{\mathcal{L}}$, we must have $-\mu_j + \frac{1}{2} \in \frac{1}{2}\mathbb{N}$. This is only possible if:
\begin{itemize}
\item $\mu_j = 0$, in which case $\psi_j$ is constant and $u^j \in \ker(\bar{\mathcal{L}} + \frac{1}{2}) = \langle y_i\rangle$;
\item $\mu_j = \frac{1}{2}$, in which case $u^j$ must be constant. 
\end{itemize}

The action on normal fields orthogonal to the linear directions is similar: Let $\{\mathring{V}_j\}$ be an $L^2$-orthonormal eigenbasis of $\mathring{L}$ on $N\mathring{\Gamma}$, so that $\mathring{L}\mathring{V}_j = -\lambda_j \mathring{V}_j$. Suppose that $\mathring{V}$ is a $W^{2,2}$ normal vector field on $\Gamma$ such that $\mathring{V}(p,y) \in N_p\mathring{\Gamma}$, and $L\mathring{V} = \mathring{L} \mathring{V} + \bar{\mathcal{L}} \mathring{V}=0$. Then $\mathring{V}$ is smooth, and by fixing $y$ we may write $\mathring{V}(p,y) = \sum_j \mathring{V}_j(p) v^j(y)$, where $v^j(y) = \int_{\mathring{\Gamma}} \langle \mathring{V}(\cdot,y), \mathring{V}_j\rangle \mathring{\rho}$. Again this series is square summable and hence converges in $W^{2,2}$, so each $v^j$ is a $W^{2,2}$ function on $\mathbb{R}^{n-k}$, and we can calculate \[0=Lu = \sum_j (\bar{\mathcal{L}}(v^j\mathring{V}_j) -\lambda_j \mathring{V}_j )=\sum_j \mathring{V}_j (\bar{\mathcal{L}} -\lambda_j ) v^j .\] Then each $v_j$ satisfies $(\bar{\mathcal{L}}-\lambda_j)v^j=0$. Using the characterisation of $\spec \bar{\mathcal{L}}$ again completes the proof.

\end{proof}

It is also convenient to define $\mathcal{K}_1 := \mathcal{K} \cap \mathcal{K}_0^\perp$ to be the $L^2$-orthocomplement of $\mathcal{K}_0$ in $\mathcal{K}$; that is, the space of Jacobi fields orthogonal to rotations. 

\begin{corollary}
\label{cor:jacobimodrot}
Let $\Gamma =\mathring{\Gamma} \times \mathbb{R}^{n-k}$ where $\mathring{\Gamma}$ is a closed shrinker satisfying (A1-A2). Then $\mathcal{K}_1$ is spanned by the normal fields $\{(y_iy_j -2\delta_{ij})\mathbf{H}\}$. 
\end{corollary}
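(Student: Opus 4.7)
The plan is to enumerate $\mathcal{K}$ using Proposition \ref{prop:jacobi-fields}, match each sector against the rotation fields listed in Lemma \ref{lem:rot-fields}, and identify the leftover sector as the span of $(y_iy_j - 2\delta_{ij})\mathbf{H}$. The $L^2$-orthogonality to $\mathcal{K}_0$ will then follow automatically from the separation of variables already used in Proposition \ref{prop:jacobi-fields}, so the work is almost entirely combinatorial.

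First I would restrict case (1) of Proposition \ref{prop:jacobi-fields} using (A2). Since $\bar{\mathcal{L}}$ on $\mathbb{R}^{n-k}$ has spectrum contained in $-\tfrac{1}{2}\mathbb{N}$ and $\mathring{L}$ has only $0, \tfrac{1}{2}, 1$ as half-integer eigenvalues, the parameter $b \in \mathbb{N}$ can only take the values $0,1,2$. For $b=0$ we obtain $\mathring{V} \in \ker \mathring{L}$, i.e.\ the cross-sectional rotations $\mathring{x}_i \pr_{\mathring{x}_j}^\perp - \mathring{x}_j \pr_{\mathring{x}_i}^\perp$, paired with $f$ constant; for $b=1$ we obtain $\mathring{V} = \pr_{\mathring{x}_i}^\perp$ paired with $f \in \spa\{y_j\}$; and for $b=2$ we obtain $\mathring{V} \in \ker(\mathring{L}-1) = \spa\{\mathbf{H}\}$ paired with $f \in \ker(\bar{\mathcal{L}}+1) = \spa\{y_iy_j - 2\delta_{ij}\}$. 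Cases (2) and (3) of Proposition \ref{prop:jacobi-fields}, after using (A1) to identify $\ker(\mathring{\mathcal{L}}+\tfrac{1}{2}) = \spa\{\mathring{x}_i\}$, give $y_j \pr_{z_\alpha}$ and $\mathring{x}_i \pr_{z_\alpha}$ respectively.

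Next I would match this list against Lemma \ref{lem:rot-fields}: the $b=0$ family is type (1), the $b=1$ family is type (2), and the two $z$-valued families together exhaust type (3). Hence $\mathcal{K}_0$ fills every sector of $\mathcal{K}$ except the $b=2$ piece, leaving $\spa\{(y_iy_j - 2\delta_{ij})\mathbf{H}\}$ as the only possibility for $\mathcal{K}_1$.

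Finally, to confirm this span lies in $\mathcal{K}_0^\perp$ in $L^2_\rho(\Gamma)$, I would split pointwise according to the normal bundle decomposition $N\Gamma = N\mathring{\Gamma} \oplus \spa\{\pr_{z_\alpha}\}$: the candidate fields are $N\mathring{\Gamma}$-valued and are therefore pointwise orthogonal to all of Lemma \ref{lem:rot-fields}(3). Against the $N\mathring{\Gamma}$-valued rotations in Lemma \ref{lem:rot-fields}(1)--(2), orthogonality follows from the $y$-separation alone: by self-adjointness of $\bar{\mathcal{L}}$, the eigenspaces $\ker \bar{\mathcal{L}}$, $\ker(\bar{\mathcal{L}} + \tfrac{1}{2})$ and $\ker(\bar{\mathcal{L}}+1)$ are mutually $L^2_\rho$-orthogonal on $\mathbb{R}^{n-k}$, so the $y$-dependence of the $b=2$ family integrates to zero against the constants and linear functions appearing in the $b=0,1$ rotation fields. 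This closes out the identification $\mathcal{K}_1 = \spa\{(y_iy_j - 2\delta_{ij})\mathbf{H}\}$; no step poses a genuine obstacle beyond the bookkeeping matching, which is forced by (A1) and (A2).
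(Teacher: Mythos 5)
Your proposal is correct and follows essentially the same route as the paper: enumerate $\mathcal{K}$ via Proposition \ref{prop:jacobi-fields}, use (A1)--(A2) to match the $b=0,1$ and $z$-valued sectors against Lemma \ref{lem:rot-fields}, isolate the $b=2$ sector, and verify $L^2$-orthogonality to $\mathcal{K}_0$. The only cosmetic difference is that you justify orthogonality to the type-(1) cross-sectional rotations via orthogonality of $\bar{\mathcal{L}}$-eigenspaces in $y$, whereas the paper appeals to distinct eigenspaces of $\mathring{L}$ on the cross-section; both are valid since the inner product factorizes over $\mathring{\Gamma}\times\mathbb{R}^{n-k}$.
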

\begin{proof}
Recall from Lemma \ref{lem:rot-fields} that $\mathcal{K}_0$ is spanned by normal fields of the following types: 
\begin{enumerate}[(i)]
\item $\mathring{x}_i\pr_{\mathring{x}_j}^\perp - \mathring{x}_j\pr_{\mathring{x}_i}^\perp$; 
\item $y_j \pr_{\mathring{x}_i}^\perp$; 
\item $\mathring{x}_i \pr_{z_\alpha}$; and $y_j \pr_{z_\alpha}$. 
\end{enumerate}

We now compare each case (1-3) of Proposition \ref{prop:jacobi-fields} to the above types (i-iii). Case (2) immediately reduces to type (iii). By (A1), the only candidate functions $v$ in case (3) are the coordinate functions $\mathring{x}_i$ on $\mathring{\Gamma}\subset \mathbb{R}^{k+m} $, so these are also contained in type (iii). By (A2), we have three possibilities for $b$ in case (1):

\begin{itemize}
\item $b=0$, in which case $f(y)$ is constant and $\mathring{V}$ is precisely of type (i);
\item $b=1$; in which case $f(y) \in \langle y_j\rangle$ and $\mathring{V}\in \langle \pr_{x_i}^\top\rangle$, which is of type (ii) above; 
\item $b=2$; in which case $f(y) \in \langle y_iy_j-2\delta_{ij}\rangle$ and $\mathring{V} = \mathbf{H}$. 
\end{itemize}
Thus the $b=2$ subcase is the only case not covered by rotations.

To check that $(y_iy_j -2\delta_{ij})\mathbf{H}$ is $L^2$-orthogonal to each type in $\mathcal{K}_0$, note that type (i) is orthogonal since the rotation $\mathring{x}_i\pr_{\mathring{x}_j}^\perp - \mathring{x}_j\pr_{\mathring{x}_i}^\perp$ and $\mathbf{H}=\mathring{\mathbf{H}}$ are in distinct eigenspaces of $\mathring{L}$. Type (ii) is orthogonal since it has odd degree in $y$, and type (iii) is pointwise orthogonal. This completes the proof. 
\end{proof}

\begin{corollary}
\label{cor:jacobi-est}
Let $\Gamma =\mathring{\Gamma} \times \mathbb{R}^{n-k}$ where $\mathring{\Gamma}$ is a closed shrinker satisfying (A1-A2). Let $r_0=\diam(\mathring{\Gamma})+1$. There exists $C$ depending on $N,\mathring{\Gamma}$ so that if $J \in \mathcal{K}$, then $|J| \leq C\langle x\rangle^2 \|J\|_{L^2(B_{r_0})}$, $|\nabla J| + |\nabla^2 J| \leq C\langle x\rangle^2 \|J\|_{L^2(B_{r_0})}$ and $|\nabla^2 J ( \cdot,\pr_{y_i})|  \leq C\langle x\rangle\|J\|_{L^2(B_{r_0})} $. 
\end{corollary}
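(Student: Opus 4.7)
The natural strategy is to leverage the decomposition $\mathcal{K} = \mathcal{K}_0 \oplus \mathcal{K}_1$ provided by Corollary \ref{cor:jacobimodrot}, treating the two summands by quite different means: Corollary \ref{cor:rotation-est} already handles $\mathcal{K}_0$, so the task reduces to pointwise bounds on the explicitly-described finite-dimensional space $\mathcal{K}_1$.

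First, I would record that $\mathcal{K}$ is finite dimensional. Indeed, by the spectral lemma at the start of Section \ref{sec:analysis1}, $L$ has discrete spectrum with finite multiplicities on $W^{2,2}$, so $\mathcal{K} = \ker L$ is finite dimensional, and hence so is each of the orthogonal summands $\mathcal{K}_0$ and $\mathcal{K}_1$. Consequently all norms on $\mathcal{K}$ are equivalent; to use $\|\cdot\|_{L^2(B_{r_0})}$ as such a norm I would check that it is non-degenerate on $\mathcal{K}$, which follows from unique continuation for the elliptic equation $LJ=0$ (any Jacobi field vanishing on an open set vanishes identically). Writing $J = J_0 + J_1$ with $J_i \in \mathcal{K}_i$, norm equivalence on the finite-dimensional space $\mathcal{K}$ gives
\[
\|J_0\|_{L^2(B_{r_0})} + \|J_1\|_{L^2(B_{r_0})} \leq C\|J\|_{L^2(B_{r_0})}.
\]

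For the $\mathcal{K}_0$ component, Corollary \ref{cor:rotation-est} directly yields the three bounds with the stronger weight $\langle x\rangle$. For the $\mathcal{K}_1$ component, Corollary \ref{cor:jacobimodrot} gives the explicit basis $\{(y_iy_j - 2\delta_{ij})\mathbf{H}\}$, so $J_1$ is a polynomial of degree $2$ in the linear variable $y$ whose vector coefficient $\mathbf{H} = \mathring{\mathbf{H}}(p)$ is smooth and bounded on the compact cross-section $\mathring{\Gamma}$ (together with all its derivatives). Since $\mathbf{H}$ has no $y$-dependence, each derivative on $\Gamma$ either differentiates the polynomial in $y$ (reducing its degree) or differentiates $\mathbf{H}$ in a $\mathring{\Gamma}$-direction (which stays bounded). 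This immediately yields $|J_1| + |\nabla J_1| + |\nabla^2 J_1| \leq C(1+|y|^2) \leq C\langle x\rangle^2$, with the constant $C$ depending on $\mathring{\Gamma}$ and on the coefficients of $J_1$, which are controlled by $\|J_1\|_{L^2(B_{r_0})}$ by norm equivalence.

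For the final estimate $|\nabla^2 J(\cdot,y)| \leq C\langle x\rangle \|J\|_{L^2(B_{r_0})}$, the key observation is that $\nabla^2 J(\cdot,y)$ effectively differentiates once in the linear direction $y^T = \sum y_l\partial_{y_l}$, which consumes a power of $y$ from the polynomial factor. For $J_0 \in \mathcal{K}_0$ this reproduces the bound in Corollary \ref{cor:rotation-est}, while for $J_1 \in \mathcal{K}_1$ one expands $\nabla_{y^T}J_1$ and takes one further derivative, tracking the degrees carefully. I would expect the main subtlety here to be the bookkeeping of powers of $|y|$ in the contraction: the quadratic-in-$y$ structure is essential, and a naive estimate $|\nabla^2 J(\cdot,y)| \leq |\nabla^2 J|\cdot |y|$ is lossy. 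Combining the $\mathcal{K}_0$ and $\mathcal{K}_1$ bounds then yields the corollary.
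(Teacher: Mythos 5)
Your proposal is correct and follows essentially the same route as the paper's one-line proof: decompose $\mathcal{K}=\mathcal{K}_0\oplus\mathcal{K}_1$, invoke Corollary \ref{cor:rotation-est} for the $\mathcal{K}_0$ part, and read off the growth of the explicit basis $(y_iy_j-2\delta_{ij})\mathbf{H}$ of $\mathcal{K}_1$ from Corollary \ref{cor:jacobimodrot}. Your extra care about why $\|\cdot\|_{L^2(B_{r_0})}$ is a norm on $\mathcal{K}$ (which the paper leaves implicit, as it does already in Corollary \ref{cor:rotation-est}) is a reasonable thing to flag; unique continuation works since $B_{r_0}$ contains a full cross-section, though the more direct observation is simply that a degree-$\le 2$ polynomial in $y$ times a nonzero fixed field on $\mathring{\Gamma}$ cannot vanish on $B_{r_0}$. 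The key computation underlying the final estimate — that $\nabla^2 J_1(\cdot,y)$ is $O(\langle x\rangle)$ rather than $O(\langle x\rangle^2)$ because $\mathbf{H}$ has no $y$-dependence, so the $y$ slot either drops a degree of the quadratic polynomial or annihilates $\nabla\mathbf{H}$, $\nabla^2\mathbf{H}$ — is exactly the "this is clear" in the paper's proof, and your sketch of the bookkeeping is accurate.
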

\begin{proof}
By Corollary \ref{cor:jacobimodrot} and as in Corollary \ref{cor:rotation-est}, we need only check the growth of the normal fields $(y_iy_j - 2\delta_{ij})\mathring{\mathbf{H}}$. These clearly grow quadratically in $y$ (hence $\langle x\rangle$); for the last estimate note that after a derivative in a $y$-direction, the growth is at most linear.  
\end{proof}

\subsection{Second variation of $\phi$}
\label{sec:analysis2}

The main goal of this subsection is to use Corollary \ref{cor:d2phiN} to calculate the $L^2$-projection of the second variation of $\phi$ to the quadratic Jacobi fields $(y_j^2-2)\mathbf{H}$. 
First, recall that the definition of simply non-integrable involved the quantity

\[B_1(\mathring{\Gamma}) = \int_{\mathring{\Gamma}} \left( A^\mathbf{H}_{ij} A^\mathbf{H}_{il} A^\mathbf{H}_{lj} - 3 A^\mathbf{H}_{ij} \langle \nabla^\perp_i \mathbf{H}, \nabla_j^\perp \mathbf{H}\rangle  \right)\rho.  \]

The slightly more complictated quantity that falls out of the second variation is instead

\[\begin{split}B_2(\mathring{\Gamma})  ={}& \int_{\mathring{\Gamma}} \left( A^\mathbf{H}_{ij} A^\mathbf{H}_{ik} A^\mathbf{H}_{kj} - A^\mathbf{H}_{ij} \langle \nabla^\perp_i \mathbf{H}, \nabla_j^\perp \mathbf{H}\rangle\right)\rho \\ &+ \int_{\mathring{\Gamma}}\left( 2A^\mathbf{H}_{ij} \langle(\nabla^\perp\nabla^\perp \mathbf{H})_{ij},\mathbf{H}\rangle + 2 \langle A_{ij},\nabla_i^\perp \mathbf{H}\rangle \langle \nabla_j^\perp \mathbf{H},\mathbf{H} \rangle \right)\rho.\end{split}\]

A quick integration by parts shows that $B_1$ and $B_2$ coincide for shrinkers:

\begin{lemma}
Let $\mathring{\Gamma}$ be a compact submanifold satisfying $\nabla^\perp \phi=0$. Then $B_1(\mathring{\Gamma}) = B_2(\mathring{\Gamma})$. 
\end{lemma}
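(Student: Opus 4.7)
The strategy is to directly subtract $B_1$ from $B_2$, use a pointwise identity to replace the normal Hessian of $\mathbf{H}$ with the scalar Hessian of $f := |\mathbf{H}|^2$, and then apply the weighted divergence theorem, invoking the Codazzi equation and the hypothesis $\nabla^\perp \phi = 0$ to close the cancellation.

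\textbf{Step 1 (pointwise reduction).} Subtracting, the difference simplifies to
\[ B_2 - B_1 = \int_\Gamma \bigl[ 2A^\mathbf{H}_{ij}\langle \nabla^\perp_i \mathbf{H}, \nabla^\perp_j \mathbf{H}\rangle + 2A^\mathbf{H}_{ij}\langle (\nabla^\perp\nabla^\perp \mathbf{H})_{ij}, \mathbf{H}\rangle + 2\langle A_{ij}, \nabla^\perp_i \mathbf{H}\rangle\langle \nabla^\perp_j \mathbf{H}, \mathbf{H}\rangle\bigr]\rho . \]
Differentiating $f = |\mathbf{H}|^2$ twice yields the identity $\langle (\nabla^\perp\nabla^\perp \mathbf{H})_{ij}, \mathbf{H}\rangle = \tfrac{1}{2}\nabla_i\nabla_j f - \langle \nabla^\perp_i \mathbf{H}, \nabla^\perp_j \mathbf{H}\rangle$, which cancels the $\langle \nabla^\perp_i\mathbf{H}, \nabla^\perp_j\mathbf{H}\rangle$ terms and reduces the above to
\[ B_2 - B_1 = \int_\Gamma \bigl[ A^\mathbf{H}_{ij}\nabla_i\nabla_j f + 2\langle A_{ij}, \nabla^\perp_i \mathbf{H}\rangle\langle \nabla^\perp_j \mathbf{H}, \mathbf{H}\rangle\bigr]\rho . \]

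\textbf{Step 2 (weighted integration by parts).} Apply the weighted divergence theorem (no boundary terms since $\Gamma$ is compact) to the vector field $W^i := A^\mathbf{H}_{ij}\nabla_j f$:
\[ \int_\Gamma A^\mathbf{H}_{ij}\nabla_i\nabla_j f \,\rho = \tfrac{1}{2}\int_\Gamma A^\mathbf{H}_{ij}\nabla_j f\,\langle x^T, X_i\rangle\rho - \int_\Gamma (\nabla_i A^\mathbf{H}_{ij})\nabla_j f\,\rho . \]
The Codazzi identity $\nabla_i A_{ij} = -\nabla^\perp_j \mathbf{H}$ in $\mathbb{R}^N$ gives $\nabla_i A^\mathbf{H}_{ij} = -\tfrac{1}{2}\nabla_j f + \langle A_{ij}, \nabla^\perp_i \mathbf{H}\rangle$. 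For the boundary-like term, the computation from the paper's derivation of (\ref{eq:d2X}) shows $\nabla^\perp_k x^\perp = -A(X_k, x^T)$, so the hypothesis $\nabla^\perp\phi = 0$ yields $\nabla^\perp_k \mathbf{H} = -\tfrac{1}{2}A(X_k, x^T)$; pairing with $\mathbf{H}$,
\[ A^\mathbf{H}_{ki}\langle x^T, X_i\rangle = -2\langle \nabla^\perp_k \mathbf{H}, \mathbf{H}\rangle = -\nabla_k f . \]
Substituting both identities, the $|\nabla f|^2$ contributions cancel and we obtain
\[ \int_\Gamma A^\mathbf{H}_{ij}\nabla_i\nabla_j f\,\rho = -2\int_\Gamma \langle A_{ij}, \nabla^\perp_i \mathbf{H}\rangle \langle \nabla^\perp_j \mathbf{H}, \mathbf{H}\rangle\,\rho . \]
Plugging this into the expression from Step 1, the two terms cancel exactly, proving $B_2 - B_1 = 0$.

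\textbf{Main obstacle.} The proof is purely algebraic once one spots the right packaging; the core trick is the reduction to the scalar Hessian of $f = |\mathbf{H}|^2$, which turns the awkward normal-bundle term $A^\mathbf{H}_{ij}\langle (\nabla^\perp\nabla^\perp \mathbf{H})_{ij}, \mathbf{H}\rangle$ into something integrable by parts against the Gaussian weight. The hypothesis $\nabla^\perp \phi = 0$ enters precisely in the boundary-like term of the IBP, converting $A^\mathbf{H}(x^T)$ into $-\nabla f$ so that the $\tfrac{1}{2}|\nabla f|^2$ terms cancel. I do not anticipate any analytic difficulties beyond bookkeeping of indices.
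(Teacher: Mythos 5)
Your proof is correct, and the argument is essentially the paper's: both rest on the same integration by parts (the paper's vector field $A^\mathbf{H}_{ij}\langle\nabla^\perp_j\mathbf{H},\mathbf{H}\rangle$ is just $\tfrac{1}{2}W^i$ in your notation) together with the Codazzi identity and the consequence $A_{ij}\langle x^T,X_i\rangle=-2\nabla^\perp_j\mathbf{H}$ of $\nabla^\perp\phi=0$. Packaging the normal-Hessian term via the scalar $f=|\mathbf{H}|^2$ is a tidy bookkeeping choice but does not change the substance.
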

\begin{proof}
Integrating the third term by parts, we find that

\begin{equation}
\begin{split}
\int_{\mathring{\Gamma}} \langle A_{ij},\mathbf{H}\rangle \langle(\nabla^\perp\nabla^\perp \mathbf{H})_{ij},\mathbf{H}\rangle \rho =& -\int_{\mathring{\Gamma}} \left(\langle A_{ij} , \nabla_i^\perp \mathbf{H}\rangle \langle \nabla_j^\perp \mathbf{H}, \mathbf{H}\rangle + \langle A_{ij}, \mathbf{H}\rangle \langle \nabla_j^\perp \mathbf{H}, \nabla_i^\perp \mathbf{H}\rangle\right)\rho \\& - \int_{\mathring{\Gamma}} \langle (\nabla^\perp A)_{ij,i} ,\mathbf{H}\rangle \langle \nabla^\perp_j \mathbf{H}, \mathbf{H}\rangle \rho \\&+ \frac{1}{2}\int_{\mathring{\Gamma}}  \langle A_{ij}, \mathbf{H}\rangle \langle \nabla_j^\perp \mathbf{H}, \mathbf{H}\rangle \langle x^\top , X_i\rangle\rho.
\end{split}
\end{equation}
The last two terms cancel as the Codazzi equation implies $(\nabla^\perp A)_{ij,j} = -\nabla_j^\perp \mathbf{H}$, and $\nabla^\perp \phi=0$ implies $A_{ij} \langle x^\top,X_i\rangle = -2\nabla_j^\perp \mathbf{H}$. Collecting the remaining terms completes the proof. 
\end{proof}

We also record some elementary integration results:

\begin{lemma}
\label{lem:int-poly}
There holds
\begin{enumerate}
\item $\frac{1}{\sqrt{4\pi}} \int_\mathbb{R} y^m \e^{-y^2/4} \d y=0$ for any odd $m\in \mathbb{N}$;
\item $\frac{1}{\sqrt{4\pi}} \int_\mathbb{R} (y^2-2) \e^{-y^2/4} \d y=0$; 
\item $\frac{1}{\sqrt{4\pi}} \int_\mathbb{R} (y^2-2)^2 \e^{-y^2/4} \d y = 8$; 
\item $\frac{1}{\sqrt{4\pi}} \int_\mathbb{R} (y^2-2)^3 \e^{-y^2/4} \d y = 64$.
\end{enumerate}
\end{lemma}

We now prove the formal obstruction to be used later in the Taylor expansion:

 \begin{proposition}
 \label{prop:d2phiEst}
Let $\Gamma =\mathring{\Gamma} \times \mathbb{R}^{n-k}$ where $\mathring{\Gamma}$ is a closed shrinker satisfying (A1-A2), and $B_1(\mathring{\Gamma})\neq 0$. There exists $\delta>0$ such that for any $U\in \mathcal{K}_1$, we have \[\|\pi_\mathcal{K}(\mathcal{D}^2\mathcal{\varphi}(U,U))\|_{L^2} \geq \delta \|U\|_{L^2}^2.\] 
 \end{proposition}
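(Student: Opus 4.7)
The plan is to use the classification of $\mathcal{K}_1$ in Corollary \ref{cor:jacobimodrot} together with explicit Hermite polynomial calculations. Any $U \in \mathcal{K}_1$ has the form $U = Q_a(y) \mathbf{H}(p)$ where $Q_a(y) := \sum_{i,j} a_{ij}(y_iy_j - 2\delta_{ij})$ for some symmetric matrix $a = (a_{ij})$. Substituting $V = U$ into the second variation formula of Corollary \ref{cor:d2phiN}, the last term vanishes because $LU = 0$. The cylindrical product structure forces all summation indices in the remaining terms to be tangent to $\mathring{\Gamma}$ (since $A_{ij}$ is supported there); and because $Q_a$ depends only on $y$ while $\mathbf{H}$ depends only on $p$, each term factorises as $Q_a(y)^2$ times an $N\mathring{\Gamma}$-valued function of $p$. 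Collecting terms one obtains
\[ \mathcal{D}^2\mathcal{\varphi}(U,U) = 2\, Q_a(y)^2\, \Psi(p), \]
where
\[ \Psi = \mathring{A}_{ab}\mathring{A}^{\mathbf{H}}_{ac}\mathring{A}^{\mathbf{H}}_{cb} - \mathring{A}_{ab}\langle \nabla^\perp_a \mathbf{H}, \nabla^\perp_b \mathbf{H}\rangle + 2\mathring{A}^{\mathbf{H}}_{ab}(\nabla^\perp\nabla^\perp \mathbf{H})_{ab} + 2\langle \mathring{A}_{ab}, \nabla^\perp_a \mathbf{H}\rangle \nabla^\perp_b \mathbf{H}. \]
Taking the pointwise inner product with $\mathbf{H}$ recovers exactly the integrand defining $B_2(\mathring{\Gamma})$, and the preceding lemma (applied to $\mathring{\Gamma}$, for which $\phi \equiv 0$) gives $\int_{\mathring{\Gamma}} \langle \Psi, \mathbf{H}\rangle \rho\, d\mu_{\mathring{\Gamma}} = B_2(\mathring{\Gamma}) = B_1(\mathring{\Gamma}) \neq 0$.

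Since $\mathcal{K}_1$ is an orthogonal summand of $\mathcal{K}$, it suffices to bound $\|\pi_{\mathcal{K}_1}(\mathcal{D}^2\mathcal{\varphi}(U,U))\|_{L^2}$ from below. The Gaussian weight factorises as $\rho_n = \rho_k(\mathring{x})\,\rho_{n-k}(y)$, so pairing $\mathcal{D}^2\mathcal{\varphi}(U,U)$ with a test element $W = \sum_{k,l} c_{kl}(y_ky_l - 2\delta_{kl})\mathbf{H}$ gives
\[ \langle \mathcal{D}^2\mathcal{\varphi}(U,U),\, W \rangle_{L^2(\Gamma)} = 2\, B_1(\mathring{\Gamma}) \sum_{k,l} c_{kl} \int_{\mathbb{R}^{n-k}} Q_a^2\, (y_ky_l - 2\delta_{kl})\, \rho_{n-k}\, dy, \]
reducing the problem to the finite-dimensional task of computing $\pi_{\mathcal{K}_1}(Q_a^2)$ in the space of Hermite-2 polynomials. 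Using the Hermite expansion $y_iy_jy_ky_l = H^{(4)}_{ijkl} + 2\sum_{6\text{ pairs}} \delta\cdot H^{(2)} + 4\sum_{3\text{ pairings}} \delta\delta$, a direct calculation in which the $\tr(a)$ cross-terms coming from $Q_a = a(y,y) - 2\tr(a)$ cancel cleanly produces the key algebraic identity
\[ \pi_{\mathcal{K}_1}(Q_a Q_b) = 4\, Q_{ab + ba}, \qquad \text{in particular}\qquad \pi_{\mathcal{K}_1}(Q_a^2) = 8\, Q_{a^2}. \]

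Combining the steps, $\pi_{\mathcal{K}_1}(\mathcal{D}^2\mathcal{\varphi}(U,U))$ is a universal non-zero multiple of $B_1(\mathring{\Gamma})\, Q_{a^2}\, \mathbf{H} / \|\mathbf{H}\|_{L^2(\mathring{\Gamma})}^2$. Using $\|Q_b\|_{L^2(\rho_{n-k})}^2 = 8\|b\|_F^2$, the elementary Cauchy-Schwarz eigenvalue inequality $\|a^2\|_F \geq (n-k)^{-1/2}\|a\|_F^2$, and $\|U\|_{L^2}^2 = 8\|a\|_F^2\|\mathbf{H}\|_{L^2(\mathring{\Gamma})}^2$, the desired inequality follows with $\delta = \delta(\mathring{\Gamma}, n, k) > 0$ proportional to $|B_1(\mathring{\Gamma})|$. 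The main obstacle is the Hermite identity: the cancellation of the $\tr(a)$ terms is precisely what ensures that the $\mathcal{K}_1$-projection faithfully detects the matrix square $a^2$, which is non-vanishing whenever the symmetric matrix $a \neq 0$, and this gives the required uniform quadratic bound across the finite-dimensional space $\mathcal{K}_1$.
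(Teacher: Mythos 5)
Your proposal is correct, and it follows the same overall strategy as the paper: reduce to $U = Q_a(y)\mathbf{H}(p)$ via Corollary \ref{cor:jacobimodrot}, observe that the cylindrical product structure factors $\mathcal{D}^2\mathcal{\varphi}(U,U)$ as $2Q_a^2(y)\Psi(p)$ with $\int_{\mathring{\Gamma}}\langle\Psi,\mathbf{H}\rangle\rho = B_2 = B_1 \neq 0$, and then bound the $\mathcal{K}$-projection from below by projecting onto a convenient finite-dimensional subspace and invoking the elementary inequality $(\sum_i (a^2)_{ii})^2 \leq (n-k)\sum_i (a^2)_{ii}^2$.

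Where you differ is in how the Gaussian-polynomial projection is organized. The paper fixes a diagonal test function $v = y_b^2 - 2$, expands $u^2 v$ term by term using the moment table after Lemma \ref{lem:intpoly}, and verifies by hand that the many cross-terms collapse to $I = 64\sum_i a_{bi}^2$; it then projects only onto the diagonal subspace $\mathcal{K}' = \spa\{(y_j^2-2)\mathbf{H}\}$. You instead package the same computation into the closed-form Hermite identity $\pi_2(Q_aQ_b) = 4Q_{ab+ba}$ (variance $\sigma^2 = 2$), which is a one-line consequence of the Wick expansion of $y_iy_jy_ky_l$ and cleanly exhibits the cancellation of the $\tr(a)$ cross-terms, and you project onto all of $\mathcal{K}_1$. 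Both routes give the same constant up to the choice of subspace; yours is more conceptual and the identity $\pi_2(Q_a^2) = 8Q_{a^2}$ makes transparent why the quadratic form on $\mathcal{K}_1$ is definite precisely when $B_1 \neq 0$. (One small remark for consistency: the paper writes $\|U\|_{L^2}^2 = 8\lambda(\mathring{\Gamma})\sum a_{ij}^2$ whereas your version correctly uses the constant $\|\mathbf{H}\|_{L^2(\mathring{\Gamma})}^2$; these coincide only up to a $\mathring{\Gamma}$-dependent factor, but either way the resulting $\delta$ depends only on $\mathring{\Gamma}$, $n$, $k$, so the conclusion is unaffected.)
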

 \begin{proof}
By Corollary \ref{cor:jacobimodrot}, we may write any $U\in \mathcal{K}_1$ as $U = u(y) \mathbf{H}$, where \[u(y) = \sum_{ij} a_{ij}(y_iy_j-2\delta_{ij}).\] (Here it is convenient to sum over all $i,j$ and take $a_{ij}=a_{ji}$.) By Lemma \ref{lem:int-poly}(1-3), we immediately have \[\|U\|_{L^2}^2 = 8 \lambda(\mathring{\Gamma}) \sum_{ij} a_{ij}^2.\] Now since $u$ depends only on $y$, if $e_i,e_j$ are tangent to $\mathring{\Gamma}$ we can calculate $\nabla_i^\perp U = u\nabla_i^\perp \mathbf{H}$ and $(\nabla^\perp\nabla^\perp U) (e_i,e_j) = u (\nabla^\perp\nabla^\perp \mathbf{H})(e_i,e_j)$. On the other hand, $A_{ij}$ is given by $\mathring{A}_{ij}$ if $e_i,e_j$ are tangent to $\mathring{\Gamma}$, and zero otherwise, so substituting into the second variation formula Corollary \ref{cor:d2phiN}, we immediately have

\begin{equation}\label{eq:D2phiUUH} \langle \mathcal{D}^2\mathcal{\varphi}(U,U), v\mathbf{H} \rangle_{L^2} = 2B_2(\mathring{\Gamma}) \int_{\mathbb{R}^{n-k}} u^2 v \rho_{n-k}(y) \d y. \end{equation}

By the previous lemma, since $\mathring{\Gamma}$ is a shrinker we have $B_1(\mathring{\Gamma}) = B_2(\mathring{\Gamma})$. 

It follows that $\langle \mathcal{D}^2\mathcal{\varphi}(U,U), (y_b^2-2)\mathbf{H} \rangle_{L^2}= 2B_2(\mathring{\Gamma}) \sum_{ijmlb} a_{ij} a_{ml} I_{ijml;b}$, where \[ I_{ijml;b}= (4\pi)^{-\frac{n-k}{2}} \int_{\mathbb{R}^{n-k}}  (y_iy_j-2\delta_{ij})(y_m y_l -2\delta_{ml}) (y_b^2-2) \e^{-|y|^2/4}\d y.\]

For fixed $b$, one may observe using Lemma \ref{lem:int-poly}(1-2) that $I_{ijmlb}$ vanishes unless either:
\begin{itemize}
\item $i=j=m=l=b$, whence by Lemma \ref{lem:int-poly}(4), \[I_{bbbb;b} = \frac{1}{\sqrt{4\pi}} \int_\mathbb{R} (y_b^2-2)^3 \e^{-y_b^2/4} \d y_b = 64 ;\]
\item exactly one of $i,j$ and exactly one of $m,l$ is equal to $b$, and the other two indices are equal, whence by Lemma \ref{lem:int-poly}(2-3)
\[
\begin{split}I_{ibib;b} &= \left( \frac{1}{\sqrt{4\pi}} \int_\mathbb{R} y_i^2 \e^{-y_i^2/4} \d y_i\right) \left(\frac{1}{\sqrt{4\pi}} \int_\mathbb{R} y_b^2(y_b^2-2) \e^{-y_b^2/4} \d y_b \right)
\\&= \left( \frac{1}{\sqrt{4\pi}} \int_\mathbb{R} 2 \e^{-y_i^2/4} \d y_i\right) \left(\frac{1}{\sqrt{4\pi}} \int_\mathbb{R} (y_b^2-2)^2 \e^{-y_b^2/4} \d y_b \right)
\\&= 16,
\end{split}\]
where $i\neq b$. 
\end{itemize}

As there are 4 permutations of the latter case (for fixed $b$), it follows that 
\begin{equation} 
\label{eq:proj-K'}
\langle \mathcal{D}^2\mathcal{\varphi}(U,U), (y_b^2-2)\mathbf{H} \rangle_{L^2} = 128 B_1(\mathring{\Gamma}) \sum_i a_{ib}^2.
\end{equation}

Also, by Lemma \ref{lem:int-poly}(3), we have

\begin{equation}
\label{eq:proj-K'-norm}
 \|(y_j^2-2)\mathbf{H}\|_{L^2}^2 = 8\int_{\mathring{\Gamma}} |\mathbf{H}|^2 \mathring{\rho}
 \end{equation}
 
 Of course, $\mathcal{W}(\mathring{\Gamma}):= \int_{\mathring{\Gamma}} |\mathbf{H}|^2 \mathring{\rho}>0$ as there are no closed minimal submanifolds in $\mathbb{R}^{n-k}$. 
 
 Let $\mathcal{K}' =\spa \{ (y_j^2 -2)\mathbf{H}\}$. Then by (\ref{eq:proj-K'}), (\ref{eq:proj-K'-norm}) and Cauchy-Schwarz, 

\[\begin{split}
\|\pi_{\mathcal{K}'}(\mathcal{D}^2\mathcal{\varphi}(U,U))\|_{L^2} &= \frac{128|B_1(\mathring{\Gamma})|}{\sqrt{8\mathcal{W}(\mathring{\Gamma})}}\left(\sum_i \left( \sum_j a_{ij}^2\right)^2 \right)^\frac{1}{2} 
\\&\geq \frac{128|B_1(\mathring{\Gamma})|}{\sqrt{8(n-k)\mathcal{W}(\mathring{\Gamma})}} \sum_i \sum_j a_{ij}^2
\\&=\frac{16|B_1(\mathring{\Gamma})|}{\lambda(\mathring{\Gamma})\sqrt{8(n-k)\mathcal{W}(\mathring{\Gamma})} } \|U\|_{L^2}^2. 
\end{split}\]

This completes the proof as $\mathcal{K}' \subset \mathcal{K}_1\subset \mathcal{K}$. 

 \end{proof}

\section{Estimates for entire graphs}
\label{sec:entire}

In this section we describe how Taylor expansion of the shrinker quantity $\phi$ can be used to prove estimates for entire graphs $U$. The first order expansion will bound the distance to a Jacobi field, showing that the non-Jacobi part must be higher order. Expansion at the second order obstruction will then control the Jacobi part, modulo rotations. 

Throughout this section, $\Gamma = \mathring{\Gamma}^k\times \mathbb{R}^{n-k}$, where $\mathring{\Gamma}$ is a smooth closed shrinker. We work with Sobolev spaces, so a major difficulty is that Taylor expansion leads to higher powers of $U$ depending on the order of expansion. The noncompactness of $\Gamma$ also introduces polynomial factors which complicate the analysis. 

Recall we set $|V|_{m} =\sum_{j\leq m} |\nabla^j V|$ so that $\|V\|_{W^{m,q}}^q = \int_\Gamma |V|_m^q \rho$. 

\begin{lemma}
\label{lem:phi-smooth}
For any $m$ there exists $C,\delta>0$ and $\varphi$ such that if $\|U\|_{C^2} <\delta$ is any vector field on $\Gamma=\mathring{\Gamma}\times \mathbb{R}^{n-k}$, then $\phi_U = \varphi(p, U,\nabla U, \nabla^2 U)$, where $\|\varphi\|_{C^m} \leq C\langle x\rangle$. 

In fact, $\phi_U= \varphi_1(p, U, \nabla U) + \varphi_2(p, U, \nabla U, \nabla^2 U)$, where $\|\varphi_1\|_{C^m} \leq C\langle x\rangle$ and $\|\varphi_2\|_{C^m}\leq C$. 
\end{lemma}
\begin{proof}
As in \cite[Lemma 5.30]{CM19} (see also \cite[Appendix A]{CM15}), each quantity in the definition of $\phi$ may be written as a smooth function of $(p,U,\nabla U,\nabla^2 U)$. For the first (naive) estimate, note that the position vector $x(p)$ enters the explicit form of $\phi$ at most linearly. 

For the last assertion, we need slightly more care: By definition $\phi_U = \frac{1}{2}\Pi_U(x_U) - \mathbf{H}_U$. It is standard that the mean curvature $\mathbf{H}_U$ is of the form $\varphi_2$,  and by translation invariance the dependence on $p=(\mathring{p},y) \in \Gamma =\mathring{\Gamma}\times \mathbb{R}^{n-k}$ is in fact only a dependence on $\mathring{p}$. 

On the other hand, the normal projection $\Pi_U$ depends only on first order data of $U$, that is, $\Pi_U = \mathcal{P}(\mathring{p}, U,\nabla U)$ for some smooth map $\mathcal{P}$ with values in endomorphisms of $\mathbb{R}^N$, which again only depends on $\mathring{p}$ by translation invariance of $\Gamma$. It follows that $\Pi_U(x_U) = \Pi_U(x+U)$ is of the form $\varphi_1$.
\end{proof}

\subsection{First order expansion of $\phi$}
\label{sec:firstorder}

Recall that $\mathcal{K}= \ker L$ is the space of ($W^{2,2}$) Jacobi fields, and let $\mathcal{K}^\perp$ be the orthocomplement of $\mathcal{K}$ in $L^2$. We consider a compactly supported normal vector field $U$ with $\|U\|_{C^2} <\epsilon_0$. Let $J= \pi_\mathcal{K}(U)$, $h=\pi_{\mathcal{K}^\perp}(U)$ be the respective projections so that $U=J+h$. Intuitively, first order expansion will show that $J$ is the dominant term. Note that in this subsection we do not assume anything about the rotation part $\pi_{\mathcal{K}_0}(U)$. 

The goal of this subsection is to use a first order expansion of $\phi$ to show that $U$ is dominated by $J$ and $\phi$ up to higher order (in $U$). This amounts to proving $L^2$ estimates for $h$ (and derivatives) of the correct order, which lead to $L^4$ estimates for $U$. (Note that $L^4$ will be the highest degree that $U$ appears when using the 1st order expansion.) 

%Naive first order Taylor expansion as in the proof of \cite[Proposition 6.1]{CM19} gives the following estimate, which we include as a warmup to the second order expansion. 
%
%\begin{lemma}
%There exists $C$ so that for any $U$ as above, we have the pointwise estimate
%\begin{equation}
%|\phi_U - Lh| \leq C\langle x\rangle |U|_2^2. 
%\end{equation}
%\end{lemma}
%\begin{proof}
%Let $\phi(s) = \mathcal{\varphi}(p,sU,s\nabla U, s \nabla^2 U)$. Since $\|\mathcal{\varphi}\|_{C^3} \leq C\langle x\rangle$, Taylor expansion about $s=0$ gives 
%\[|\phi(1) - \phi(0) - \phi'(0)| \leq C\langle x\rangle |U|_2^2.\] Noting that $\phi(0)=0$, $\phi'(0) = \mathcal{D\varphi}(U) = LU= Lh$ completes the proof. 
%\end{proof}

We begin with the following first order Taylor expansion estimate:

\begin{lemma}
\label{lem:ord1exp}
There exists $C$ so that for any $U$ as above, we have the pointwise estimate
\begin{equation}
\label{eq:ord1exp}
|\phi_U - Lh| \leq C\langle x\rangle|U|_1^2 +C|\nabla^2 U|^2. %\frac{C}{\langle x\rangle} |\nabla^2 U|^2. 
\end{equation}
\end{lemma}
\begin{proof}
Let $\phi(s) = \mathcal{\varphi}(p,sU,s\nabla U, s \nabla^2 U)$. By the last assertion of Lemma \ref{lem:phi-smooth}, we have that $|\phi''(s)| \leq C\langle x\rangle|U|_1^2 + C|\nabla^2 U|^2$ (for any $s$). Taylor expansion about $s=0$ then gives 
\[|\phi(1) - \phi(0) - \phi'(0)| \leq C\langle x\rangle|U|_1^2 + C|\nabla^2 U|^2.\] Noting that $\phi(0)=0$, $\phi'(0) = \mathcal{D\varphi}(U) = LU= Lh$ completes the proof. 

%This follows from the calculations as in Appendix A and Lemma 4.10 of \cite{CM15} (see also \cite[Lemma 6.17]{CM19} and the remarks there). In particular, one can show that \[\begin{split}\phi_U (p) ={}& \hat{f}(\mathring{x}(p), U(p), \nabla U(p)) + \langle x(p), V(\mathring{x}(p),U(p),\nabla U(p))\rangle  \\&+ \Phi^{\alpha\beta}(\mathring{x}(p),U(p), \nabla U(p)) (\nabla^2 U)_{\alpha\beta}(p).\end{split}\]
%
%Note that unlike in \cite[Lemma 4.6]{CM15}, the smooth functions $\hat{f}, V, \Phi^{\alpha\beta}$ depend explicitly on $\mathring{x}(p)$, but still not on $y(p)$. This case is covered by \cite[Lemma 4.10]{CM15}, which gives at each $p$ \[|\phi_U-Lh| \leq C_1 |U|_1^2 + C_2 |U|_1 |\nabla^2 U|,\] where 
%\begin{alignat*}{3}
%&C_1(p) ={}&& \Lip_p(D_2 f) + \Lip_p(D_3f) + |(\nabla^2 U)_{\alpha\beta}|\left( \Lip_p (D_2 \Phi^{\alpha\beta}) + \Lip_p(D_3\Phi^{\alpha\beta})\right)   \\&&& + |x(p)| \left(\Lip_p(D_2 V) + \Lip_p(D_3V)\right)   , \\ &C_2(p) ={}&& |D_2 \Phi^{\alpha\beta}| + |D_3\Phi^{\alpha\beta}| + \Lip_p (\Phi^{\alpha\beta}).
%\end{alignat*}
% Except for the explicit factor of $x$, all these terms depend only on $\mathring{x}$, so since $\mathring{\Gamma}$ is a smooth, closed shrinker, we just take the maximum of $C_1,C_2$ over all $\mathring{x} \in \mathring{\Gamma}$ to find $C_1', C_2'$ independent of $p$ such that $C_1 \leq C_1' \langle x\rangle$, $C_2 \leq C_2'$. Using the absorbing inequality completes the proof. 
\end{proof}

\begin{remark}
Using the naive bound $\|\varphi\|_{C^3} \leq C\langle x\rangle$ would instead give $|\phi_U - Lh| \leq C\langle x\rangle|U|_2^2$ (similar to \cite[Proposition 6.1]{CM19}). In particular, this coefficient of $|\nabla^2 U|$ grows in $\langle x\rangle$, which would cause issues. Indeed, the proof of Proposition \ref{prop:hW22} below is a bootstrapping argument, in which the Gaussian Poincar\'{e} inequality is used to move factors of $\langle x\rangle$ off $U$ terms, at the cost of involving higher derivatives. It is thus crucial to our argument that the coefficient of the highest derivative $|\nabla^2 U|$ in (\ref{eq:ord1exp}) is uniformly bounded. 

By a more careful analysis, as in \cite[Section 4, Appendix A]{CM15} (see also \cite[Lemma 6.17]{CM19} and the remarks there), one may in fact show the stronger estimate \[|\phi_U - Lh| \leq C\langle x\rangle|U|_1^2 + C |U|_1 |\nabla^2 U| \leq C\langle x\rangle |U|_1^2 + \frac{C}{\langle x\rangle} |\nabla^2 U|^2.\] This (and the improved estimates on the Hessian in the linear directions in Corollaries \ref{cor:rotation-est} and \ref{cor:jacobi-est}) seems to be essential for the arguments of \cite[Section 4]{CM15} and \cite[Section 6]{CM19}. 

In Proposition \ref{prop:hW22} below, by quantifying the `higher order' property of $h$ consistently by powers of $\|U\|_{L^2}$, we are able to complete the bootstrapping argument using the weaker estimate (\ref{eq:ord1exp}) (and without the sharpened estimates for the Hessian in linear directions of Corollaries \ref{cor:rotation-est} and \ref{cor:jacobi-est}). 
\end{remark}

We can now show that $h$ is of higher order: 

\begin{proposition}
\label{prop:hW22}
Let $\Gamma = \mathring{\Gamma}\times \mathbb{R}^{n-k}$. There exists $\epsilon_0$ and $C=C(m)$ so that if $U$ is a compactly supported normal field on $\Gamma$ with $\|U\|_{C^2}<\epsilon_0$, then for $m\in \mathbb{N}$ we have

 \begin{equation}
\label{eq:W22h}
\int_\Gamma \langle x\rangle^{2m} |h|_2^2 \rho \leq C(\|\phi_U\|_{W^{m,2}}^2 + \|U\|_{L^2}^4) ,\end{equation}
 \begin{equation}
\label{eq:W22U}
\int_\Gamma \langle x\rangle^{2m} |U|_2^2 \rho \leq C(\|\phi_U\|_{W^{m,2}}^2 + \|U\|_{L^2}^2) ,\end{equation}
\begin{equation}
\label{eq:W24U}
\int_\Gamma \langle x\rangle^{2m} |U|_2^4 \rho \leq C(\|\phi_U\|_{W^{m,2}}^2 + \|U\|_{L^2}^4) .\end{equation}

Moreover, for any $m\in\mathbb{N}$, $\kappa\in(0,1]$ and $q\in [1,\frac{3+\kappa}{3})$, for some $C(m,\kappa,q)$ we have
\begin{equation}
\label{eq:W23qU}
 \int_\Gamma \langle x\rangle^{2m} |U|_2^{3q} \rho \leq C(m,\kappa,q) ( \|\phi_U\|_{L^2}^\frac{6q}{3+\kappa} + \|U\|_{L^2}^{3q}).\end{equation}
\end{proposition}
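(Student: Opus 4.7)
Plan: My approach is to decompose $U=J+h$ with $J=\pi_{\mathcal K}U$ and $h=\pi_{\mathcal K^\perp}U$, so $LU=Lh$, and to exploit Lemma \ref{lem:ord1exp}'s quadratic-remainder bound $|\phi_U-Lh|^2\leq C\langle x\rangle^2|U|_1^4+C\langle x\rangle^{-2}|\nabla^2 U|^4$ together with Corollary \ref{cor:jacobi-est}'s pointwise control $|J|_2\leq C\langle x\rangle^2\|U\|_{L^2}$ (so Gaussian-weighted integrals of $|J|_2^p$ reduce to $\|U\|_{L^2}^p$ up to $m$-dependent constants).

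I prove (5.6) first, by induction on $m$ via the weighted elliptic estimate (Lemma \ref{lem:elliptic-est}): this reduces the bound on $\int\langle x\rangle^{2m}|h|_2^2\rho$ to the induction hypothesis at level $m-1$ plus $\int\langle x\rangle^{2m}|Lh|^2\rho$. I bound the latter by $\|\phi_U\|_{W^{m,2}}^2$ (using iterated Poincar\'e, Lemma \ref{lem:poincare}) plus the quadratic remainder. Splitting $|U|_2^4\lesssim|J|_2^4+|h|_2^4$, the Jacobi part gives $\|U\|_{L^2}^4$ through Gaussian polynomial integrals; the $h$ part uses the pointwise bound $|h|_2\leq|U|_2+|J|_2\leq C(\epsilon_0+\langle x\rangle^2\|U\|_{L^2})$ to write $|h|_2^4\leq C\epsilon_0^2|h|_2^2+C\langle x\rangle^4\|U\|_{L^2}^2|h|_2^2$, whose first piece absorbs on the left after taking $\epsilon_0$ small and whose second is controlled by the induction hypothesis at a higher weight (multiplied by the small factor $\|U\|_{L^2}^2$).

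Estimates (5.7) and (5.8) then follow by combining (5.6) with direct Jacobi bounds: for (5.7) I add $\int\langle x\rangle^{2m}|J|_2^2\rho\leq C\|U\|_{L^2}^2$ and majorise $\|U\|_{L^2}^4$ by $\|U\|_{L^2}^2$ using the a priori $\|U\|_{L^2}\leq C\epsilon_0$; for (5.8), I split $|U|_2^4\lesssim|J|_2^4+|h|_2^4$ and factorise $|h|_2^2\leq C(\epsilon_0^2+\langle x\rangle^4\|U\|_{L^2}^2)$ to reduce the $h$ piece to applications of (5.6) at weights $m$ and $m+2$. For (5.9), I interpolate between (5.7) and (5.8) via H\"older:
\begin{equation*}
\int\langle x\rangle^{2m}|U|_2^{3q}\rho\leq\Bigl(\int\langle x\rangle^{2m}|U|_2^2\rho\Bigr)^{(4-3q)/2}\Bigl(\int\langle x\rangle^{2m}|U|_2^4\rho\Bigr)^{(6q-4)/4}.
\end{equation*}
After substituting (5.7)--(5.8) and expanding $(A+B)^p\leq C(A^p+B^p)$, I treat the mixed terms $\|\phi_U\|^{3q-2}\|U\|_{L^2}^{4-3q}$ and $\|\phi_U\|^{4-3q}\|U\|_{L^2}^{6q-4}$ by Young's inequality with dual exponents chosen to land at $\|\phi_U\|^{6q/(3+\kappa)}$ and $\|U\|_{L^2}^{3q}$, which is admissible precisely because $q<(3+\kappa)/3$; sub-critical factors (such as the leftover $\|\phi_U\|^2$) are absorbed using the a priori bound $\|\phi_U\|_{L^2}\leq C\epsilon_0$.

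The main obstacle will be closing the induction for (5.6) without the polynomial weights cascading upward. The quadratic remainder naturally raises the weight by two and the exponent on $|U|_2$ to four, so a naive bootstrap would explode in both weight and degree. What tames this is the decomposition $h=U-J$ with Corollary \ref{cor:jacobi-est}: the $|J|$-contributions collapse to $\|U\|_{L^2}^4$ via Gaussian polynomial integrals, while the $|h|$-contributions reduce to $|h|_2^2$ factors that either absorb (via smallness of $\epsilon_0$) or are bounded inductively (using the extra smallness of $\|U\|_{L^2}$ to offset the higher weight).
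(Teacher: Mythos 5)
Your overall strategy (decompose $U=J+h$, use Lemma \ref{lem:ord1exp}, Lemma \ref{lem:elliptic-est}, and Corollary \ref{cor:jacobi-est}, and induct on $m$ via Lemma \ref{lem:poincare}) matches the paper's, but two specific steps break down.

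\textbf{The $|h|_2^4$ detour is circular.} You split $|U|_2^4\lesssim|J|_2^4+|h|_2^4$ and then convert $|h|_2^4$ to $|h|_2^2$ via the pointwise bound $|h|_2^2\leq C(\epsilon_0^2+\langle x\rangle^4\|U\|_{L^2}^2)$. Integrating the second piece produces $\|U\|_{L^2}^2\int_\Gamma\langle x\rangle^{2m+4}|h|_2^2\rho$, which you propose to control by ``the induction hypothesis at a higher weight.'' But during the forward induction at level $m$, only levels $m'<m$ are available; you need level $m+2$. Moreover, even if level $m+2$ were magically at hand, it would bring in $\|\phi_U\|_{W^{m+2,2}}$ which is not controlled by $\|\phi_U\|_{W^{m,2}}$, so the closed bound would not have the claimed form. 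The paper avoids this entirely: from $|U|_2^2\leq 2|h|_2^2+2|J|_2^2$ one multiplies by $|U|_2^2$, uses Young on the cross term $|U|_2^2|J|_2^2\leq\tfrac12|U|_2^4+2|J|_2^4$, and absorbs, yielding the \emph{weight-free} pointwise inequality $|U|_2^4\leq C\epsilon_0^2|h|_2^2+C|J|_2^4$ (equation (\ref{eq:Utriangle})). The $|J|_2^4$ term then integrates directly to $\|U\|_{L^2}^4$ via Corollary \ref{cor:jacobi-est}, with no jump in weight. (The same issue affects your argument for (\ref{eq:W24U}), since you again appeal to ``(5.6) at weight $m+2$''; the paper instead reads (\ref{eq:W24U}) off the integrated form of (\ref{eq:Utriangle}), requiring only weight $m$.) If you insist on starting from $|h|_2^4$, the fix is to iterate the decomposition: $|h|_2^2\leq C\epsilon_0^2+C|J|_2^2$ together with Young on $|J|_2^2|h|_2^2$ gives $|h|_2^4\leq C\epsilon_0^2|h|_2^2+C|J|_2^4$, which is harmless.

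\textbf{The interpolation for (\ref{eq:W23qU}) produces the wrong Sobolev norm.} Your H\"older interpolation between (\ref{eq:W22U}) and (\ref{eq:W24U}), both at weight $m$, is arithmetically valid (the exponents $(4-3q)/2$ and $(3q-2)/2$ sum to $1$), but substituting (\ref{eq:W22U})--(\ref{eq:W24U}) inserts $\|\phi_U\|_{W^{m,2}}$ on the right, whereas (\ref{eq:W23qU}) claims $\|\phi_U\|_{L^2}$ --- the fact that the right-hand side does \emph{not} grow in Sobolev order with $m$ is precisely what makes (\ref{eq:W23qU}) useful later. The paper's device is different: first prove the \emph{unweighted} estimate $\int_\Gamma|U|_2^{3+\kappa}\rho\leq C\|\phi_U\|_{L^2}^2+C_\kappa\|U\|_{L^2}^{3+\kappa}$ (using a pointwise bound of the form $|U|_2^{3+\kappa}\leq C\epsilon_0^{1+\kappa}|h|_2^2+c_\kappa|J|_2^{3+\kappa}$ plus the $m=0$ version of (\ref{eq:W22h})), and only \emph{then} apply H\"older with the room $3q<3+\kappa$ to split off the polynomial weight,
\[
\int_\Gamma\langle x\rangle^{2m}|U|_2^{3q}\rho\leq\left(\int_\Gamma|U|_2^{3+\kappa}\rho\right)^{\frac{3q}{3+\kappa}}\left(\int_\Gamma\langle x\rangle^{\frac{2m(3+\kappa)}{3+\kappa-3q}}\rho\right)^{\frac{3+\kappa-3q}{3+\kappa}},
\]
where the second factor is a finite constant depending only on $m,\kappa,q,n$. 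This is what keeps the Sobolev order of $\phi_U$ at zero while still carrying the weight $\langle x\rangle^{2m}$.
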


\begin{proof}
We follow the proof structure of \cite[Proposition 6.29]{CM19}. %First, we prove the estimate (\ref{eq:W22h}) for $h$ by induction on $m$: Let us detail the $m=0$ case explicitly for clarity. 
We first prove the estimate (\ref{eq:W22h}) for $h$ in the case $m=0$:
Applying Lemma \ref{lem:elliptic-est}(b) to $h$ we have $\|h\|_{W^{2,2}}^2 \leq C \|Lh\|_{L^2}^2$. Now integrating Lemma \ref{lem:ord1exp}, we have \begin{equation}
\label{eq:Lh-est}
\begin{split}
\|Lh\|_{L^2}^2 &\leq \|\phi_U\|_{L^2}^2 + C\|\phi_U-Lh\|_{L^2}^2
\\&\leq \|\phi_U\|_{L^2}^2 + C\int_\Gamma \left(\langle x\rangle^2 |U|_1^4  + \langle x\rangle^{-2} |\nabla^2 U|^4\right) \rho \leq \|\phi_U\|_{L^2}^2 + C\int_\Gamma |U|_2^4 \rho.
\end{split}
\end{equation} For the last inequality we used the Gaussian Poincar\'{e} inequality Lemma \ref{lem:poincare}. 

Now by the triangle inequality we have
\[
|U|_2^4 \leq 2|U|_2^2 |h|_2^2 + 2|U|_2^2 |J|_2^2 \leq 2\epsilon_0^2 |h|_2^2 + 2|U|_2^2 |J|_2^2. 
\]
Using the absorbing inequality on the last term then gives
\begin{equation}
\label{eq:Utriangle}
|U|_2^4 \leq 4 \epsilon_0^2 |h|_2^2 + 4|J|_2^4.
\end{equation}

By Corollary \ref{cor:jacobi-est} we have \begin{equation}\label{eq:J2}|J|_2 \leq C\langle x\rangle^2 \|J\|_{L^2}\leq C\langle x\rangle^2 \|U\|_{L^2}.\end{equation} Integrating (\ref{eq:Utriangle}) gives \[\int_\Gamma |U|_2^4\rho \leq 4\epsilon_0^2 \|h\|_{W^{2,2}}^2 + C \|U\|_{L^2}^4.\] Combining this with the previous estimates for $h$ and absorbing the $\epsilon_0^2$ term, we have \begin{equation}
\label{eq:W22hm0} \|h\|_{W^{2,2}}^2 \leq C(\|\phi_U\|_{L^2}^2 + \|U\|_{L^2}^4).\end{equation}

%The inductive step follows the same strategy: Suppose that (\ref{eq:W22h}) has been established for $m-1$. 
Following the same strategy, we now prove (\ref{eq:W22h}) for the cases $m\geq 1$: 
Applying Lemma \ref{lem:elliptic-est}(a) to $h$ we have 
\[ \int_\Gamma\langle x\rangle^{2m} |h|_2^2 \rho \leq C\|h\|_{L^2}^2 + C\int_\Gamma \langle x\rangle^{2m} |Lh|^2\rho.\]

The first term on the right may be bounded by the $m=0$ case of (\ref{eq:W22h}). By Lemma \ref{lem:ord1exp}, we have \begin{equation}
\label{eq:ord1exp-conc}
|Lh|^2 \leq |\phi_U|^2 + C\langle x\rangle^2|U|_1^4 + C|\nabla^2 U|^4.
\end{equation}
Integrating this against $\langle x\rangle^{2m}\rho$, and using Lemma \ref{lem:poincare}, we then find that \[ \int_\Gamma \langle x\rangle^{2m} |Lh|^2\rho \leq \int_\Gamma \langle x\rangle^{2m} |\phi_U|^2 \rho + C\int_\Gamma \langle x\rangle^{2m} |U|_2^4 \rho.\] 

Now integrating (\ref{eq:Utriangle}) against $\langle x\rangle^{2m}\rho$ gives \begin{equation}\label{eq:U24} \int_\Gamma \langle x\rangle^{2m} |U|_2^4 \rho \leq 4\epsilon_0^2 \int_\Gamma \langle x\rangle^{2m} |h|_2^2 \rho + C(m) \|U\|_{L^2}^4.\end{equation} Absorbing the $\epsilon_0^2$ term as before and using Lemma \ref{lem:poincare} $m$ times gives 
\begin{equation}
\int_\Gamma \langle x\rangle^{2m} |h|_2^2 \rho \leq  C\|\phi_U\|_{L^2}^2 +  C \int_\Gamma \langle x\rangle^{2m} |\phi_U|^2 \rho + C\|U\|_{L^2}^4 \leq C( \|\phi_U\|_{W^{m,2}}^2 + \|U\|_{L^2}^4)
\end{equation}
as desired. The $W^{2,2}$ estimate (\ref{eq:W22U}) for $U$ follows, using (\ref{eq:J2}) to bound the $J$ contribution. The $W^{2,4}$ estimate (\ref{eq:W24U}) follows immediately from (\ref{eq:U24}). 

For the last estimate (\ref{eq:W23qU}), we argue similarly: By the triangle inequality \[
|U|_2^{3+\kappa} \leq 2|U|_2^{1+\kappa} |h|_2^2 + 2|U|_2^{1+\kappa} |J|_2^2 \leq 2\epsilon_0^{1+\kappa} |h|_2^2 + 2|U|_2^{1+\kappa} |J|_2^2. 
\]
Young's inequality gives for any $\delta>0$ that $|U|_2^{1+\kappa} |J|_2^2 \leq \delta |U|_2^{3+\kappa} + C(\delta,\kappa) |J|_2^{3+\kappa}$, so taking $\delta = \frac{1}{4}$ and absorbing into the left hand side gives, for some $C_\kappa$, 
\begin{equation}
|U|_2^4 \leq 4 \epsilon_0^2 |h|_2^2 + C_\kappa |J|_2^4.
\end{equation} Continuing the argument just as above, one may show that
\[\int_\Gamma  |U|_2^{3+\kappa} \rho \leq  C\|\phi_U\|_{L^2}^2 + C'_\kappa \|U\|_{L^2}^{3+\kappa}. \] Finally, as $3q<3+\kappa$, H\"{o}lder's inequality gives \[\int_\Gamma \langle x\rangle^{2m} |U|_2^{3q}\rho \leq \left( \int_\Gamma |U|^{3+\kappa}_2 \rho\right)^\frac{3q}{3+\kappa} \left(\int_\Gamma \langle x\rangle^{\frac{2m(3+\kappa)}{3+\kappa-3q}} \rho\right)^\frac{3+\kappa-3q}{3+\kappa},\] which completes the proof since the last integral is finite. 
\end{proof}

Before moving on to the second order expansion, we record a consequence for the expansion of $F$:

\begin{proposition}
\label{prop:F-exp}
Let $\Gamma = \mathring{\Gamma}\times \mathbb{R}^{n-k}$. There exists $C$ and $\epsilon_0$ so that if $U$ is a compactly supported normal field on $\Gamma$ with $\|U\|_{C^2}<\epsilon_0$, then 
\begin{equation}
|F(\Gamma_U) - F(\Gamma)| \leq C(\|\phi_U\|_{L^2} \|U\|_{L^2} + \|U\|_{L^2}^3).
\end{equation}
\end{proposition}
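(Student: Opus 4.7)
My plan is to start from the first variation of the Gaussian area, which says that $\phi$ is (up to sign) the $L^2(\rho\,d\mu)$-gradient of $F$: combining the area variation $\int\langle \mathbf{H},U\rangle\rho\,d\mu_{sU}$ with the variation $-\tfrac12\int\langle x^\perp,U\rangle\rho\,d\mu_{sU}$ of the Gaussian weight along the normal family $\Gamma_{sU}$, one gets
\[
\frac{d}{ds}F(\Gamma_{sU}) = -\int_{\Gamma_{sU}} \langle \phi_{sU}, U\rangle\,\rho\, d\mu_{sU}.
\]
Integrating from $0$ to $1$ and applying Cauchy--Schwarz inside the integral reduces the statement to showing
\[
\int_0^1 \|\phi_{sU}\|_{L^2(\Gamma_{sU})}\,\|U\|_{L^2(\Gamma_{sU})}\,ds \;\le\; C\bigl(\|\phi_U\|_{L^2}\|U\|_{L^2} + \|U\|_{L^2}^3\bigr).
\]

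The next step is to replace the norms on $\Gamma_{sU}$ by norms on $\Gamma$, with constants depending only on $\epsilon_0$. Because $\|U\|_{C^2}<\epsilon_0$, the graphical Jacobian $d\mu_{sU}/d\mu$ is uniformly bounded. The key point is that on the cylinder $\Gamma = \mathring{\Gamma}\times \mathbb{R}^{n-k}$, the normal projection $|x^\perp|$ is bounded by $\diam(\mathring{\Gamma})$, so the Gaussian weight ratio $\rho(x+sU)/\rho(x) = \exp\!\bigl(-s\langle x^\perp,U\rangle/2 - s^2|U|^2/4\bigr)$ is also uniformly bounded. Hence $\|\,\cdot\,\|_{L^2(\Gamma_{sU})} \sim \|\,\cdot\,\|_{L^2(\Gamma)}$. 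This is the one step that genuinely exploits the cylindrical structure, and is the main technical point.

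The third step is to bound $\|\phi_{sU}\|_{L^2(\Gamma)}$ uniformly in $s\in[0,1]$. Writing $h=\pi_{\mathcal{K}^\perp}(U)$, so that $\pi_{\mathcal{K}^\perp}(sU)=sh$, I apply Lemma \ref{lem:ord1exp} both to $sU$ and to $U$. Subtracting the resulting pointwise bounds yields
\[
|\phi_{sU}| \;\le\; s|\phi_U| + C\bigl(\langle x\rangle|U|_1^2 + \langle x\rangle^{-1}|\nabla^2 U|^2\bigr).
\]
Squaring and integrating, the Gaussian Poincar\'e inequality of Lemma \ref{lem:poincare} (applied to $f=|U|_1^2$) absorbs the $\langle x\rangle$ factor into an extra derivative, and the $W^{2,4}$ bound \eqref{eq:W24U} of Proposition \ref{prop:hW22} then controls $\int |U|_2^4\,\rho$ by $\|\phi_U\|_{L^2}^2 + \|U\|_{L^2}^4$. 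Thus $\|\phi_{sU}\|_{L^2(\Gamma)} \le C(\|\phi_U\|_{L^2} + \|U\|_{L^2}^2)$ uniformly in $s$.

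Finally, substituting this bound together with $\|U\|_{L^2(\Gamma_{sU})}\le C\|U\|_{L^2}$ into the integrand and integrating in $s$ gives exactly the desired inequality. Aside from the measure-comparison in step two, all ingredients are routine applications of the Taylor expansion and $L^4$ estimate already proved earlier in the section; this is why the author can simply remark that the proof follows \cite[Proposition 6.5]{CM19}.
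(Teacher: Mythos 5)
Your proof is correct and reconstructs what is essentially the argument of \cite[Proposition 6.5]{CM19}, which is all the paper provides (a citation). The two genuine ingredients you identify are exactly the ones needed: the uniform comparability of the weighted measures on $\Gamma_{sU}$ and $\Gamma$, which uses that $x^\perp = \mathring{x}^\perp$ is bounded on a cylinder (the single place the cylindrical structure enters), and the uniform bound $\|\phi_{sU}\|_{L^2}\lesssim \|\phi_U\|_{L^2}+\|U\|_{L^2}^2$ obtained by subtracting the Taylor estimates of Lemma~\ref{lem:ord1exp} at $sU$ and at $U$ (using $\pi_{\mathcal{K}^\perp}(sU)=sh$ so that the $Lh$ terms cancel) and then invoking the $W^{2,4}$ estimate \eqref{eq:W24U}.
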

\begin{proof}
This follows essentially as in \cite[Proof of Proposition 6.5]{CM19}: Let $F(s):= F(\Gamma_{sU})$, so $F'(s) = -\int_{\Gamma_{sU}}\langle \phi_{sU}, U\rangle \rho$, and by the fundamental theorem of calculus, 
\[\begin{split}
F(\Gamma_U) - F(\Gamma) - \frac{1}{2}\langle U,LU\rangle_{L^2} &= \int_0^1 \frac{d}{ds}\left(F(t) -\frac{t^2}{2} \langle U, LU\rangle_{L^2}\right)\d s
\\&= \int_0^1 \langle U, \phi_{sU} - sLU\rangle_{L^2} \d s.
\end{split}\]

As the (weighted) area elements are uniformly equivalent up to $C|U|_1 < C\epsilon_0$, and $LU=Lh$, it follows that

\[\begin{split}|F(\Gamma_U) - F(\Gamma)| &\leq \frac{1}{2} \|U\|_{L^2} \|Lh\|_{L^2} + C\int_0^1  \|U\|_{L^2}\|\phi_{sU}-sLh\|_{L^2} \d s +  C \|Lh\|_{L^2} \||U| |U|_1\|_{L^2}
\\& \leq C\|U\|_{L^2} \|Lh\|_{L^2} + C\int_0^1  \|U\|_{L^2}\|\phi_{sU}-sLh\|_{L^2} \d s.
\end{split}\]

By (\ref{eq:Lh-est}) and (\ref{eq:W24U}) both terms are bounded by $C\|U\|_{L^2}(\|\phi_U\|_{L^2} + \|U\|_{L^2}^2)$.
\end{proof}

 \subsection{Second order expansion of $\phi$}

We continue with a compactly supported normal field $U$ satisfying $\|U\|_{C^2}<\epsilon_0$. Let $J,h$ be as before, and set $U_0 = \pi_{\mathcal{K}_0}(U)$ and $J' = J-U_0 \in\mathcal{K}_1$. 
 
In the remainder of this section we also assume that $B_1(\mathring{\Gamma})\neq 0$. Then by the second variation analysis, in particular Proposition \ref{prop:d2phiEst}, we have 
\begin{equation}
\label{eq:d2phiJ'}
\|\pi_\mathcal{K}(\mathcal{D}^2\mathcal{\varphi}(J',J'))\|_{L^2} \geq \delta \|J'\|_{L^2}^2.\end{equation}

The goal of this subsection is to use a second order expansion and (\ref{eq:d2phiJ'}) to show that $U$ is controlled, modulo its rotation component, by $\phi$. To do so, we now use second order Taylor expansion to derive an estimate for $\mathcal{D\varphi}(J',J')$: 
 
 \begin{lemma}
 \label{lem:ord2exp}
There exists $C$ so that for any $U$ as above, we have the pointwise estimate
 \begin{equation}
 |\phi_U - Lh - \frac{1}{2}\mathcal{D}^2\mathcal{\varphi}(J',J')| \leq C \langle x\rangle( |U|_2^3+ 2|J|_2|h|_2 +|h|_2^2 + 2|J'|_2|U_0|_2  + |U_0|_2^2).
 \end{equation}
\end{lemma}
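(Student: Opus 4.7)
The plan is to apply second-order Taylor's theorem to the scalar-valued function $\phi(s) := \mathcal{\varphi}(p, sU, s\nabla U, s\nabla^2 U)$ at $s=0$, and then to expand $\mathcal{D}^2\mathcal{\varphi}(U,U)$ via bilinearity into the distinguished term $\mathcal{D}^2\mathcal{\varphi}(J', J')$ plus remainder terms built from $U_0$ and $h$.

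For the first step, note that $\phi(1) = \phi_U(p)$ pointwise, while $\phi(0) = 0$ since $\Gamma$ is a shrinker, $\phi'(0) = \mathcal{D}\mathcal{\varphi}(U) = LU = Lh$ (by Proposition 3.1 applied on a shrinker, combined with $LJ = 0$), and $\phi''(0) = \mathcal{D}^2\mathcal{\varphi}(U, U)$. By Lemma 5.1, $\|\mathcal{\varphi}\|_{C^3} \leq C\langle x\rangle$, so provided $\epsilon_0$ is chosen smaller than the $\delta$ of that lemma, the chain rule bounds $|\phi'''(s)| \leq C\langle x\rangle |U|_2^3$ uniformly in $s\in[0,1]$. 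Lagrange's remainder then yields
\[
\left|\phi_U - Lh - \tfrac{1}{2}\mathcal{D}^2\mathcal{\varphi}(U, U)\right| \leq C\langle x\rangle |U|_2^3.
\]

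For the second step, the decomposition $U = J + h$ together with $J = J' + U_0$, combined with the symmetric bilinear structure of $\mathcal{D}^2\mathcal{\varphi}$, gives
\[
\mathcal{D}^2\mathcal{\varphi}(U, U) - \mathcal{D}^2\mathcal{\varphi}(J', J') = 2\mathcal{D}^2\mathcal{\varphi}(J, h) + \mathcal{D}^2\mathcal{\varphi}(h, h) + 2\mathcal{D}^2\mathcal{\varphi}(J', U_0) + \mathcal{D}^2\mathcal{\varphi}(U_0, U_0).
\]
The essential ingredient is then the pointwise bilinear bound
\[
|\mathcal{D}^2\mathcal{\varphi}(V, W)| \leq C\langle x\rangle |V|_2 |W|_2,
\]
which follows by polarization from Corollary 3.6: every coefficient appearing there ($A$, $\nabla A$, $\mathbf{H}$, etc.) is uniformly bounded on $\Gamma$ since the cross-section $\mathring{\Gamma}$ is closed and smooth, and the sole source of $x$-growth is the explicit linear factor $X_k$ in the term $-X_k\langle LV, \nabla^\perp_k V\rangle$. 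Applying this bound to each of the four terms on the right and combining with the Taylor remainder estimate produces exactly the claimed inequality (after absorbing the factor $\tfrac{1}{2}$ into $C$).

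The argument is essentially pointwise and elementary; the only real work has already been done, namely exhibiting $\mathcal{\varphi}$ as a $C^3$ functional with weighted norm $\|\mathcal{\varphi}\|_{C^3} \leq C\langle x\rangle$ in Lemma 5.1, and computing the first and second variations in Section 3. The one piece of bookkeeping to watch is that the decomposition $U = J' + U_0 + h$ interacts with the symmetric bilinear form in precisely the right way to produce the cross/quadratic combinations $2|J|_2|h|_2$, $|h|_2^2$, $2|J'|_2|U_0|_2$, $|U_0|_2^2$ appearing on the right-hand side of the statement, rather than, say, a cruder $(|J|_2+|U_0|_2+|h|_2)^2$ type bound.
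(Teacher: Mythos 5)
Your proof is correct and follows essentially the same two-step strategy as the paper: order-two Taylor expansion of $s\mapsto\varphi(p,sU,s\nabla U,s\nabla^2 U)$ with a Lagrange remainder controlled by $\|\varphi\|_{C^3}\leq C\langle x\rangle$, then expanding $\mathcal{D}^2\varphi(U,U)$ by bilinearity in the nested decomposition $U=J+h$, $J=J'+U_0$ so that the cross terms produce exactly $2\mathcal{D}^2\varphi(J,h)$, $\mathcal{D}^2\varphi(h,h)$, $2\mathcal{D}^2\varphi(J',U_0)$, $\mathcal{D}^2\varphi(U_0,U_0)$. Where you diverge slightly is in how the pointwise bound $|\mathcal{D}^2\varphi(V,W)|\leq C\langle x\rangle|V|_2|W|_2$ is obtained: the paper simply cites $\|\varphi\|_{C^3}\leq C\langle x\rangle$ again, while you rederive it from the explicit second-variation formula in Corollary 3.6 by polarization. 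Both are valid; the paper's route is cleaner because it avoids re-reading the geometric formula and works uniformly for the Taylor remainder and the cross terms.

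One minor slip worth flagging: in your discussion of Corollary 3.6 you attribute the $\langle x\rangle$ growth to the factor $X_k$ in $-X_k\langle LV,\nabla^\perp_k V\rangle$, but in the paper's notation $X_k = X_*(\partial/\partial p^k)$ is a (bounded) tangent frame vector, not the position vector; the coordinate functions are denoted $x_k$. The actual linear-in-$x$ growth in that term comes from the $\nabla_{x^T}$ inside the drift operator in $L$. This does not affect the final bound, which is correct, but the stated reasoning is imprecise.
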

\begin{proof}
Let $\phi(s) = \mathcal{\varphi}(p,sU,s\nabla U, s \nabla^2 U)$ as before. Using the naive estimate $\|\mathcal{\varphi}\|_{C^3} \leq C\langle x\rangle$, we have $|\phi^{(3)}(s)| \leq C\langle x\rangle |U|_2^3$ (for any $s$). Taylor expansion about $s=0$ then gives 
\[|\phi(1) - \phi(0) - \phi'(0) - \frac{1}{2}\phi''(0)| \leq C\langle x\rangle |U|_2^3.\] Note that $\phi(0)=0$, $\phi'(0) = \mathcal{D\varphi}(U) = LU= Lh$ and $\phi''(0)=\mathcal{D}^2\mathcal{\varphi}(U,U)$. Expanding $\mathcal{D}^2\mathcal{\varphi}(U,U)$ using bilinearity and using $\|\mathcal{\varphi}\|_{C^3} \leq C\langle x\rangle$ to estimate all terms except $\mathcal{D}^2\mathcal{\varphi}(J',J')$ completes the proof. 
\end{proof}

Again, if one naively takes the $L^2$ norm of both sides, it may be difficult to control higher order terms like $\|\langle x \rangle |U|_2^3\|_{L^2} = \left(\int_\Gamma \langle x\rangle^2 |U|_2^6\right)^\frac{1}{2}$. We overcome this by first projecting to the finite dimensional space $K$ and using the explicit control on Jacobi fields to replace the $L^2$ norm with an $L^q$ norm, $q\in (1,2)$. This will have the additional benefit of reducing the degree to which $\phi$ enters the estimate, which will lead to a smaller cutoff error in later sections. 
 
Indeed, by symmetry of $L$ (or just integrating by parts), we have that $Lh\in \mathcal{K}^\perp$, so 
\begin{equation}
\label{eq:piKd2phi1}
\frac{1}{2} \| \pi_\mathcal{K}(\mathcal{D}^2\mathcal{\varphi}(J',J'))\|_{L^2} \leq \|\pi_\mathcal{K}(\phi_U)\|_{L^2} + \|\pi_\mathcal{K}(\phi_U - Lh - \frac{1}{2}\mathcal{D}^2\mathcal{\varphi}(J',J'))\|_{L^2}.
\end{equation}

We now estimate the size of the projection by the $L^q$ norm:

\begin{lemma}
\label{lem:L2Lq}
For any $q>1$ there exists $C(q)$ such that $\|\pi_\mathcal{K}(W)\|_{L^2} \leq C(q) \|W\|_{L^q}$. 
\end{lemma}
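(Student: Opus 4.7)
The crucial structural input is that $\mathcal{K}$ is finite-dimensional. This follows directly from Proposition \ref{prop:jacobi-fields}: each of the three families of Jacobi fields listed there involves only finitely many eigenvalues of $\mathring{L}$ or $\mathring{\mathcal{L}}$ (constrained by (A1-A2) to lie in $\{0, -\frac{1}{2}, -1\}$), paired with the finite-multiplicity Hermite eigenspaces $\ker(\bar{\mathcal{L}} + \frac{b}{2})$ for $b \in \{0, 1, 2\}$, together with the finitely many fields $y_i \partial_{z_\alpha}$ and $\mathring{x}_i \partial_{z_\alpha}$. So $d:= \dim \mathcal{K} < \infty$.

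My plan is to fix an $L^2$-orthonormal basis $\{J_i\}_{i=1}^d$ of $\mathcal{K}$, so that the orthogonal projection takes the explicit form $\pi_\mathcal{K}(W) = \sum_i \langle W, J_i\rangle_{L^2} J_i$ and by Pythagoras
\[
\|\pi_\mathcal{K}(W)\|_{L^2}^2 = \sum_{i=1}^d |\langle W, J_i\rangle_{L^2}|^2.
\]
I would then bound each coefficient by H\"older's inequality with exponents $q$ and $q' = q/(q-1)$:
\[
|\langle W, J_i\rangle_{L^2}| = \left|\int_\Gamma \langle W, J_i\rangle \rho \right| \leq \|W\|_{L^q} \, \|J_i\|_{L^{q'}}.
\]
Combining these two displays gives the lemma with $C(q) = \bigl(\sum_i \|J_i\|_{L^{q'}}^2\bigr)^{1/2}$, provided each $\|J_i\|_{L^{q'}}$ is finite.

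The one thing to verify is precisely this finiteness. By Corollary \ref{cor:jacobi-est}, every $J\in \mathcal{K}$ satisfies the pointwise bound $|J(x)| \leq C \langle x\rangle^2 \|J\|_{L^2(B_{r_0})}$, i.e.\ Jacobi fields on $\Gamma$ grow at most quadratically. Since the Gaussian weight $\rho$ dominates any polynomial on $\Gamma$ (using that $\Gamma$ has Euclidean volume growth), we have $\int_\Gamma \langle x\rangle^{2q'} \rho < \infty$ for any $q'<\infty$, and hence $\|J_i\|_{L^{q'}}<\infty$ for each $i$.

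There is no real obstacle in this argument; the hypothesis $q>1$ enters exactly where it is needed, namely to ensure $q'<\infty$. This restriction is sharp: for $q=1$ one would need $J_i \in L^\infty$, which fails because members of $\mathcal{K}$ such as $(y_iy_j-2\delta_{ij})\mathbf{H}$ grow unboundedly in the linear directions.
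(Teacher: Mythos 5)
Your proof is correct and takes essentially the same route as the paper: both fix a finite $L^2$-orthonormal basis of $\mathcal{K}$, use the quadratic pointwise growth bound from Corollary \ref{cor:jacobi-est}, and apply H\"older's inequality with conjugate exponent $q' < \infty$ (which is where $q>1$ is used) against the Gaussian-integrable weight $\langle x\rangle^2$. The only cosmetic difference is that you spell out the Pythagorean decomposition of $\|\pi_\mathcal{K}(W)\|_{L^2}$ explicitly and phrase H\"older directly on $W$ and $J_i$, whereas the paper first inserts $|V_i| \leq C\langle x\rangle^2$ and then applies H\"older to $|W|\langle x\rangle^2$; these are the same estimate.
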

\begin{proof}

Let $V_i$ be an orthonormal basis of the finite dimensional space $\mathcal{K}$. Then there exists $C$ such that $|V_i|\leq C\langle x\rangle^2$ for any $i$. By H\"{o}lder's inequality, for any $q>1$, we have 
\[ \left| \int_\Gamma \langle W, V_i\rangle \rho \right| \leq C \int_\Gamma |W| \langle x\rangle^2\rho  \leq C(q) \|W\|_{L^q}.\]
\end{proof}

We are now in a position to prove the main estimate of this section, which shows that nearby graphs $\Gamma_U$ are either dominated by their rotational component or controlled by their closeness to a shrinker:

\begin{theorem}
\label{thm:entire-est}
Let $\Gamma = \mathring{\Gamma}\times \mathbb{R}^{n-k}$, where $\mathring{\Gamma}$ is a closed shrinker satisfying (A1-A2) and $B_1(\mathring{\Gamma})\neq 0$. Given $\kappa\in(0,1]$, $q \in(1,\frac{3+\kappa}{3})$, there exists $\epsilon_0>0$ and $C=C(\kappa,q)$ such that if $U$ is a compactly supported normal vector field on $\Gamma$ with $\|U\|_{C^2}\leq \epsilon_0$ and $U_0 = \pi_{\mathcal{K}_0}(U)$, then 
  \begin{equation}
 \|U\|_{L^2}^2 \leq C (\|U_0\|_{L^2}^2 + \|\phi_U\|_{L^q} + \|\phi_U\|_{W^{1,2}}^\frac{2}{q} + \|\phi_U\|_{L^2}^\frac{6}{3+\kappa}).
 \end{equation} 
\end{theorem}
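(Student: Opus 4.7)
The plan is to implement the second-order expansion-obstruction scheme of Section \ref{sec:deformation}. Decompose $U = U_0 + J' + h$, where $U_0 = \pi_{\mathcal{K}_0}(U)$, $J' = \pi_{\mathcal{K}_1}(U) \in \mathcal{K}_1$, and $h = \pi_{\mathcal{K}^\perp}(U)$; these three pieces are pairwise $L^2$-orthogonal, so
\[ \|U\|_{L^2}^2 = \|U_0\|_{L^2}^2 + \|J'\|_{L^2}^2 + \|h\|_{L^2}^2, \]
reducing the task to estimating $\|J'\|_{L^2}^2$ and $\|h\|_{L^2}^2$ by the right-hand side. The bound on $h$ is immediate from Proposition \ref{prop:hW22}: we get $\|h\|_{L^2}^2 \leq C(\|\phi_U\|_{L^2}^2 + \|U\|_{L^2}^4)$, and since $\|U\|_{L^2}^2 \leq C\epsilon_0^2$ by the $C^0$ bound and finite Gaussian volume of $\Gamma$, the $\|U\|_{L^2}^4$ piece can be absorbed into $\|U\|_{L^2}^2$ once combined with the $J'$ estimate.

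For $\|J'\|_{L^2}^2$, start from the second-order expansion $\phi_U = Lh + \tfrac{1}{2}\mathcal{D}^2\varphi(J', J') + R$ from Lemma \ref{lem:ord2exp}. Since $L$ is self-adjoint and $h \perp \mathcal{K}$, the term $Lh$ lies in $\mathcal{K}^\perp$, so applying $\pi_{\mathcal{K}}$ yields $\tfrac{1}{2}\pi_{\mathcal{K}}(\mathcal{D}^2\varphi(J',J')) = \pi_{\mathcal{K}}(\phi_U) - \pi_{\mathcal{K}}(R)$. Combining the obstruction from Proposition \ref{prop:d2phiEst} with Lemma \ref{lem:L2Lq} (which replaces $L^2$ by $L^q$ for $q > 1$) gives
\[ \tfrac{\delta}{2}\|J'\|_{L^2}^2 \leq C(q)\bigl(\|\phi_U\|_{L^q} + \|R\|_{L^q}\bigr), \]
so it remains to control $\|R\|_{L^q}$ term-by-term via the pointwise estimate of Lemma \ref{lem:ord2exp}.

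The cubic term $\langle x\rangle |U|_2^3$ is handled by estimate (\ref{eq:W23qU}) of Proposition \ref{prop:hW22} with $m=1$, producing $\|\phi_U\|_{L^2}^{6/(3+\kappa)} + \|U\|_{L^2}^3$; this is the only source of $\kappa$-dependence and explains the constraint $q < (3+\kappa)/3$. For $\langle x\rangle |J|_2 |h|_2$, combine the polynomial Jacobi growth $|J|_2 \leq C\langle x\rangle^2 \|J\|_{L^2}$ from Corollary \ref{cor:jacobi-est} with H\"older against the $m=1$ weighted $L^2$-estimate for $h$, producing a contribution $\leq C\|U\|_{L^2}(\|\phi_U\|_{W^{1,2}} + \|U\|_{L^2}^2)$; the $\langle x\rangle |h|_2^2$ term is handled analogously, yielding $\|\phi_U\|_{W^{1,2}}^2 + \|U\|_{L^2}^4$. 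The $U_0$-cross-terms $\langle x\rangle |J'|_2 |U_0|_2$ and $\langle x\rangle |U_0|_2^2$ are immediate from the polynomial bound $|U_0|_2 \leq C\langle x\rangle \|U_0\|_{L^2}$ (Corollary \ref{cor:rotation-est}), giving contributions $\|J'\|_{L^2}\|U_0\|_{L^2}$ and $\|U_0\|_{L^2}^2$ respectively.

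Finally, Young's inequality absorbs $\|J'\|_{L^2}\|U_0\|_{L^2}$ into $\tfrac{\delta}{4}\|J'\|_{L^2}^2 + C\|U_0\|_{L^2}^2$, and the higher-order $\|U\|_{L^2}^k$ ($k\geq 3$) remainders are absorbed into $\|U\|_{L^2}^2$ via $\epsilon_0$-smallness. For the mixed terms $\|U\|_{L^2}\|\phi_U\|_{W^{1,2}}$ and $\|\phi_U\|_{W^{1,2}}^2$, one reduces to the case $\|\phi_U\|_{W^{1,2}} \leq 1$ (outside of which the inequality is trivial from $\|U\|_{L^2}^2 \lesssim \epsilon_0^2$), whence $\|\phi_U\|_{W^{1,2}}^2 \leq \|\phi_U\|_{W^{1,2}}^{2/q}$ since $2/q < 2$. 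The main technical obstacle is the bookkeeping of weighted $L^q$ norms in $\|R\|_{L^q}$, especially for the cubic term: naive $L^2$ control would fail because of polynomial weights produced by the noncompact $\mathbb{R}^{n-k}$ factor, and it is exactly this failure that forces the passage from $L^2$ to $L^q$ via Lemma \ref{lem:L2Lq} on the obstruction side.
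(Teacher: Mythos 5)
Your proposal follows essentially the same route as the paper's proof: decompose $U = U_0 + J' + h$, bound $h$ via Proposition~\ref{prop:hW22}, then control $J'$ by projecting the second-order Taylor expansion (Lemma~\ref{lem:ord2exp}) onto $\mathcal{K}$, invoking the obstruction (Proposition~\ref{prop:d2phiEst}) and the $L^q$-to-$L^2$ comparison for the projection (Lemma~\ref{lem:L2Lq}), and finally estimating the remainder term-by-term using the polynomial Jacobi/rotation growth bounds, the weighted estimates of Proposition~\ref{prop:hW22}, and H\"older, before absorbing the small quadratic terms. This matches the paper's argument, with only cosmetic differences in the choice of weight $m$ in the intermediate H\"older steps (the paper happens to work the cutoff algebra slightly differently and states a slightly weaker intermediate form of the $\|J'\|_{L^2}\|U_0\|_{L^2}$ cross-term as $\|U\|_{L^2}\|U_0\|_{L^2}$), none of which affects the conclusion.
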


\begin{proof}

By Lemma \ref{lem:L2Lq}, (\ref{eq:piKd2phi1}) implies that
\begin{equation}
\label{eq:piKd2phi2}
\frac{1}{2} \| \pi_\mathcal{K}(\mathcal{D}^2\mathcal{\varphi}(J',J'))\|_{L^2} \leq C\left(\|\phi_U\|_{L^q} + \|\phi_U - Lh - \frac{1}{2}\mathcal{D}^2\mathcal{\varphi}(J',J')\|_{L^q}\right). 
\end{equation}

We use Lemma \ref{lem:ord2exp} to estimate the last term:

\begin{equation} \|\phi_U - Lh - \frac{1}{2}\mathcal{D}^2\mathcal{\varphi}(J',J')\|_{L^q}  \leq  C\left(\int_\Gamma  \langle x\rangle^q \left(|U|_2^{3q} + |J|_2^q |h|_2^q + |h|_2^{2q} + |J'|_2^q | U_0|_2^q + |U_0|_2^{2q} \right)\rho\right)^\frac{1}{q} .
\end{equation}

For terms involving Jacobi fields, we use Corollaries \ref{cor:jacobi-est} and \ref{cor:rotation-est} respectively to bound $|J|_2, |J'|_2 \leq C\langle x\rangle^2 \|U\|_{L^2}$ and $|U_0|_2 \leq C \langle x\rangle \|U_0\|_{L^2}$.

For the first term, estimate (\ref{eq:W23qU}) from the first order expansion gives $\int_\Gamma \langle x\rangle^q |U|_2^{3q} \leq C(\|\phi_U\|_{L^2}^\frac{6q}{3+\kappa} + \|U\|_{L^2}^{3q}).$ For the second term, H\"{o}lder's inequality and estimate (\ref{eq:W22h}) gives \[\|\langle x\rangle |J|_2|h|_2\|_{L^q} \leq C\|U\|_{L^2} \|\langle x\rangle^3 |h|_2 \|_{L^q} \leq C(q)\|U\|_{L^2} \|h\|_{W^{2,2}} \leq C\|U\|_{L^2}(\|\phi_U\|_{L^2} + \|U\|_{L^2}^2) .\] 

For the third term, we first use Minkowski's inequality, so that for small enough $\epsilon_0$, \[|h|_2^{2q} \leq |h|_2^2( |J|_2^{2q-2} + |U|_2^{2q-2}) \leq (\|U\|_{L^2}^{2q-2} +\epsilon_0^{2q-2})|h|_2^2 \leq |h|_2^2,\] then estimate (\ref{eq:W22h}) again gives $\int_\Gamma \langle x\rangle^q |h|_2^{2q}\leq \int_\Gamma \langle x\rangle^2 |h|_2^2 \leq C(\|\phi_U\|_{W^{1,2}}^2+ \|U\|_{L^2}^4)$. 

Together, these estimates give

 \begin{equation}
 \begin{split}
\frac{1}{C}\|J'\|_{L^2}^2\leq{} &   \|\phi_U\|_{L^q} + \|\phi_U\|_{L^2}^\frac{6}{3+\kappa} + \|U\|_{L^2}^3  + \|U\|_{L^2}( \|\phi_U\|_{L^{2}} + \|U\|_{L^2}^2) \\& + \|\phi_U\|_{W^{1,2}}^\frac{2}{q} + \|U\|_{L^2}^\frac{4}{q}  + \|U\|_{L^2}\|U_0\|_{L^2} + \|U_0\|_{L^2}^2.
\end{split}
\end{equation}

Now, for $\epsilon\in(0,1)$ to be chosen later, we estimate $\|U\|_{L^2} \|\phi_U\|_{L^2} \leq \epsilon \|U\|_{L^2}^2 + \frac{1}{4\epsilon} \|\phi_U\|_{L^2}^2$ and $\|U\|_{L^2} \|U_0\|_{L^2} \leq \epsilon \|U\|_{L^2}^2 + \frac{1}{4\epsilon} \|U_0\|_{L^2}^2$. For small enough $\epsilon_0$ we certainly have $ \|\phi_U\|_{L^q}, \|\phi_U\|_{W^{1,2}}<1$, so lower powers of $\phi_U$ dominate (note that $3q<3+\kappa\leq 4$). For powers of $U$, we keep more careful track, using $\|U\|_{L^2} \leq \sqrt{\lambda(\Gamma)} \epsilon_0$. This gives a simplified estimate 
 
 \begin{equation}
 \label{eq:J'-final}
 \|J'\|_{L^2}^2 \leq C(\kappa,q) (C_\epsilon\|U_0\|_{L^2}^2 +(\epsilon+\epsilon_0) \|U\|_{L^2}^2 + \|\phi_U\|_{L^q} + \|\phi_U\|_{W^{1,2}}^\frac{2}{q} +C_\epsilon \|\phi_U\|_{L^2}^\frac{6}{3+\kappa}). 
 \end{equation}
 
Finally, we will use (\ref{eq:J'-final}) to estimate $J'$ in the expansion \begin{equation}\label{eq:U-final}\|U\|_{L^2}^2 = \|U_0\|^2_{L^2} + \|h\|_{L^2}^2 + \|J'\|_{L^2}^2 \leq \|U_0\|_{L^2}^2  + C(\|\phi_U\|_{L^2}^2  +\|U\|_{L^2}^4) + \|J'\|_{L^2}^2. \end{equation}
Indeed, if we choose $\epsilon+\epsilon_0< \frac{1}{C(\kappa,q)}$
%\textcolor{red}{($\epsilon_0$ now depends on $\kappa,q$)}
 then the $\|U\|_{L^2}^2$ term in (\ref{eq:J'-final}) may be absorbed into the left hand side of (\ref{eq:U-final}). This gives the final estimate

\begin{equation}
 \|U\|_{L^2}^2 \leq C'(\kappa,q) (\|U_0\|_{L^2}^2 + \|\phi_U\|_{L^q} + \|\phi_U\|_{W^{1,2}}^\frac{2}{q}+ \|\phi_U\|_{L^2}^\frac{6}{3+\kappa}).
\end{equation} 
\end{proof}

 \section{Rotation}
\label{sec:rotation}

In this section we analyse changes in graph representation upon rotation of the base submanifold. The notation follows that of Section \ref{sec:rot0}. 

The goal is to show that given two sufficiently close submanifolds $\Sigma, \Gamma$, one may write $\Sigma$ as a normal graph $V$ over a rotation of $\Gamma$ so that $U$ is orthogonal to the space of rotation vector fields $\mathcal{K}_0$ on $\Gamma$. We also need to perform this rotation in a quantitative way, estimating the size of $V$ relative to the original distance between $\Sigma,\Gamma$. When $\Gamma$ is compact, this is an easy consequence of the slice theorem for group actions (see \cite{ELS18}, also \cite{SZ20}). 

For noncompact $\Gamma$, we only assume closeness on a large ball $B_R$, and it is also desirable to obtain estimates that depend explicitly on $R$. This creates a number of technical issues so we describe the change of graph operation by a somewhat arduous process, although we will be relatively cavalier about the domains of the graphs. We first write $\Sigma$ as a graph $U$ over $\Gamma$ and choose the rotation of $\Gamma$ to be exactly the one generated by $\pi_{\mathcal{K}_0}(U)$. The rotated graph $V$ will only be \textit{almost} orthogonal to rotations, but since we choose the appropriate rotation at the linear level, we are able to show that $\pi_{\mathcal{K}_0}(V)$ is higher order. These results are summarised in Proposition \ref{prop:graph-rot}. 

\subsection{Graphs over change of base submanifold}

Let $\Sigma^n$ be a smooth compact embedded submanifold of $\mathbb{R}^N$, denote by $C^k(N\Sigma)$ the space of $C^k$ normal vector fields on $\Sigma$, and let $C^k_\delta(N\Sigma)$ be the subset with $C^k$ norm at most $\delta$. Fix an orthonormal frame $\{e_i\}$ on $T\Sigma$. 

Consider the map $\Phi: \Sigma \times \Sigma\times C^2(N\Sigma) \times C^3(N\Sigma)\times \mathbb{R}^N \to \mathbb{R}^N \times \mathbb{R}^n$ defined by \begin{equation}\label{eq:impPhi}\Phi(p,q,U,V,W) = (X_V(p) +W -  X_U(q) , (\langle W, (X_V)_*e_i(p))_{i=1,\cdots,n}).\end{equation} Note $\Phi$ is defined so that $\Phi=0$ means that the graphs of $W$ over $\Sigma_V$ (at $X_V(p)$) and of $U$ over $\Sigma$ (at $q$) coincide, and $W$ is normal to $\Sigma_V$ at $X_V(p)$. 

The map $\Phi$ is $C^2$ and for any $p_0$ in the interior of $\Sigma$ we calculate \[(D\Phi)_{(p_0,p_0,0,0,0)}(Y,Z,U',V',W') =( Y +  V'(p_0) + W' -Z - U'(p_0), (\langle W', e_i(p_0)\rangle)_{i=1,\cdots,n}  ).\] Here $Y,Z \in T_{p_0}\Sigma$. In particular $(D\Phi)_{(p_0,p_0,0,0,0)}(0,Z ,0,0,W')$ is invertible, and by the implicit function theorem there are (unique) $C^2$ maps $\mathcal{Q}=\mathcal{Q}^{\Sigma,p_0},\mathcal{W}=\mathcal{W}^{\Sigma,p_0}$ defined on a neighbourhood of $(p_0,0,0)$ in $\Sigma\times C^2(N\Sigma)\times C^3(N\Sigma)$ so that \begin{equation}\label{eq:local-QW}\Phi(p, \mathcal{Q}(p,U,V) , U,V, \mathcal{W}(p,U,V))=0.\end{equation}

Globally, on a submanifold $\Gamma$ we may patch together the local constructions above on each intrinsic ball $\Sigma = B^\Gamma_r(p)$ using uniqueness (if $\Gamma$ is immersed, we simply take $r$ small enough so that $\Sigma$ is an embedded disk). The resulting constants will be uniform if $\Gamma$ is closed, or a cylinder over a closed submanifold. We summarise this as follows:

\begin{proposition}
\label{prop:global-graph}
Fix a closed submanifold $\mathring{\Gamma}$ and set $\Gamma =\mathring{\Gamma}\times \mathbb{R}^{n-k}$. There exist $r\in(0,\frac{1}{2})$, $K$ and a map $\mathcal{W}(p,U,V)$, such that for any $\epsilon>0$, there is a $\delta>0$ so that for any $p\in\Gamma\cap B_{R-1/2}$ and normal vector fields $U,V$ on $\Gamma\cap B_R$ with $\|U\|_{C^2}, \|V\|_{C^3}<\delta$, the following hold:
\begin{itemize}
\item There exists $p_0$ so that $p\in B^\Gamma_{r/2}(p_0)$ and $\mathcal{W}(p,U,V) = \mathcal{W}^{\Sigma,p_0}(p,U|_\Sigma, V|_\Sigma)$, where $\Sigma = B^\Gamma_r(p_0)$ and the map $\mathcal{W}^{\Sigma,p_0}: B^\Gamma_{r/2}(p_0) \times C^2_\delta(N\Sigma) \times C^3_\delta(N\Sigma)\to \mathbb{R}^N$ satisfies $\|\mathcal{W}^{\Sigma,p_0}\|_{C^2} \leq K$; 
\item $D\mathcal{W}_{(p,0,0)}(Y, U',V') = - V'(p) + U'(p)$;
\item $W(p)=\mathcal{W}(p,U,V)$ is normal to $\Gamma_V$ at $X_V(p)$;
\item For any $s\leq R$ we have $\Gamma_U\cap B_{s-2} \subset (\Gamma_V)_W \cap B_{s-1} \subset \Gamma_U\cap B_s$. 
\item As a map on $\Gamma\cap B_{R-1/2}$ we have $\|W\|_{C^2}<\epsilon$.

\end{itemize}
\end{proposition}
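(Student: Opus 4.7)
\bigskip

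\noindent\textbf{Proof plan for Proposition \ref{prop:global-graph}.}

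The plan is to globalize the local implicit function construction (\ref{eq:local-QW}) outlined in the paragraph preceding the proposition, exploiting the product structure $\Gamma = \mathring{\Gamma}\times\mathbb{R}^{n-k}$ to make all constants uniform. First I would fix a finite cover of the closed factor $\mathring{\Gamma}$ by intrinsic balls $B^{\mathring{\Gamma}}_{r/4}(\mathring{p}_\alpha)$, $\alpha=1,\dots,M$, chosen small enough that each $B^{\mathring{\Gamma}}_r(\mathring{p}_\alpha)$ is an embedded disk. For an arbitrary $p_0=(\mathring{p}_\alpha, y_0)\in\Gamma$, I would apply the local implicit function theorem to the map $\Phi$ defined in (\ref{eq:impPhi}) at the point $(p_0,p_0,0,0,0)$, using the restriction $\Sigma = B^\Gamma_r(p_0) = B^{\mathring{\Gamma}}_r(\mathring{p}_\alpha)\times B^{\mathbb{R}^{n-k}}_r(y_0)$. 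This produces the $C^2$ maps $\mathcal{Q}^{\Sigma,p_0}$ and $\mathcal{W}^{\Sigma,p_0}$ on a neighbourhood of $(p_0,0,0)$ in $\Sigma\times C^2(N\Sigma)\times C^3(N\Sigma)$. Translation in the Euclidean factor $\mathbb{R}^{n-k}$ is an isometry of $\Gamma$ commuting with $\Phi$, so the neighbourhood size and the constant $K$ produced by IFT depend only on $\mathring{p}_\alpha$ and not on $y_0$. Taking the worst of the $M$ resulting constants yields uniform $r$, $K$ and $\delta$.

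Second, I would verify the derivative formula. Differentiating the defining identity $\Phi(p,\mathcal{Q},U,V,\mathcal{W})=0$ at $(p,0,0)$ in a direction $(Y,U',V')$, writing $D\mathcal{Q}=X_*Z+Z^\perp$ for some $Z\in T_p\Gamma$ and $Z^\perp\in N_p\Gamma$ (though $\mathcal{Q}$ is $\Gamma$-valued so $Z^\perp=0$), the first component gives $V'(p)+D\mathcal{W}(Y,U',V')-X_*Z-U'(p)=0$ while the second component forces $D\mathcal{W}(Y,U',V')$ to be normal to $\Gamma$ at $p$. Splitting into tangential and normal parts yields $Z=0$ and $D\mathcal{W}_{(p,0,0)}(Y,U',V')=U'(p)-V'(p)$, which is the claimed formula.

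Third, I would patch the local maps into a global $\mathcal{W}(p,U,V)$. Given $p\in\Gamma\cap B_{R-1/2}$, choose $p_0$ among the natural lifts of $\mathring{p}_\alpha$-chart centres to $\mathbb{R}^{n-k}$ so that $p\in B^\Gamma_{r/2}(p_0)$. For $\|U\|_{C^2},\|V\|_{C^3}<\delta$ sufficiently small (depending on the uniform $r,K$), define $\mathcal{W}(p,U,V):=\mathcal{W}^{\Sigma,p_0}(p,U|_\Sigma,V|_\Sigma)$. Uniqueness in IFT ensures independence of the choice of chart: on overlaps, any value produced satisfies $\Phi=0$, and the quantitative $C^2$ bound on $\mathcal{W}^{\Sigma,p_0}$ combined with smallness of the inputs forces all such solutions to lie in the IFT uniqueness neighbourhood. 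The $C^2$ smallness assertion $\|W\|_{C^2}<\epsilon$ then follows from the derivative formula and $\mathcal{W}|_{U=V=0}\equiv 0$ by shrinking $\delta$.

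The main technical obstacle is the last bullet, namely the graph-inclusion chain
\[\Gamma_U\cap B_{s-2}\subset (\Gamma_V)_W\cap B_{s-1}\subset \Gamma_U\cap B_s,\]
because it is a statement about the images of the three graphs and must hold on a scale (the ball $B_s$) unrelated to the local chart scale $r$. I would handle this as follows. By the defining equation $\Phi=0$, for each $p$ in the domain the point $X_V(p)+\mathcal{W}(p,U,V)$ lies on $\Sigma_U$, so the image of $(\Gamma_V)_W$ is contained in the image of $\Gamma_U$; the buffers of size $1$ in the radii compensate for the $O(\|U\|_{C^0}+\|V\|_{C^0}+\|W\|_{C^0})$ displacement between source and image, which is $<1$ for $\delta$ small. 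For the reverse direction, given $q\in\Gamma\cap B_{s-2}$ with graph point $X_U(q)\in\Gamma_U\cap B_{s-2}$, use the local IFT on a chart around the nearest lift of $q$ (which lies in $B_{R-1/2}$ if $s\le R$) to produce a $p$ with $X_V(p)+\mathcal{W}(p,U,V)=X_U(q)$; this $p$ lies in a chart about $q$ so that $X_V(p)$ is within $2$ of $X_U(q)$, giving containment in $B_{s-1}$. Keeping careful track of the buffers is the bookkeeping point the paper flags as "cavalier"; the key IFT-level facts I need are all furnished by the uniform local construction, so the patching and buffer bookkeeping is the only real obstacle.
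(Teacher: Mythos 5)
Your proposal is correct and follows essentially the same route as the paper: localize via the implicit function theorem on balls whose geometry is uniform by compactness of $\mathring{\Gamma}$ and translation invariance in the $\mathbb{R}^{n-k}$ factor, patch by IFT uniqueness, and track the graph-inclusion buffers by hand; the paper's own proof is a terse version of exactly this. One small correction in your implicit-differentiation step: the first component of $D\Phi=0$ should read $X_*Y + V'(p) + D\mathcal{W} - X_*Z - U'(p) = 0$ (you dropped the $X_*Y$ term coming from the $p$-derivative of $X_V(p)$), so the tangential part yields $Z = Y$, i.e.\ $D\mathcal{Q}(Y,0,0)=Y$ (consistent with $\mathcal{Q}(\cdot,0,0)=\id$), not $Z=0$; the normal part and hence the claimed formula $D\mathcal{W} = U'(p)-V'(p)$ are unaffected.
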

\begin{proof}
All but the last point follow immediately from the local construction. The last point follows by continuity, noting that $q=\mathcal{Q}(\cdot,U,V)$ and $W=\mathcal{W}(\cdot,U,V)$ are perturbations of $\mathcal{Q}(\cdot,0,0)=\id$ and $\mathcal{W}(\cdot,0,0)\equiv 0$. 
\end{proof}

The key is that the above construction does not depend on $R$. 

\subsection{Rotated cylinders}

Here we write a rotated cylinder as a graph over the original.

\begin{proposition}
\label{prop:rot-cyl}
Let $\Gamma = \mathring{\Gamma}\times \mathbb{R}^{n-k}$. There is a neighbourhood $\mathcal{U}$ of the identity in $\mathrm{SO}(N)$ and a smooth map $\mathcal{V}: \mathcal{U} \times \Gamma \to \mathbb{R}^N$ such that:
\begin{itemize}
\item For each $\Theta\in \mathcal{U}$, the vector field $V_\Theta:= \mathcal{V}(\Theta,\cdot)$ is normal on $\Gamma$ and its graph coincides with the rotation $\Theta\cdot\Gamma$;
\item For any $m$ and at any $\Theta,p$, we have $\|\mathcal{V}\|_{C^m} \leq c_m d(\Theta,\id) \langle X(p)\rangle$. 
\end{itemize}
\end{proposition}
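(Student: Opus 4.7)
The plan is to construct $\mathcal{V}$ by solving, for each pair $(\Theta, q)$ with $\Theta$ near the identity, the equation $\Theta(p) - q \in N_q \Gamma$ for a unique $p = p(\Theta, q) \in \Gamma$, and then setting $\mathcal{V}(\Theta, q) := \Theta(p(\Theta, q)) - q$. At $\Theta = \id$ the trivial solution is $p = q$ with $\mathcal{V} = 0$, and the linearization of the coupled system at this base point in $(p, V) \in T_q \Gamma \oplus N_q \Gamma$ is the tangent-normal splitting isomorphism $\mathbb{R}^N \cong T_q \Gamma \oplus N_q \Gamma$. The implicit function theorem therefore produces a unique smooth local solution.

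The main difficulty is obtaining uniformity in $q$ despite the noncompactness of $\Gamma$: the initial residual $\Theta(q) - q$ grows linearly in $\langle q\rangle$, so one cannot naively iterate IFT around $(\Theta, p = q, V = 0)$ with a $q$-independent neighborhood. I would circumvent this by exploiting the cylindrical block structure. Decomposing $\mathbb{R}^N = \mathbb{R}^{k+m}_{\mathring{x}} \oplus \mathbb{R}^{n-k}_y \oplus \mathbb{R}^{N-m-n}_z$, write $\Theta$ in block form $(\Theta_{IJ})$, $p = (\mathring{p}, y', 0) \in \Gamma$, and $q = (\mathring{p}_0, y_0, 0)$. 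The equation $\Theta(p) = q + V$ with $V \in N_q\Gamma = N_{\mathring{p}_0}\mathring{\Gamma} \oplus \{0\} \oplus \mathbb{R}^{N-m-n}$ then splits into three block equations: the $y$-equation, linear in $y'$ with invertible leading matrix $\Theta_{yy}$ for $\Theta$ near $\id$, gives $y' = \Theta_{yy}^{-1}(y_0 - \Theta_{y\mathring{x}}\mathring{p})$ explicitly; the $z$-equation simply defines the $z$-component of $V$; and after substitution, the $\mathring{x}$-equation modulo $N_{\mathring{p}_0}\mathring{\Gamma}$ reduces to a single equation on the \emph{compact} cross-section $\mathring{\Gamma}$. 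The ambient noncompactness has thereby been isolated in a linear step, and the nonlinear solvability occurs only on $\mathring{\Gamma}$, where the implicit function theorem provides a smooth solution $\mathring{p}(\Theta, \mathring{p}_0, y_0)$ on a uniform neighborhood $\mathcal{U}$ of the identity.

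For the quantitative estimate, I would differentiate the defining equation. At first order in $\Theta = \exp(s\theta)$ with $s = 0$, the equation yields $\partial_s \mathcal{V}|_{s=0}(q) = \Pi_q(\theta(q)) = J_\theta(q)$, the rotation Jacobi field, which has linear growth $|J_\theta(q)| \leq |\theta|\langle X(q)\rangle$. The leading-order bound $|\mathcal{V}(\Theta, q)| \leq c_0\, d(\Theta,\id)\langle X(q)\rangle$ then follows from smooth dependence on $\Theta$. Higher $C^m$ estimates in $p$ are obtained by differentiating the defining relation $\Theta(p(\Theta,q)) = q + \mathcal{V}(\Theta, q)$ repeatedly in $q$, using that the rotation $\theta(\cdot)$ is linear in its argument (constant first derivative, vanishing higher derivatives), and that the tangent and normal frames, Christoffel symbols, and normal connection on $\Gamma$ have uniformly bounded $C^m$ norms — a consequence of the compactness of $\mathring{\Gamma}$ and flatness of the $\mathbb{R}^{n-k}$-factor.

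The main obstacle is the uniform solvability of the reduced equation on $\mathring{\Gamma}$ as the parameter $y_0$ ranges over $\mathbb{R}^{n-k}$: the inhomogeneous term $\Theta_{\mathring{x}y}\Theta_{yy}^{-1} y_0$ scales linearly with $|y_0|$ and with $d(\Theta, \id)$. One must therefore argue solvability using both the compact implicit function theorem on $\mathring{\Gamma}$ and the specific algebraic structure of how $y_0$ enters. The payoff of the block decomposition is precisely that it quarantines the noncompact direction in a linear equation, which is exactly where the cylindrical symmetry of $\Gamma$ plays a decisive role.
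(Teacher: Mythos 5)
Your block-decomposition route is genuinely different from the paper's. The paper constructs $\mathcal{V}$ on the compact piece $\mathring{\Gamma}\times B^{n-k}_1$ via the implicit function theorem and then extends to all of $\Gamma$ by using the translation invariance of $\Theta\cdot\Gamma$: the slice $(\Theta\cdot\Gamma)\cap S_{\bar{\pi}(\Theta(y))}$ is a translate of $(\Theta\cdot\Gamma)\cap S_0$ by $\bar{\pi}^\perp(\Theta(y))$, a linear quantity. You instead separate variables algebraically in a block decomposition of $\Theta$ and isolate the noncompactness in the linear $y$-equation, reducing the nonlinear solvability to an IFT on the compact cross-section $\mathring{\Gamma}$. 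Both strategies correctly shift the nonlinear work to the compact factor, and your derivation of the estimate (first variation gives $J_\theta$, then differentiate the defining relation using the bounded geometry of $\mathring{\Gamma}\times\mathbb{R}^{n-k}$) matches the paper's conclusion.

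However, you stop exactly where the uniformity in $y_0$ must be established, and that is a genuine gap — one that cannot be closed as stated. The forcing term $\Theta_{\mathring{x}y}\Theta_{yy}^{-1}y_0$ in your reduced equation has magnitude comparable to $d(\Theta,\id)\,|y_0|$, and for any fixed $\Theta\neq\id$ this is unbounded as $|y_0|\to\infty$. In fact the graph representation of $\Theta\cdot\Gamma$ over $\Gamma$ genuinely fails in that regime: the tilted cylinder eventually leaves any fixed tubular neighborhood of $\Gamma$, and the normal fiber $p+N_p\Gamma$ at a far-out $p$ can miss $\Theta\cdot\Gamma$ entirely (already visible for $\mathbb{S}^1\times\mathbb{R}\subset\mathbb{R}^3$). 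The statement should therefore be read with the implicit smallness constraint $d(\Theta,\id)\langle X(p)\rangle<\delta_0$ — exactly the constraint the paper makes explicit in the companion change-of-basepoint proposition via the set $\mathcal{S}_\delta=\{(p,\Theta):\langle X(p)\rangle\, d(\Theta,\id)<\delta\}$, and enforces in the graph-rotation proposition by taking $d(\Theta,\id)<\delta/R$ on $B_R$. Once you impose that constraint, your forcing term is uniformly $O(\delta_0)$, the IFT on $\mathring{\Gamma}$ applies with a $y_0$-independent modulus, and your proof closes. You identified the obstruction correctly; the missing step is to state and use the constraint rather than leave it as an open worry.
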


\begin{proof}

We may certainly construct the desired map $\mathcal{V}$ on any bounded portion $\mathring{\Gamma}\times B^{n-k}_{r_0}$ by applying the implicit function theorem locally as above. For instance one may consider the smooth map $\Psi(p,V, q, \Theta)=( X(p)+V-\Theta\cdot X(q) , (\langle V, X_i(p))_{i=1,\cdots,n})$ on each $\Sigma = B^\Gamma_r(p)$, and solve $\Psi=0$ for $V,q$ as before. 

The construction extends, using translation invariance, to all of $\Gamma =\mathring{\Gamma}\times \mathbb{R}^{n-k}$ as follows. Recall that $\bar{\pi}$ is the projection $\mathbb{R}^N\to \mathbb{R}^{n-k}$ to the linear directions. Set $\bar{\pi}^\perp(x) = x-\bar{\pi}(x)$. 

For $\Theta$ in a neighbourhood $\mathcal{U}$ of the identity, $\bar{\pi}\circ\Theta$ restricts to a linear isomorphism $\mathbb{R}^{n-k}\to\mathbb{R}^{n-k}$. Consider $p=(\mathring{p},y) \in \Gamma = \mathring{\Gamma}\times \mathbb{R}^{n-k}$. Take $q=(\mathring{q}, z_0)$ so that (as constructed above) $X(\mathring{p},0)+V_\Theta(\mathring{p},0) = \Theta \cdot X(q)$. Let $z$ be such that $\bar{\pi}(\Theta(z))=y$; then \[\begin{split}\Theta\cdot X(\mathring{q}, z_0 + z) &= \Theta \cdot (X(q) + z) = X(\mathring{p},0)+V_\Theta(\mathring{p},0) + \Theta(z) \\&= X(p) + V_\Theta(\mathring{p},0) + \bar{\pi}^\perp(\Theta(z)).\end{split}\]
In particular, we may extend $V_\Theta$ by defining $V_\Theta(\mathring{p},y) = V_\Theta(\mathring{p},0) + \bar{\pi}^\perp(\Theta\cdot (\bar{\pi}\circ\Theta)^{-1}(y))$. 

This extension clearly depends smoothly on $\Theta$, and grows linearly in $y$. The implies the desired estimates for sufficiently small $\mathcal{U}$. 

%
%
%Then for $y\in\mathbb{R}^{n-k}$ we have \begin{equation}
%\label{eq:rot-extend}
%\begin{split}
%(\Theta\cdot \Gamma) \cap S_0  &= \Theta \cdot(\Gamma \cap \Theta^{-1}\cdot S_0) \\&=\Theta(\Gamma \cap \Theta^{-1}\cdot S_0 +y) \\&= (\Theta\cdot\Gamma) \cap S_{\bar{\pi}(\Theta(y))}  +  \bar{\pi}^\perp( \Theta(y)).
%\end{split}
%\end{equation} 
%For $\Theta$ in a neighbourhood $\mathcal{U}$ of the identity, $\bar{\pi}\circ\Theta$ is a linear isomorphism $\mathbb{R}^{n-k}\to\mathbb{R}^{n-k}$, and in particular $| \bar{\pi}^\perp( \Theta(y))| \leq C| \bar{\pi}( \Theta(y))|$. Using (\ref{eq:rot-extend}) thus extends the graph representation $\mathcal{V}$ to all slices by a linear translation, and the desired estimates follow. 
\end{proof}

Recall that we also defined the normal fields $J_\theta (p)  := \Pi( \pr_s |_{s=0} \exp(s\theta)(p))$ for $\theta\in\mathfrak{so}(N)$ in Section \ref{sec:rot0}. Clearly, $J_\theta$ is the linearisation of $V_{\Theta}$, that is $\pr_s|_{s=0} V_{\exp (s\theta)} = J_\theta$. 

\subsubsection{Change of basepoint}

We also briefly account for the change in basepoint between the parametrisations of $\Theta\cdot \Gamma$ as a rotation of $\Gamma$, and as a graph over $\Gamma$. 

First consider a compact embedded submanifold $\Sigma$ and let $\pi_\Sigma$ be the nearest point projection to $\Sigma$. For any interior point $x$ of $\Sigma$, there is some ball $B_\delta(x)$ on which $\pi_\Sigma$ is smooth. For $\Theta\in \mathrm{SO}(N)$ in a neighbourhood of the identity, its action as a diffeomorphism on $\mathbb{R}^N$ satisfies $\|\Theta-\id\|_{C^m} \leq c_m d(\Theta,\id) \langle x\rangle$. In particular, if $d(\Theta, \id) \langle x\rangle$ is small enough, then $|\Theta(x)-x|<\delta$ and $p=\pi_\Sigma(\Theta(x))$ satisfies $X_{V_\Theta}(p) = \Theta(x)$. 

Again, on $\Gamma = \mathring{\Gamma}\times \mathbb{R}^{n-k}$ we may patch together this local construction on intrinsic balls $\Sigma = B^\Gamma_r(p_0)$, each of which is an embedded disk. The projections $\pi_\Sigma$ will then satisfy a uniform $C^m$ bound, so the change of basepoint has small $C^m$ norm so long as $d(\Theta,\id) \langle x\rangle$ is small. We summarise this as follows:
\begin{proposition}
\label{prop:basepoint}
Fix a closed submanifold $\mathring{\Gamma}$ and set $\Gamma = \mathring{\Gamma}\times \mathbb{R}^{n-k}$. 

Let $\mathcal{S}_\delta = \{(p,\Theta)\in \Gamma \times \mathrm{SO}(N) | \langle X(p)\rangle d(\Theta,\id) < \delta\}$. There exist $r\in(0,\frac{1}{2})$, $K$, $\delta_0>0$ and a smooth map $\mathcal{P}: \mathcal{S}_{\delta_0} \rightarrow \Gamma$ such that:

\begin{itemize}
\item For any $p\in \Gamma$ there exists $p_0$ so that $p\in B^\Gamma_{r/2}(p_0)$ and \[\mathcal{P}(p,\Theta) = X^{-1}(\pi_{B^\Gamma_r(p_0)}(\Theta(X(p))));\] 
\item For any $\epsilon>0$, there exists $\delta>0$ such that $\|\mathcal{P}\|_{C^2}<\epsilon$ on $\mathcal{S}_\delta$.
\end{itemize}
\end{proposition}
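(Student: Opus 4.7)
The plan is to build $\mathcal{P}$ from local Euclidean nearest-point projections onto intrinsic geodesic disks of $\Gamma$, using translation invariance in the linear factor to globalize, and then to read off the $C^2$ smallness from the fact that $\mathcal{P}(\cdot,\id)=\mathrm{id}$ and from the quantitative proximity of $\Theta(X(p))$ to $X(p)$ on $\mathcal{S}_\delta$.

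First I would fix the scale. For each $p_0\in\Gamma$, the intrinsic disk $\Sigma_{p_0}:=B^\Gamma_r(p_0)$ is embedded once $r$ is smaller than the injectivity radius of $\Gamma$, and by compactness of $\mathring{\Gamma}$ together with translation invariance in the $y$-directions, one may choose $r\in(0,\tfrac{1}{2})$ independent of $p_0$ so that each Euclidean tubular neighborhood $\mathcal{T}_{p_0}:=\{q\in\mathbb{R}^N:\mathrm{dist}(q,X(\Sigma_{p_0}))<\delta_1\}$ admits a smooth nearest-point projection $\pi_{p_0}:\mathcal{T}_{p_0}\to X(\Sigma_{p_0})$ satisfying $\|\pi_{p_0}\|_{C^3}\leq K$ uniformly in $p_0$. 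I then fix once and for all a locally finite cover of $\Gamma$ by half-balls $\{B^\Gamma_{r/2}(p_\alpha)\}$ and for each $p\in\Gamma$ select a designated $p_0=p_\alpha$ with $p\in B^\Gamma_{r/2}(p_\alpha)$.

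Next, for $(p,\Theta)\in\mathcal{S}_{\delta_0}$ I set $\mathcal{P}(p,\Theta):=X^{-1}(\pi_{p_0}(\Theta(X(p))))$, where $X^{-1}$ inverts $X|_{\Sigma_{p_0}}$. The pointwise bound $|\Theta(X(p))-X(p)|\leq c_0\,d(\Theta,\id)\,\langle X(p)\rangle<c_0\delta_0$ noted in the discussion preceding the proposition, together with $p\in B^\Gamma_{r/2}(p_0)$, ensures for $\delta_0$ sufficiently small (depending on $r,\delta_1,c_0$) that $\Theta(X(p))\in\mathcal{T}_{p_0}$ and that its projection lies in the interior of $X(\Sigma_{p_0})$. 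Independence of the choice of $p_\alpha$ then follows because on any overlap $p\in B^\Gamma_{r/2}(p_\alpha)\cap B^\Gamma_{r/2}(p_\beta)$ both candidate projections lie within $c_0\delta_0$ of $X(p)$ and hence in the interior of the common open piece $X(\Sigma_{p_\alpha})\cap X(\Sigma_{p_\beta})$ of $\Gamma$; uniqueness of the nearest point in the Euclidean tubular neighborhood of $\Gamma$ forces them to agree.

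Finally, for the $C^2$ estimate I would identify $\mathcal{P}$ with $X\circ\mathcal{P}:\mathcal{S}_{\delta_0}\to\mathbb{R}^N$ and compare to the projection $(p,\Theta)\mapsto X(p)$. The chain rule plus the uniform bounds $\|\pi_{p_0}\|_{C^3}\leq K$ reduce $C^2$ control of this difference to $C^2$ control of $\Theta(X(p))-X(p)$ as a function on $\mathcal{S}_{\delta_0}$, and each such derivative is bounded by $c_m\,d(\Theta,\id)\,\langle X(p)\rangle\leq c_m\delta$, which is precisely the quantity controlled on $\mathcal{S}_\delta$. The main obstacle is tracking the noncompactness: one must verify that the injectivity radius $r$, tubular radius $\delta_1$, projection bound $K$, and constants $c_m$ are genuinely uniform over all of $\Gamma$, which reduces via translation invariance in the $\mathbb{R}^{n-k}$ factor to standard tubular neighborhood estimates on the compact cross-section $\mathring{\Gamma}$.
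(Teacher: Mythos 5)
Your proof is correct and follows essentially the same route as the paper: local Euclidean nearest-point projections onto intrinsic geodesic disks $B^\Gamma_r(p_0)$, patched via compactness of $\mathring{\Gamma}$ and translation invariance in the $\mathbb{R}^{n-k}$ factor, with the $C^2$ smallness of the basepoint change coming from the pointwise bound $|\Theta(X(p))-X(p)|\lesssim d(\Theta,\id)\langle X(p)\rangle$ on $\mathcal{S}_\delta$. The one thing you add beyond the paper's sketch is the explicit overlap-consistency argument (uniqueness of the nearest point in a tubular neighborhood forces agreement across charts), which the paper leaves implicit.
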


Note that we have composed with the immersion $X:\Gamma\to\mathbb{R}^N$ and its (local) inverse to ensure that $\mathcal{P}(\Theta,\cdot)$ is indeed a map $\Gamma\to\Gamma$.

\subsection{Graphs over rotated cylinders}

We now quantitatively rewrite a graph as a graph over a rotated cylinder. The key is that the constants do not depend on the size of the graphical domain. For $\Theta \in \mathrm{SO}(N)$ the pullback $\Theta^*$ is an isometry identifying the normal bundles $N(\Theta \cdot \Gamma)  \simeq N\Gamma$, with inverse $(\Theta^{-1})^* = \Theta_*$. It will again be convenient to perform the construction on the original cylinder $\Gamma$, noting that $\Theta\cdot \Gamma_W = (\Theta\cdot\Gamma)_{\Theta_* W}$. 

\begin{proposition}
\label{prop:graph-rot}
Let $\Gamma =\mathring{\Gamma}\times \mathbb{R}^{n-k}$. Given $\epsilon>0, C_l$, there exist $C,\delta,R_0>0$ such that the following holds: For any $R\geq R_0$, if $U$ is a normal vector field on $\Gamma \cap B_R$, with $\|U\|_{C^{2}} \leq \delta$ and $\|U\|_{L^2} \leq \frac{\delta}{R}$, then there is a rotation $\Theta \in \mathrm{SO}(N)$ and a normal vector field $W$, defined over $\Gamma \cap B_{R-2}$, such that
\begin{itemize}
\item for any $r\leq R$, the graphs satisfy \[ \Gamma_U \cap B_{r-3} \subset \Theta\cdot \Gamma_W \cap B_{r-2} \subset \Gamma_U \cap B_{r-1};\]
\item $\|W\|_{C^2}<\epsilon$; 
\end{itemize}
Furthermore, if $\|U\|_{C^{l+2}} \leq C_l$, $l>0$, then
\begin{itemize}
\item $\|W\|_{L^2}^2 \leq C(\|U\|_{L^2}^2 + \|U\|_{L^4}^{4a_l})$;
\item For any $J_0\in \mathcal{K}_0$ with $\|J_0\|_{L^2}=1$, we have \[\left|\int_{\Gamma\cap B_{R-3}} \langle W,J_0\rangle \rho\right| \leq C( \|U\|_{L^2}^{2a_l}+ \|U\|_{L^2} (R-3)^{n/2} \e^{-(R-3)^2/8}),\]
\end{itemize}
where $a_l=a_{l+1,2,n}= \frac{l}{l+2+n}$.
\end{proposition}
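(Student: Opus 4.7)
The plan is to choose the rotation $\Theta$ so that its linearisation matches $\pi_{\mathcal{K}_0}(U)$; then $W$ will automatically be orthogonal to $\mathcal{K}_0$ at leading order, with only a quadratic correction to estimate. Concretely, fix a smooth cutoff $\chi$ with $\chi \equiv 1$ on $B_{R-3}$ and vanishing outside $B_R$, set $J := \pi_{\mathcal{K}_0}(\chi U)$, $\theta := \iota^{-1}(J) \in \mathfrak{so}(N)$, and $\Theta := \exp(\theta)$. Since $\mathcal{K}_0$ is finite-dimensional and $\iota$ an isometry, $\|\theta\| \leq C\|J\|_{L^2} \leq C\|U\|_{L^2} \leq C\delta/R$, which is small. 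Proposition \ref{prop:rot-cyl} then yields $V_\Theta$ on $\Gamma$ representing $\Theta\cdot\Gamma$ with $|V_\Theta|_m \lesssim \|\theta\|\langle x\rangle$, in particular $\|V_\Theta\|_{C^3(B_R)}$ is small. Applying Proposition \ref{prop:global-graph} with $V = V_\Theta$ produces $W_0 = \mathcal{W}(\cdot,U,V_\Theta)$ with $\|W_0\|_{C^2} < \epsilon$ and the required graph containment. The desired field is the pullback $W := \Theta^* W_0$, so the first two bullet points follow immediately.

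For the quantitative bounds, the key observation is the pointwise Taylor expansion. From Proposition \ref{prop:global-graph}, $W_0(p) = U(p) - V_\Theta(p) + R_1(p)$ with $|R_1| \lesssim |U|_2^2 + |V_\Theta|_3^2$. From Proposition \ref{prop:rot-cyl}, $V_\Theta = J_\theta + R_2$ with $|R_2| \lesssim \|\theta\|^2\langle x\rangle^2$, and by construction $J_\theta = J$. Hence $W = (U - J) + R$ where $\int |R|^2 \rho \lesssim \|\theta\|^4 + \int |U|_2^4\rho$. The first summand $\|\theta\|^4 \leq C\|U\|_{L^2}^4$ is dominated by $\|U\|_{L^2}^2$, while the second is controlled by $\|U\|_{L^4}^{4a_l}$ via the weighted interpolation inequalities of Appendix \ref{sec:interpolation}, which trade extra derivatives of $U$ (bounded by $C_l$) against the $L^4$ norm and produce $a_l \nearrow 1$ as $l \to \infty$. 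Since $\|U-J\|_{L^2}^2 \leq \|U\|_{L^2}^2$, the claimed $L^2$ bound for $W$ follows. For the orthogonality, integrate $\langle W, J_0\rangle$ on $B_{R-3}$: since $\chi \equiv 1$ there,
\[\int_{\Gamma\cap B_{R-3}}\langle W,J_0\rangle\rho = \int_\Gamma\langle \chi U - J, J_0\rangle\rho - \int_{\Gamma\setminus B_{R-3}}\langle \chi U - J, J_0\rangle\rho + \int_{\Gamma\cap B_{R-3}}\langle R, J_0\rangle\rho.\]
The first integral vanishes by definition of $J$ as the $L^2$-projection of $\chi U$; the tail is bounded using $|J_0|\leq C\langle x\rangle$ (Corollary \ref{cor:rotation-est}) and Lemma \ref{lem:cutoff} to yield the $(R-3)^{n/2}e^{-(R-3)^2/8}\|U\|_{L^2}$ contribution; and the last integral is handled by the same interpolation, giving the $\|U\|_{L^2}^{2a_l}$ piece.

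The main obstacle is the polynomial weight $\langle x\rangle$ inherited from the rotation map on the noncompact cylinder, which forces quadratic residuals of the form $\int |U|_2^2\langle x\rangle^{2k}\rho$ that cannot be closed in $L^2$ alone. Exchanging this weight for additional regularity via the Appendix interpolation estimates introduces the fractional exponent $a_l$, which tends to $1$ precisely as the number of bounded derivatives $l$ grows. A secondary but delicate point is that $U$ is only defined on $B_R$, so the projection $J$ and the changes of basepoint (Proposition \ref{prop:basepoint}) must be handled carefully with the cutoff $\chi$, producing the Gaussian tail in the orthogonality estimate. Once these technicalities are arranged, the remaining estimates reduce to substitution, Cauchy--Schwarz, and the mentioned interpolation.
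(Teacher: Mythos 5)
Your proposal is correct and follows essentially the same strategy as the paper: choose $\theta$ so that $J_\theta = \pi_{\mathcal{K}_0}(U)$, expand the graph-change map to first order to identify $W \approx U - J_\theta$, bound the quadratic remainder pointwise by $\langle x\rangle^2\|\theta\|^2 + \|U\|_{C^2}^2$, and then trade the unweighted $C^2$ powers for $L^p$ norms via the Appendix interpolation (the source of $a_l$), with the Gaussian tail supplying the exponential error in the near-orthogonality estimate. The paper phrases the Taylor expansion via a single one-parameter family $W(s)(p)=\wt{\mathcal{W}}(p,\exp(s\theta),sU)$ and Taylor-expands in $s$, whereas you expand $\mathcal{W}$ and $\mathcal{V}$ in two separate steps and compose; these are equivalent. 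Two small notational slips worth fixing: with the paper's convention $\iota:\mathcal{K}_0\to\mathfrak{so}(N)$ you should write $\theta=\iota(J)$, not $\iota^{-1}(J)$; and the remainder $R_1(p)$ depends on $U$ over a whole small ball around $p$ (since $\mathcal{W}(p,U,V)$ is built from the implicit function theorem on $B^\Gamma_r(p_0)$), so it should be controlled by $\|U\|_{C^2(B_r(p))}$ rather than the pointwise $|U|_2(p)$ — this is in fact what makes the appeal to equation~(\ref{eq:interp-est2}) the right one.
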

\begin{proof}
The outline of the construction is as follows: Consider $p\in\Gamma$; the point $\Theta\cdot X(p)$ on the rotated cylinder $\Theta\cdot\Gamma$ lies over the basepoint $\mathcal{P}(\Theta,p)$ on the original cylinder $\Gamma$. Using Proposition \ref{prop:global-graph}, we may write $\Gamma_U$ as a normal graph over the rotated cylinder (parametrised as $\Gamma_{V_\Theta}$). Switching to the rotated parametrisation and rotating the vector field back will give the desired normal graph over the original cylinder.

Concretely, set \[\wt{\mathcal{W}}(p,\Theta, U) = \Theta^{-1} \cdot \mathcal{W}( \mathcal{P}(\Theta,p), U,V_\Theta),\] where $\mathcal{W}$, $V_\Theta = \mathcal{V}(\Theta,\cdot)$ and $\mathcal{P}$ are as constructed in Propositions \ref{prop:global-graph}, \ref{prop:rot-cyl} and \ref{prop:basepoint} respectively. If $\delta>0$ is sufficiently small, the map $\wt{\mathcal{W}}$ is well-defined for $p\in \Gamma \cap B_{R-2}$ and $U,\Theta$ such that $\|U\|_{C^2}<\delta$, $d(\Theta, \id) < \delta/R$. Moreover, we will have $(D\wt{\mathcal{W}})_{(p,\id, 0)} (0,\theta, U' ) = U'(p) - J_\theta(p)$ and the uniform estimates $\|\mathcal{W}\|_{C^2} \leq K$, $\|\mathcal{W}(\cdot,\Theta,U)\|_{C^2(B_{R-2})} \leq \epsilon$. 

Extend $U$ by $0$ to all of $\Gamma$, and set $\theta= \iota(\pi_{\mathcal{K}_0}(U))$, so that $J_\theta = \pi_{\mathcal{K}_0}(U)$. (Recall the notions in Section \ref{sec:rot0}; in particular recall that the metrics on $\mathfrak{so}(N)$ and $\mathrm{SO}(N)$ are chosen to that $\exp$ coincides with the Riemannian exponential map.) Define the one parameter family $W(s)(p) = \wt{\mathcal{W}}(p, \Theta(s), sU)$, where $\Theta(s):=\exp(s\theta)$; the desired normal vector field is then $W=W(1)$, and it satisfies the first two conclusions. Note that since $\exp$ is a radial isometry, we indeed have \[\label{eq:Theta-est}d(\Theta(s),\id) = |s| \|J_\theta\|_{L^2} \leq |s| \|U\|_{L^2} \leq |s| \delta /R.\]

It remains to prove the last two estimates. Recall that by the local construction of $\mathcal{W}$ (Proposition \ref{prop:global-graph}), for fixed $p$ the quantity $\wt{\mathcal{W}}(p,\Theta,U)$ depends only on the restriction $U|_{B_r(p)}$. In particular we calculate $W'(0) = U- J_\theta = \pi_{\mathcal{K}_0^\perp}(U)$ and $|W''(s)| \leq K(\langle x\rangle\|\theta\| + \|U\|_{C^2(B_r(p))})$, since $V_\Theta$ grows linearly. By choice of metrics, $\|\theta\| = \|\pi_{\mathcal{K}_0}(U)\|_{L^2}\leq \|U\|_{L^2}$. Thus by Taylor expansion about $s=0$, we have
\begin{equation}
\label{eq:Wexp}
|W-\pi_{\mathcal{K}_0^\perp}(U)| \leq K( \langle x\rangle^2 \|U\|_{L^2}^2 + \|U\|_{C^2(B_r(p))}^2). 
\end{equation}

For the last term we interpolate using Appendix \ref{sec:interpolation}. In particular, $a_l=a_{l,2,n}$, and by (\ref{eq:interp-est2}), \begin{equation}\label{eq:Wexp-err} \int_{\Gamma \cap B_{R-1}} \|U\|_{C^2(B_r(p))}^4 \rho(x(p)) \d p \leq C(r,l) \|U\|_{L^4}^{4a_l}.\end{equation}

By integrating (\ref{eq:Wexp}) and using (\ref{eq:Wexp-err}), it follows that $\|W\|_{L^2}^2 \leq C(\|U\|_{L^2}^2 + \|U\|_{L^4(B_R)}^{4a_l}).$ Now suppose $J_0\in\mathcal{K}_0$ is such that $\|J_0\|_{L^2}=1$. By (\ref{eq:Wexp}) and Corollary \ref{cor:rotation-est} we have \begin{equation}\label{eq:Wexp2}|\langle W,J_0\rangle - \langle \pi_{\mathcal{K}_0^\perp}(U),J_0\rangle | \leq C\langle x\rangle( \langle x\rangle^2 \|U\|_{L^2}^2 + \|U\|_{C^2(B_r(p))}^2).\end{equation} Note that the constant above does not depend on $J_0$. By (\ref{eq:interp-est2}) again, we have \[\int_{\Gamma \cap B_{R-1}} \langle x(p)\rangle\|U\|_{C^2(B_r(p))}^2 \rho(x(p)) \d p  \leq C\|U\|_{L^2}^{2a_l}.\] 
Then since $\int_\Gamma \langle \pi_{\mathcal{K}_0^\perp}(U),J_0\rangle \rho =0$, it follows from integrating (\ref{eq:Wexp2}) that  \[ \left|\int_{\Gamma \cap B_{R-3}} \langle W,J_0\rangle \rho\right| \leq \left| \int_{\Gamma \setminus B_{R-3}} \langle \pi_{\mathcal{K}_0^\perp}(U), J_0\rangle \rho\right| + C\|U\|_{L^2}^{2a_l} . \]
By Corollary \ref{cor:rotation-est} the first term on the right is bounded by $C\|U\|_{L^2} \left(\int_{\Gamma \setminus B_{R-3}} \langle x\rangle^2 \rho\right)^\frac{1}{2}$. By Lemma \ref{lem:cutoff} this is bounded by $C\|U\|_{L^2} (R-3)^{n/2} \e^{-(R-3)^2/8}$, which completes the proof. 
\end{proof}

\begin{remark}
One may prove various sharper estimates for (\ref{eq:Wexp-err}), by using weaker $C^0$ topology for $U$ in the construction of $\mathcal{W}$, or a more precise interpolation in Appendix \ref{sec:interpolation}. We have written this somewhat more naive estimate to streamline the exposition; using the $C^2$ topology guaranteed that $\mathcal{W}$ was $C^2$ in all variables, and we interpolate using higher derivatives for the main results anyway. 
\end{remark}

\section{{\L}ojasiewicz inequalities}
\label{sec:lojasiewicz}

In this section we prove the {\L}ojasiewicz inequalities using the estimates of Section \ref{sec:entire}, together with Section \ref{sec:rotation} to control the rotation part. We also prove the `improvement step' to be used later in applications. 

Since the estimates in Section \ref{sec:entire} apply for compactly supported graphs, we need to introduce a cutoff. This introduces cutoff errors which are exponential by Lemma \ref{lem:cutoff}. However, the cutoff error on an $L^q$ norm is larger for higher $q$ due to the exponent $\frac{1}{q}$. It will be crucial for the `improvement step' that we obtained the second order estimate Theorem \ref{thm:entire-est} with $\|\phi\|_{L^q}$, $q$ arbitrarily close to 1, instead of $\|\phi\|_{L^2}$. For convenience, set $\delta_R = R^n \e^{-R^2/4}$. 

We first prove a somewhat more general distance {\L}ojasiewicz inequality:

\begin{theorem}
\label{thm:lojasiewicz}
 Let $\mathring{\Gamma}$ be a closed shrinker satisfying (A1-A2) and $B_1(\mathring{\Gamma})\neq 0$. 
 
Given $q\in(1,\frac{4}{3}), \beta\in(\frac{1}{q},1)$, there exist $\epsilon_2>0, l_0$ so that the following holds: For any $\epsilon_1$, $\lambda_0$, $C_j$ there is an $R_0$ such that if $l\geq l_0$, $\Sigma^n\subset \mathbb{R}^N$ has $\lambda(\Sigma)\leq \lambda_0$ and
\begin{enumerate}
\item for some $R>R_0$, we have that $B_R\cap \Sigma$ is the graph of a normal field $U$ over some cylinder in $\mathcal{C}_n(\mathring{\Gamma})$ with $\|U\|_{C^{2}(B_R)} \leq \epsilon_2$ and $\|U\|_{L^2(B_R)} \leq \epsilon_2/R$, 
\item $|\nabla^j A| \leq C_j$ on $B_R\cap \Sigma$ for all $j \leq l$;
\end{enumerate}
then there is a cylinder $\Gamma \in\mathcal{C}_n(\mathring{\Gamma})$ and a compactly supported normal vector field $V$ over $\Gamma$ with $\|V\|_{C^{2,\alpha}} \leq \epsilon_1$, such that $\Sigma \cap B_{R-6}$ is contained in the graph of $V$, and 

\[ \|V\|_{L^2}^2 \leq C (\|U\|_{L^2}^{4a_l} +\|\phi\|_{L^q(B_R)} + \|\phi\|_{L^2(B_R)}^{2\beta}+ \delta_{R-5}^\beta  ),\]
where $C=C(n,\beta,q,l,C_l,\lambda_0, \epsilon_1)$, and $a_l \nearrow 1$ as $l\to \infty$. 
\end{theorem}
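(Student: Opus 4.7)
First, apply Proposition \ref{prop:graph-rot} to the graph $U$ over the given cylinder $\Gamma_0$: this produces a rotation $\Theta \in \mathrm{SO}(N)$ and a normal vector field $W$ on $\Gamma := \Theta \cdot \Gamma_0$, defined over $B_{R-2}$, with $\Sigma \cap B_{R-3} \subset \Gamma_W$, $\|W\|_{C^2}$ small, $\|W\|_{L^2}^2 \leq C(\|U\|_{L^2}^2 + \|U\|_{L^4}^{4a_l})$, and, for any unit $J_0 \in \mathcal{K}_0$, the rotation-projection estimate
\[ \left|\int_{\Gamma\cap B_{R-3}} \langle W, J_0\rangle \rho\right| \leq C(\|U\|_{L^2}^{2a_l} + \|U\|_{L^2}\,\delta_{R-3}^{1/2}). \]
Introduce a smooth cutoff $\chi$ supported in $B_{R-3}$ with $\chi \equiv 1$ on $B_{R-5}$ and uniformly bounded derivatives, and define $V := \chi W$, extended by zero to all of $\Gamma$. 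Then $V$ is compactly supported on $\Gamma$, $\|V\|_{C^{2,\alpha}}$ is small (via the $C^l$-regularity of $W$ from assumption (2)), and $\Sigma \cap B_{R-6} \subset \Gamma_V$ since $V = W$ on $B_{R-5}$.

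Apply Theorem \ref{thm:entire-est} to $V$ with parameter $\kappa \in (0,1]$ chosen so that $q < (3+\kappa)/3$, obtaining
\[ \|V\|_{L^2}^2 \leq C\left(\|\pi_{\mathcal{K}_0}(V)\|_{L^2}^2 + \|\phi_V\|_{L^q} + \|\phi_V\|_{W^{1,2}}^{2/q} + \|\phi_V\|_{L^2}^{6/(3+\kappa)}\right). \]
For the rotation projection, decompose in an $L^2$-orthonormal basis of $\mathcal{K}_0$ and apply the Proposition \ref{prop:graph-rot} estimate, yielding $\|\pi_{\mathcal{K}_0}(V)\|_{L^2}^2 \leq C(\|U\|_{L^2}^{4a_l} + \|U\|_{L^2}^2 \delta_{R-3})$; the second term is absorbed into $\delta_{R-5}^\beta$ for $R$ large (since $\beta > 1/q > 3/4$). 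For the $\phi_V$ norms: on $B_{R-5}$ where $V=W$, $\phi_V$ coincides (after pullback) with $\phi_\Sigma$, so each norm decomposes into a contribution from $\Sigma \cap B_R$ plus an annular cutoff error. Choose $\kappa$ small enough that $6/(3+\kappa) \geq 2\beta$, and interpolate $\|\phi\|_{W^{1,2}(B_R)} \leq C\|\phi\|_{L^2(B_R)}^\gamma$ using the $C^{l-1}$-bound on $\phi$ from (2), with $l$ large so $2\gamma/q \geq 2\beta$; both $\|\phi\|$-contributions then reduce to $\|\phi\|_{L^q(B_R)} + \|\phi\|_{L^2(B_R)}^{2\beta}$.

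The main obstacle is the annular cutoff errors, particularly for the $L^q$ term: a naive bound $|V|_2 \leq C$ yields a cutoff contribution of order $R\,\delta_{R-5}^{1/q}$, which exceeds $\delta_{R-5}^\beta$ (because $\beta > 1/q$ and $\delta_{R-5} < 1$). To overcome this, I will interpolate between the $L^2$-smallness $\|W\|_{L^2(B_{R-2})} \leq C\|U\|_{L^2}^{a_l}$ (which follows from Step 1 using $\|U\|_{L^4}^{4a_l} \leq \epsilon_2^{2a_l}\|U\|_{L^2}^{2a_l}$) and the uniform $C^l$-bounds from (2), via the inequalities in Appendix \ref{sec:interpolation}, to obtain improved pointwise control $\|V\|_{C^2(\text{annulus})} \leq C\|U\|_{L^2}^b$ for some $b > 0$. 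The annular cutoff error then takes the form $\|U\|_{L^2}^{b'}\,\delta_{R-5}^c$ with $c > 0$; since $\delta_{R-5}$ decays exponentially, this is dominated by $\|U\|_{L^2}^{4a_l} + \delta_{R-5}^\beta$ once $R \geq R_0(n,\beta,q,l,C_l,\lambda_0,\epsilon_1)$. Assembling these estimates yields the claimed inequality.
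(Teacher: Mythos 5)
Your proposal follows the same overall architecture as the paper's proof (rotate via Proposition~\ref{prop:graph-rot}, cut off, apply Theorem~\ref{thm:entire-est}, estimate the $\mathcal{K}_0$-projection and the cutoff errors), and your shortcut for the rotation $L^2$ bound using the pointwise interpolation $\|U\|_{L^4}^4 \leq \epsilon_2^2 \|U\|_{L^2}^2$ is a legitimate simplification that avoids invoking Proposition~\ref{prop:hW22} there. However, there is a genuine gap in the parameter bookkeeping, and the elaborate workaround you construct to compensate for it both fails and is unnecessary.

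The gap: you read the theorem's condition $\beta \in (\tfrac{1}{q},1)$ at face value and then try to choose $\kappa$ so that simultaneously $q < \tfrac{3+\kappa}{3}$ (needed for Theorem~\ref{thm:entire-est}) and $\tfrac{6}{3+\kappa} \geq 2\beta$. These two constraints force $\kappa > 3(q-1)$ and $\kappa \leq 3(1-\beta)/\beta$, which are compatible only if $q\beta < 1$, i.e.\ $\beta < \tfrac{1}{q}$ --- the opposite of the stated hypothesis. The same contradiction recurs in your interpolation of $\|\phi\|_{W^{1,2}}$: you want $\gamma \geq q\beta$, but any Gagliardo--Nirenberg exponent satisfies $\gamma < 1$, while $q\beta > 1$ if $\beta > \tfrac{1}{q}$. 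What is actually going on is that the inequality in the stated hypothesis is a typo: the paper's own proof explicitly takes $\kappa$ so that $\beta = \tfrac{3}{3+\kappa} < \tfrac{1}{q}$ and later invokes ``since $\tfrac{1}{q} > \beta$'' to absorb the cutoff error $\delta_{R-5}^{1/q}$ into $\delta_{R-5}^\beta$. Once one works with $\beta < \tfrac{1}{q}$ (and $\beta \geq \tfrac{3}{4}$ so that $\kappa \leq 1$), the $L^q$-cutoff error $\delta_{R-5}^{1/q}$ is immediately dominated by $\delta_{R-5}^\beta$, and the exponent constraints $b_l > q\beta$, $2a_l > 1$ are all achievable for $l$ large.

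Your attempted workaround --- gaining pointwise $C^2$-smallness of $V$ on the annulus via interpolation against $\|W\|_{L^2}$ --- does not close the gap. The weighted interpolation estimate (\ref{eq:interp-est}) produces a factor $\exp(a|x|^2/4p)$ in front of $\|u\|_{L^p_\rho}^a$, so at $|x| \approx R$ the exponential growth cancels the exponential smallness you would win from $\|W\|_{L^2} \lesssim \|U\|_{L^2}^{a_l}$; the resulting pointwise bound on the annulus is no better than the ambient $C^2$ bound $\epsilon_0$. Consequently the annular contribution to $\|\phi_V\|_{L^q}$ remains of order $\delta_{R-5}^{1/q}$ (up to polynomial factors already absorbed by Lemma~\ref{lem:cutoff}), and there is no term on the right-hand side of the desired inequality into which to hide it when $\beta > \tfrac{1}{q}$. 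So the theorem is not provable in the generality $\beta > \tfrac{1}{q}$, and the correct fix is to prove it for $\beta < \tfrac{1}{q}$ as the paper's proof actually does.
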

\begin{proof}
Fix a smooth cutoff function $\eta:\mathbb{R}\to [0,1]$ such that $\eta(t) =1$ for $t\leq 0$ and $\eta(t)=0$ for $t\geq 1$. Define $\eta_R:\mathbb{R}^N\to\mathbb{R}$ by $\eta_R(x) = \eta(|x|-R)$. 

Let $\epsilon_0>0$ be sufficiently small so that the results of Section \ref{sec:entire} apply. Take $\kappa \in (0,1)$ so that $\beta = \frac{3}{3+\kappa} <\frac{1}{q}$. Finally let $a_l = a_{l+2,2,n}$, $b_l = a_{l,1,n}$ be the exponents from interpolation (see Appendix \ref{sec:interpolation}). 

Let $\bar{\Gamma}$ be the original cylinder over which $U$ is defined. For $\epsilon_2$ small enough, $U$ will be defined at least over $\bar{\Gamma}\cap B_{R-1}$, and $U':=\eta_{R-2} U$ will satisfy $\|U'\|_{C^2} \leq \epsilon_0$ as well as
\begin{equation}\label{eq:containment0} \Sigma \cap B_{R-3} \subset \bar{\Gamma}_{U'} \cap B_{R-2} \subset \bar{\Gamma}_U \cap B_{R-1} \subset \Sigma\cap B_{R}.\end{equation}

Let $\delta>0$ be given by Proposition \ref{prop:graph-rot} with $\epsilon=\epsilon_0$. So long as $\epsilon_2<\delta$, we may apply Proposition \ref{prop:graph-rot} to $U$ on $B_{R-2}$ to obtain a cylinder $\Gamma \in \mathcal{C}_n(\mathring{\Gamma})$ and a normal vector field $W$ on $\Gamma \cap B_{R-4}$, and define $V= \eta_{R-5}W$. Then, possibly taking $\epsilon_2$ smaller still, we will have $\|W\|_{C^2} \leq \epsilon_0$, $\|V\|_{C^2} \leq \epsilon_0$ and \begin{equation}\label{eq:containment} \Sigma\cap B_{R-6} \subset \Gamma_V \cap B_{R-5} \subset  \Gamma_W \cap B_{R-4} \subset \Sigma\cap B_{R-3}.\end{equation}

We may now apply the estimates of Section \ref{sec:entire} to $V$ (and $U'$). We freely use that for small enough $\epsilon_2$ we certainly have $\|U\|_{L^2} , \|\phi\|_{L^q}, \|\phi\|_{L^2}<1$, so lower powers will be dominant. Note also that by (\ref{eq:containment0}), (\ref{eq:containment}) and the curvature bounds on $\Sigma$, we have uniform $C^{l+2}$ bounds on $U,W$ and hence $U', V$ respectively. 

Applying Theorem \ref{thm:entire-est} to $V=\eta_{R-5} W$, and interpolating using (\ref{eq:interp-est}), we have
  \begin{equation}
    \label{eq:lojasiewicz-entire}
    \begin{split}
 \|V\|_{L^2}^2 &\leq C(q) (\|\pi_{\mathcal{K}_0}(V)\|_{L^2}^2 + \|\phi_V\|_{L^q} + \|\phi_V\|_{W^{1,2}}^\frac{2}{q} + \|\phi_V\|_{L^2}^{2\beta})\\
& \leq C(q) (\|\pi_{\mathcal{K}_0}(V)\|_{L^2}^2 + \|\phi_V\|_{L^q} + \|\phi_V\|_{L^2}^\frac{2b_l}{q} + \|\phi_V\|_{L^2}^{2\beta}).
 \end{split}
\end{equation}

It remains to estimate the rotation part of $V$. Applying Proposition \ref{prop:hW22} to $U'$ implies 
 \begin{equation}\|U\|_{L^{4}(B_{R-2})}^4\leq \|U\|_{W^{2,4}(B_{R-2})}^4 \leq C(\|\phi_{U'}\|_{L^2}^2 + \|U\|_{L^2}^4).\end{equation} 

Then by the properties of $W$ from Proposition \ref{prop:graph-rot} (recall that we applied it on $B_{R-2}$), \begin{equation}\begin{split} \|V\|_{L^2}^2 &\leq \|W\|_{L^2}^2 \leq C( \|U\|_{L^2}^2 + \|U\|_{L^4(B_{R-2})}^{4a_l}) \\& \leq C'( \|U\|_{L^2}^{2} + \|U\|_{L^2}^{4a_l} + \|\phi_{U'}\|_{L^2}^{2a_l} ),\end{split}\end{equation} 
and if $J_0\in \mathcal{K}_0$ with $\|J_0\|_{L^2}=1$, then \begin{equation}\left|\int_{\Gamma\cap B_{R-5}} \langle V,J_0\rangle \rho\right| \leq C(\|U\|_{L^2(B_{R-2})}^{2a_l} +\delta_{R-5}^\frac{1}{2} \|U\|_{L^2} ).\end{equation} 

But by Corollary \ref{cor:rotation-est}, using Cauchy-Schwarz and Lemma \ref{lem:cutoff} we have \begin{equation}\left| \int_{\Gamma \setminus B_{R-5}} \langle V,J_0\rangle\rho \right| \leq C \|V\|_{L^2} \left(\int_{\Gamma \setminus B_{R-5}} \langle x\rangle^2 \rho \right)^\frac{1}{2} \leq C'\delta_{R-5}^\frac{1}{2} \|V\|_{L^2} .\end{equation}

Since $\mathcal{K}_0$ is finite-dimensional it follows that
\begin{equation}
\label{eq:projV}
\|\pi_{\mathcal{K}_0}(V)\|_{L^2} \leq C(\|\phi_{U'}\|_{L^2}^{2a_l} + \|U\|_{L^2}^{2a_l} + \delta_{R-5}).
\end{equation}

Last, we estimate the shrinker quantities using $\phi=\phi_\Sigma$. The quantities $\phi_V, \phi_{U'}$ are at most linear in $x$ by the $C^2$ bounds on $V,U'$, and they coincide with $\phi$ on $B_{R-5}$ and $B_{R-2}$ respectively. Using Lemma \ref{lem:cutoff} to estimate the cutoff error, we have 
\begin{equation}
\label{eq:U'-cutoff}
\begin{split}
\|\phi_{U'}\|_{L^2}^2 &= \int_{\Gamma\cap B_{R-2}} |\phi_{U'}|^2 \rho +  \int_{\Gamma\cap B_{R-1} \setminus B_{R-2}} |\phi_{U'}|^2 \rho
\\&\leq \int_{\Sigma\cap B_{R}} |\phi|^2 \rho + C \int_{\Gamma\cap B_{R-1} \setminus B_{R-2}} \langle x\rangle^2 \rho
\\&\leq  \|\phi\|_{L^2(B_{R})}^2 + C' \lambda(\Gamma) \delta_{R-2} 
.
\end{split}\end{equation}
and similarly, for any $s\in [1,2]$,
 \begin{equation}\|\phi_{V}\|_{L^s}^s \leq \|\phi\|_{L^s(B_R)}^s +C(s)\delta_{R-5},\end{equation}

Finally, substituting these estimates into (\ref{eq:projV}) and then (\ref{eq:lojasiewicz-entire}) gives 
\begin{equation}
\label{eq:main-estimate}
\begin{split}
\|V\|_{L^2}^2  \leq {}& C( \|\phi\|_{L^2(B_R)}^{4a_l} + \delta_{R-2}^{2a_l} + \|U\|_{L^2}^{4a_l} + \delta_{R-5}^2) \\&+ C(  \|\phi\|_{L^q(B_R)} + \delta_{R-5}^\frac{1}{q} + \|\phi\|_{L^2(B_R)}^\frac{2b_l}{q}+\delta_{R-5}^\frac{b_l}{q}+  \|\phi\|_{L^2(B_R)}^{2\beta} + \delta_{R-5}^\beta).
\end{split}
\end{equation}

This estimate holds for any $l\geq 2$; we now take $l$ large just to simplify the expression. In particular, for large enough $l$ we have $2a_l >1$ and $b_l > \beta q$. Since $\frac{1}{q}>\beta$, keeping dominant terms we conclude that

\begin{equation}
 \|V\|_{L^2}^2 \leq C\left(  \|U\|_{L^2}^{4a_l} +\|\phi\|_{L^q(B_R)} + \|\phi\|_{L^2(B_R)}^{2\beta}+ \delta_{R-5}^\beta \right).
\end{equation}

The $C^{2,\alpha}$ bound for $V$ follows by interpolation as long as $R$ is large enough. 

\end{proof}

We now deduce the main {\L}ojasiewicz inequalities and the `improvement step' by making successive simplifications. First, assuming a tighter bound on $U$ yields Theorem \ref{thm:lojasiewicz-intro}:

\begin{proof}[Proof of Theorem \ref{thm:lojasiewicz-intro}]
Apply Theorem \ref{thm:lojasiewicz} and take $l$ large enough so that $a_l > \beta$. The $L^2$ hypothesis then implies that $\|U\|_{L^2}^{4a_l}$ is dominated by the exponential error term $\delta_{R-5}^\beta$. 
\end{proof}

 By choosing $q$ close to 1 we get the gradient {\L}ojasiewicz inequality Theorem \ref{thm:lojasiewicz-grad}:

\begin{proof}[Proof of Theorem \ref{thm:lojasiewicz-grad}]
Let $V$ be the compactly supported normal vector field defined on $\Gamma \in \mathcal{C}_n(\mathring{\Gamma})$ that was obtained in Theorem \ref{thm:lojasiewicz-intro}. By Proposition \ref{prop:F-exp} and Young's inequality, 
\begin{equation}
\begin{split}
|F(\Gamma_V)-F(\Gamma)|  &\leq C( \|\phi_V\|_{L^2}^\frac{3}{2} + \|V\|_{L^2}^3)
\\&\leq C(  \|\phi_V\|_{L^2}^\frac{3}{2}  + \|\phi\|_{L^q}^{\frac{3}{2}} + \|\phi\|_{L^2}^{3\beta}+ \delta_{R-5}^\frac{3\beta}{2}). 
\end{split}
\end{equation}
By H\"{o}lder's inequality we have $\|\phi\|_{L^q} \leq C(q) \|\phi\|_{L^2}$, and as in the proof of Theorem \ref{thm:lojasiewicz} we have $\|\phi_V\|_{L^2}^2 \leq \|\phi\|_{L^2}^2 + \delta_{R-5}$. Since $\Sigma\cap B_{R-6}$ is contained in the graph $\Gamma_V$, by Lemma \ref{lem:cutoff} we have $|F(\Sigma)-F(\Gamma_V)| \leq C\delta_{R-6}$. Collecting dominant terms, as $\beta>\frac{3}{4}$ we conclude
\begin{equation}
|F(\Sigma)-F(\Gamma)| \leq C(\|\phi\|_{L^2}^\frac{3}{2} + \delta_{R-5}^\frac{3}{4}). 
\end{equation}

\end{proof}

Finally we assume that $\|\phi\|_{L^2}$ is small relative to the scale on which we have graphicality:

\begin{theorem}[Improvement step]
\label{thm:improvement}
 Let $\mathring{\Gamma}$ be a closed shrinker satisfying (A1-A2) and $B_1(\mathring{\Gamma})\neq 0$. 
%$
There exists $\epsilon_2>0, l$ such that given $\epsilon_1>0$, $\theta>0$, $\lambda_0$, $C$, $C_j$, there exists $R_0$ so that the following holds. Suppose that $\Sigma^n\subset \mathbb{R}^N$ has $\lambda(\Sigma)\leq \lambda_0$ and, for some $R_0\leq R\leq R_*$, we have that:
\begin{enumerate}
\item $B_R\cap \Sigma$ is a normal graph of $U$ over some cylinder in $\mathcal{C}_n(\mathring{\Gamma})$, with $\|U\|_{C^{2}(B_R)} \leq \epsilon_2$ and $\|U\|_{L^2(B_R)}^2 \leq \e^{-R^2/8}$,%$\|U\|_{L^2(B_{R/(1+\mu)})}^2\leq \e^{-R^2/4}$, 
\item $\|\phi\|_{L^2(B_R\cap\Sigma)}^2 \leq C \e^{-R_*^2/2}$,
\item $|\nabla^j A| \leq C_j$ on $B_R\cap \Sigma$ for all $j\leq l$;
\end{enumerate}
then there is a cylinder $\Gamma \in\mathcal{C}_n(\mathring{\Gamma})$ and a compactly supported normal vector field $V$ over $\Gamma$ with $\|V\|_{C^{2,\alpha}} \leq \epsilon_1$, such that $\Sigma \cap B_{R/(1+\theta)}$ is contained in the graph of $V$, and \[\|V\|_{L^2(B_{R/(1+\theta)})}^2 \leq \e^{-\frac{R^2}{4(1+\theta)^2}} .\] 
\end{theorem}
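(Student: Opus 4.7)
The plan is to apply Theorem \ref{thm:lojasiewicz} (the stronger form of the distance \L ojasiewicz inequality, which only requires the linear decay $\|U\|_{L^2(B_R)} \leq \epsilon_2/R$) directly on the full ball $B_R$, and then exploit $R \leq R_*$ together with hypothesis~(2) to dominate each term in its conclusion by $e^{-R^2/(4(1+\theta)^2)}$ for some $\theta < \mu$. First I would verify the $L^2$ hypothesis on $B_R$ by splitting
\[
\|U\|_{L^2(B_R)}^2 \leq \|U\|_{L^2(B_{R/(1+\mu)})}^2 + \|U\|_{C^0(B_R)}^2 \, F\bigl(\Gamma \cap (B_R \setminus B_{R/(1+\mu)})\bigr),
\]
where the first summand is $\leq e^{-R^2/4}$ by hypothesis and Lemma \ref{lem:cutoff} bounds the second by $C\epsilon_2^2 R^n e^{-R^2/(4(1+\mu)^2)}$. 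For $R_0$ large this sum is far smaller than $\epsilon_2^2/R^2$.

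With Theorem \ref{thm:lojasiewicz} in hand, the resulting $V$ satisfies
\[
\|V\|_{L^2}^2 \leq C\bigl(\|U\|_{L^2}^{4a_l} + \|\phi\|_{L^q(B_R)} + \|\phi\|_{L^2(B_R)}^{2\beta} + \delta_{R-5}^\beta\bigr),
\]
and the improvement reduces to bounding each of these four terms. I would choose $l_0$ so that $a_{l_0} > 1/2$, and then given $\mu$ pick $\theta \in (0,\mu)$ with $2a_{l_0}(1+\theta)^2 > (1+\mu)^2$ (feasible precisely because $a_{l_0} > 1/2$), followed by $\beta \in (1/(1+\theta)^2,\, 1)$ and $q \in (1, \min(4/3, 1/\beta))$. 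The bounds then go as follows. The previous step yields $\|U\|_{L^2(B_R)}^{4a_l} \lesssim R^{2na_l} e^{-a_l R^2/(2(1+\mu)^2)}$, whose leading exponent strictly exceeds $1/(4(1+\theta)^2)$ by the choice of $\theta$. Hypothesis (2) with $R \leq R_*$ gives $\|\phi\|_{L^2(B_R)} \leq \sqrt{C_2}\, e^{-R^2/4}$, so $\|\phi\|_{L^q} \lesssim \|\phi\|_{L^2} \lesssim e^{-R^2/4}$ and $\|\phi\|_{L^2}^{2\beta} \lesssim e^{-\beta R^2/2}$, both of which strictly beat the target exponent since $\theta > 0$ and $\beta > 1/(1+\theta)^2 > 1/(2(1+\theta)^2)$. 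Finally $\delta_{R-5}^\beta = (R-5)^{\beta n} e^{-\beta(R-5)^2/4}$ has leading exponent $\beta R^2/4$, which exceeds $R^2/(4(1+\theta)^2)$ because $\beta(1+\theta)^2 > 1$. Taking $R_0$ large enough to absorb all polynomial prefactors and the subleading $O(R)$ terms arising from the $(R-5)^2$ shifts, each contribution is at most $\tfrac14 e^{-R^2/(4(1+\theta)^2)}$, and the $L^2$ bound follows.

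For the domain statement, Theorem \ref{thm:lojasiewicz} already gives $\Sigma \cap B_{R-6} \subset \Gamma_V$, and $B_{R/(1+\theta)} \subset B_{R-6}$ as soon as $R \geq 6(1+\theta)/\theta$, so enlarging $R_0$ once more suffices; the $C^{2,\alpha}$ bound on $V$ is inherited directly from Theorem \ref{thm:lojasiewicz}. The principal obstacle throughout this balancing act is that the cutoff-error exponent $\beta$ must strictly exceed $1/(1+\theta)^2$, yet the constraint $\beta < 1$ forces $\theta > 0$. The essential point making the argument close is that Theorem \ref{thm:lojasiewicz} controls $\|V\|_{L^2}$ via the $L^q$ norm of $\phi$ with $q$ arbitrarily close to $1$; this permits $\beta$ arbitrarily close to $1$, and hence $\theta \in (0,\mu)$ arbitrarily small but positive — exactly the improvement needed for the iteration scheme.
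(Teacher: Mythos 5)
Your proof is substantively correct and closely parallels the paper's argument. The paper's proof is a two-step passage: it first verifies the hypothesis $\|U\|_{L^2(B_R)}^2 \leq \e^{-R^2/8}$ of Theorem \ref{thm:lojasiewicz-intro} (using the cutoff lemma, exactly as you do), and then applies that theorem, whose conclusion already absorbed the $\|U\|_{L^2}^{4a_l}$ term (that absorption happens in the short proof of Theorem \ref{thm:lojasiewicz-intro}, by taking $l$ large so $a_l > \beta$). You instead apply Theorem \ref{thm:lojasiewicz} directly and bound the extra $\|U\|_{L^2}^{4a_l}$ term explicitly against the target $\e^{-R^2/(4(1+\theta)^2)}$; this is the same idea, just unpacked into a single step. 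Your route is arguably slightly more robust: the paper's intermediate bound $\|U\|_{L^2(B_R)}^2 \leq \e^{-R^2/8}$ tacitly requires $(1+\mu)^2 < 2$, whereas you only need the much weaker $\|U\|_{L^2(B_R)} \leq \epsilon_2/R$ and then tune $\theta$ against $a_l$ directly. Both versions land on the same three-way balance of exponents that you correctly identify.

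One concrete slip: when choosing the parameters for Theorem \ref{thm:lojasiewicz} you write $q \in \bigl(1,\min(4/3,\,1/\beta)\bigr)$, which gives $q < 1/\beta$, i.e.\ $\beta < 1/q$. But Theorem \ref{thm:lojasiewicz} requires $\beta \in (1/q,1)$, i.e.\ $q > 1/\beta$. The constraint should read $q \in (1/\beta, 4/3)$, which is nonempty exactly when $\beta > 3/4$ (automatic once $\beta > 1/(1+\theta)^2$ and $\theta$ is not too large, and otherwise the explicit lower bound $\beta > 3/4$ must also be imposed). This is a bookkeeping error rather than a conceptual one — the substance of your exponent comparisons is right, and the choice of $(q,\beta,\theta)$ you actually use to close the argument is the same as the paper's — but as written the constraint on $q$ is inverted and would not satisfy the hypotheses of the theorem you are invoking.
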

\begin{proof}
%By the cutoff Lemma \ref{lem:cutoff}, we have $\|U\|_{L^2(B_R)}^2 \leq C_n \lambda_0 R^{n-2} \e^{-\frac{R^2}{4(1+\mu)^2}} \leq \e^{-R^2/8}$ for large $R$. 
Theorem \ref{thm:lojasiewicz-intro} gives the desired graph $V$, and it remains only to estimate its $L^2$ norm. Since $R\leq R_*$ we have $C(q)^{-1} \|\phi\|_{L^q} \leq \|\phi\|_{L^2} \leq \e^{-R_*^2/4} \leq \e^{-R^2/4}$. Then we have

\begin{equation}\|V\|_{L^2}^2 \leq C\left(  \e^{-\frac{R^2}{4}}+ \e^{-\beta \frac{R^2}{2}}+ (R-5)^{\beta n} \e^{-\beta\frac{(R-5)^2}{4}}\right).
\end{equation}

Clearly $q\in(1,\frac{4}{3}), \beta\in(\frac{1}{q},1)$
% and $\theta \in(0,\mu)$ 
may be chosen so that $\beta> \frac{1}{(1+\theta)^2}$. Then for $R_0$ sufficiently large we have $\|V\|_{L^2}^2 \leq \e^{-\frac{R^2}{4(1+\theta)^2}}$ as desired. 
\end{proof}

\section{Applications}
\label{sec:applications}

In this section we describe how the {\L}ojasiewicz inequality Theorem \ref{thm:lojasiewicz} and specifically the improvement step Theorem \ref{thm:improvement} imply the uniqueness and rigidity Theorems \ref{thm:unique} and \ref{thm:rigidity}. These follow the iterative extension/improvement method developed in \cite{CIM, CM15, CM19}; for completeness we state some key points more concretely and sketch the arguments. 

Recall that $\Sigma_s$ is a rescaled mean curvature flow (RMCF) if $(\pr_s X)^\perp = \phi$. Given a mean curvature flow $M_t$, there is an associated RMCF given by $\Sigma_s = e^{s/2} M_t$, where $s=-\ln(-t)$. Tangent flows to $M_t$ at $(0,0)$ correspond to the $s\to\infty$ limit of $\Sigma_s$. 

Given a submanifold $\Sigma$, the shrinker scale $R_\Sigma$ is defined by $\e^{-R_\Sigma^2/2} = \|\phi\|^2_{L^2(\Sigma)}$. Given a RMCF $\Sigma_s$, the shrinker scale $R_T$ is defined by \[\e^{-R_T^2/2} = \int_{T-1}^{T+1} \|\phi\|^2_{L^2(\Sigma_s)} \d s = F(\Sigma_{T-1}) - F(\Sigma_{T+1}).\] In both cases we understand the shrinker scale to be $\infty$ if the right hand side vanishes. 

\subsection{Extension step}

The following `extension step' is implied by the arguments of Colding-Minicozzi; which extends the domain of a graph by a multiplicative factor:

\begin{proposition}[\cite{CM15, CM19}]
\label{prop:extension}

Let $\mathring{\Gamma}$ be a closed shrinker, and take $K=\infty$ if $\mathring{\Gamma}$ is embedded or $K<\infty$ otherwise. Given $\epsilon_2>0$ and $\lambda_0$, there exist $\epsilon_3, R_2 ,C, C_l,\mu>0$ so that the following holds: Let $\Sigma_s$ be a RMCF with $\sup_s \sup_{\Sigma_s} |A_{\Sigma_s}|^2 \leq K$ and entropy at most $\lambda_0$, and suppose $R\in [R_2,R_T]$ is such that for any $s\in [T-\frac{1}{2},T+1]$, $B_R\cap \Sigma_s$ is the graph of $U$ over a fixed cylinder $\Gamma \in \mathcal{C}_n(\mathring{\Gamma}) $ with $\|U\|_{C^{2,\alpha}(B_R)}\leq \epsilon_3$. Then for $s\in [T-\frac{1}{2},T+1]$ we have:

\begin{enumerate}
\item $B_{(1+\mu)R}\cap \Sigma_s$ is contained in the graph of some extended $U$ with $\|U\|_{C^{2,\alpha}(B_{(1+\mu)R})}\leq \epsilon_2$;
\item $\|\phi\|^2_{L^2(B_{(1+\mu)R}\cap \Sigma_s)} \leq C \e^{-R_T^2/2}$;
\item $|\nabla^l A| \leq C_l$ on $B_{(1+\mu )R} \cap \Sigma_s$ for each $l$. 
\end{enumerate}
Moreover, the extension satisfies $\|U\|_{L^2(B_{(1+\mu)R})}^2 \leq \|U\|_{L^2(B_R)}^2 + C_n\epsilon_2 \lambda(\Gamma) R^{n-2} \e^{-R^2/4}$.
\end{proposition}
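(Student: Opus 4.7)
The plan is to adapt the extension step of Colding--Minicozzi essentially verbatim from \cite[Section 7]{CM19} (see also the corresponding argument in \cite{CM15}). Their proof uses only that $\Gamma$ is a shrinker, together with the Gaussian-weighted $L^2$ framework and Huisken's monotonicity; it does not exploit the rotational symmetry of round cylinders, so it applies without change to any $\Gamma\in\mathcal{C}_n(\mathring{\Gamma})$ for closed $\mathring{\Gamma}$. The three conclusions will be established simultaneously by an iterative bootstrap, with the curvature bounds in (3) and the Gaussian control from (2) feeding back into the graph extension (1).

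For the graph extension (1), the RMCF graph equation for $U$ over the fixed cylinder is a uniformly parabolic quasilinear system whose coefficients are controlled by the cylinder and $\epsilon_3$. To push graphicality outward from $B_R$ to $B_{(1+\mu)R}$ one cannot just apply the implicit function theorem. Instead, at any point in the outer annulus, I would invoke Brakke's regularity theorem (in the embedded case) or Huisken's pointwise Type I regularity under the global bound $|A|^2\leq K$ (in the immersed case). Both of these require density or $L^2$ closeness to $\Gamma$ in a Gaussian-weighted parabolic ball; this is exactly the input supplied by the hypothesis $R\leq R_T$, since the Gaussian weight in the outer annulus is $\lesssim\e^{-R^2/4}$, which dominates the deviation $\|\phi\|^2_{L^2}\lesssim\e^{-R_T^2/2}$ by monotonicity. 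Once graphicality holds on $B_{(1+\mu)R}$ with $\|U\|_{C^{2,\alpha}}\leq\epsilon_2$, classical parabolic Schauder estimates applied iteratively to the graph equation yield uniform $C^{l,\alpha}$ bounds on $U$, hence the bounds $|\nabla^l A|\leq C_l$ in (3).

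For (2), Huisken's monotonicity along RMCF gives $\frac{d}{ds}F(\Sigma_s)=-\|\phi\|^2_{L^2(\Sigma_s)}$, and by definition of $R_T$ one has $\int_{T-1}^{T+1}\|\phi\|^2_{L^2(\Sigma_s)}\,ds=\e^{-R_T^2/2}$. This yields the bound for at least one $s_0\in[T-1,T+1]$; to upgrade to a pointwise estimate on $[T-1/2,T+1]$ I would bound $\frac{d}{ds}\|\phi\|^2_{L^2(\Sigma_s)}$ using the evolution equation for $\phi$ along RMCF together with the derivative bounds from (3). The cutoff error from restricting integration to $B_{(1+\mu)R}$ is absorbed using Lemma \ref{lem:cutoff} since $R\geq R_2$.

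The principal obstacle is (1): extending graphicality by a multiplicative factor across a noncompact cylinder. The resolution, as in Colding--Minicozzi, is that the Gaussian weight combined with the shrinker-scale hypothesis $R\leq R_T$ forces $\Sigma_s$ to be already $L^2$-close to $\Gamma$ throughout the outer annulus, whereupon the parabolic pseudolocality supplied by Brakke or Huisken converts this into the required smooth closeness.
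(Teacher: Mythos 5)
Your approach is essentially the same as the paper's: both defer to the Colding--Minicozzi extension argument, observing that it relies only on the Gaussian $L^2$ framework and monotonicity rather than any special structure of the round cylinder and so applies to any generalised cylinder over a closed shrinker cross-section, with White's version of Brakke regularity supplying curvature bounds in the embedded case and the assumed global bound $|A|^2\leq K$ doing the work otherwise. The paper is more surgical in isolating exactly what needs re-verification, namely that [CM15, Sections 5.2--5.3] already treat general RMCF and the single remaining ingredient is the short-time stability statement [CM15, Lemma 5.39], which holds for any shrinker with uniformly bounded geometry; the formal iteration of [CM15, Section 5.5] then yields the proposition.
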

\begin{proof}
The results in \cite[Section 5.2-5.3]{CM15} are stated for general RMCF; the only remaining ingredient \cite[Lemma 5.39]{CM15} is a short-time stability that holds for any shrinker with uniformly bounded geometry. As pointed out in \cite[Section 7.2]{CM19}, these results continue to hold in high codimension, so the proposition follows from the formal argument in \cite[Section 5.5]{CM15}. Note that White's version \cite{W05} of Brakke's regularity theorem (see also \cite{ecker}) is used to obtain curvature bounds when $\Gamma$ is embedded; this step is not needed in the cases when one assumes a global curvature bound.

The last assertion follows from the cutoff Lemma \ref{lem:cutoff} as in the proof of Theorem \ref{thm:lojasiewicz}:
\[
\begin{split}
\|U\|_{L^2(B_{(1+\mu)R})}^2 &= \int_{\Gamma\cap B_{R}} |U|^2 \rho +  \int_{\Gamma\cap B_{(1+\mu)R} \setminus B_{R}} |U|^2 \rho
\\&\leq  \|U\|_{L^2(B_{R})}^2 + C_n\epsilon_2 \lambda(\Gamma) R^{n-2} \e^{-R^2/4}.
\end{split}
\]
\end{proof}

The reader may also consult \cite{zhu2023uniqueness} for details of a very similar extension lemma. In the special case of the static RMCF generated by a shrinker $\Sigma$, we have (cf. \cite{CIM}):

\begin{corollary}
\label{cor:extension-static}
Let $\mathring{\Gamma}$ be a closed shrinker, and take $K=\infty$ if $\mathring{\Gamma}$ is embedded or $K<\infty$ otherwise. Given $\epsilon_2>0$ and $\lambda_0$, there exist $\epsilon_3, R_2 ,C_l,\mu>0$ so that the following holds: Let $\Sigma$ be a shrinker with entropy at most $\lambda_0$ and $\sup_\Sigma |A_\Sigma|^2 \leq K$, and suppose $R\geq R_2$ is such that $B_R\cap \Sigma$ is the graph of $U$ over a fixed cylinder $\Gamma \in \mathcal{C}_n(\mathring{\Gamma}) $ with $\|U\|_{C^{2,\alpha}(B_R)}\leq \epsilon_3$. Then:

\begin{enumerate}
\item $B_{(1+\mu)R}\cap \Sigma$ is contained in the graph of some extended $U$ with $\|U\|_{C^{2,\alpha}(B_{(1+\mu)R})}\leq \epsilon_2$;
\item $|\nabla^l A| \leq C_l$ on $B_{(1+\mu)R}\cap \Sigma $ for each $l$. 
\end{enumerate}

\end{corollary}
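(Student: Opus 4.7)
The plan is to apply Proposition \ref{prop:extension} directly to the static rescaled mean curvature flow $\Sigma_s \equiv \Sigma$ induced by the shrinker $\Sigma$. Since $\phi \equiv 0$ on any shrinker, the definition of the shrinker scale gives $R_T = \infty$, so the constraint $R \in [R_2, R_T]$ reduces to the simpler constraint $R \geq R_2$.

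First, I would verify the hypotheses of Proposition \ref{prop:extension} for this static flow. The entropy bound $\lambda(\Sigma_s) \leq \lambda_0$ is immediate since $\lambda$ is invariant under the rescaling and $\Sigma_s = \Sigma$. The global curvature hypothesis $\sup_s \sup_{\Sigma_s} |A_{\Sigma_s}|^2 \leq K$ is exactly the standing assumption. Because $\Sigma_s$ is constant in $s$, the graphical hypothesis that $B_R \cap \Sigma_s$ is the graph of $U$ over a fixed cylinder $\Gamma \in \mathcal{C}_n(\mathring{\Gamma})$ with $\|U\|_{C^{2,\alpha}(B_R)} \leq \epsilon_3$ holds uniformly over the time interval $s \in [T-1/2, T+1]$ once it holds at any single time.

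The parameters $\epsilon_3, R_2, \mu, C_l$ are then inherited from Proposition \ref{prop:extension}. Its conclusion (1) gives exactly conclusion (1) of the corollary; its conclusion (3) (the curvature derivative bounds) gives conclusion (2) of the corollary. Its conclusion (2), namely $\|\phi\|_{L^2(B_{(1+\mu)R} \cap \Sigma_s)}^2 \leq C e^{-R_T^2/2}$, is automatically trivial since both sides vanish, so no constant $C$ need be carried over.

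There is essentially no obstacle to this step; the corollary is a clean specialization of the general dynamical statement, since a shrinker gives the degenerate case where the shrinker scale is infinite and the time slices are all identical. The only point worth double-checking is the convention for $R_T$ when $\phi \equiv 0$, which is set exactly so that this degenerate case makes sense.
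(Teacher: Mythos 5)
Your argument is correct and is exactly the intended specialization: the paper presents Corollary~\ref{cor:extension-static} immediately after Proposition~\ref{prop:extension} as the degenerate case of a static RMCF, relying on the same observations that $\phi\equiv 0$ forces $R_T=\infty$ and that the time slices are identical. Nothing further is needed.
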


\subsection{Uniqueness of blowups}

The crucial point for uniqueness of blowups is to establish a `discrete differential inequality' for the $F$ functional, with exponent $\frac{1+\mu}{2}>\frac{1}{2}$. (See \cite[Theorem 6.1]{CM15}, \cite[Theorem 7.1]{CM19}.) This again follows by the extension-improvement iteration. 

\begin{theorem}
\label{thm:final-ext}
Let $\mathring{\Gamma}$ be a closed shrinker satisfying (A1-A2) and $B_1(\mathring{\Gamma})\neq 0$, and take $K_0=\infty$ if $\mathring{\Gamma}$ is embedded or $K_0<\infty$ otherwise. Given $\epsilon_0>0$ and $\lambda_0$, there exists $R_1,\epsilon, K>0$ and $\bar{\mu}\in(0,\frac{1}{3})$ such that if $\Sigma_s$ is a RMCF with $\sup_s \sup_{\Sigma_s} |A_{\Sigma_s}|^2 \leq K_0$, entropy at most $\lambda_0$, and if, for $s\in[T-1,T+1]$, $B_{R_1}\cap \Sigma_s$ is a graph $U$ over a cylinder in $\mathcal{C}_n(\mathring{\Gamma})$ with $\|U\|_{C^{2,\alpha}}\leq \epsilon$, then:
\begin{enumerate}
\item There is a cylinder $\Gamma \in \mathcal{C}_n(\mathring{\Gamma})$ and a compactly supported normal field $V$ on $\Gamma$ so that $B_{(1+\bar{\mu})R_T} \cap \Sigma_T$ is contained in the graph of $V$, which satisfies $\|V\|_{C^{2,\alpha}} \leq \epsilon_0$, $\|V\|_{L^2}^2 \leq \e^{-(1-\bar{\mu})R_T^2/4}$ and $\|\phi_V\|_{L^2}^2 \leq \e^{-(1+3\bar{\mu})\frac{R_T^2}{4}} $;
\item For each $l$, there exists $C_l$ so that $\sup_{B_{(1+\bar{\mu})R_T\cap \Sigma_s}} |\nabla^l A| \leq C_l$ for $s\in [T-\frac{1}{2},T+1]$.
\item $|F(\Sigma_T) - F(\Gamma)| \leq K(F(\Sigma_{T-1})-F(\Sigma_{T+1}))^\frac{1+\bar{\mu}}{2}.$
\end{enumerate}
\end{theorem}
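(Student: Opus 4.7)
The plan is to iterate the extension step (Proposition \ref{prop:extension}) and the improvement step (Theorem \ref{thm:improvement}) to promote the initial small graph over a cylinder on $B_{R_1}$ to a graph on $B_{(1+\mu)R_T}$ with sharp Gaussian $L^2$-decay, and then to convert the gradient \L ojasiewicz inequality (Theorem \ref{thm:lojasiewicz-grad}) together with Huisken monotonicity of $F$ into the discrete differential inequality of conclusion (3).

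I would first iterate extension and improvement alternately. Starting from the initial data on $B_{R_1}$, one extension yields a graph on $B_{(1+\mu)R_1}$; whenever the current radius $R_k$ lies in $[R_0,R_T]$, the bound $\|\phi\|^2_{L^2}\le C\e^{-R_T^2/2}$ output by extension (with $R_*=R_T$) supplies the hypothesis on $\phi$ needed by improvement. Improvement then produces $V_k$ over a (possibly new) cylinder $\Gamma_k\in\mathcal{C}_n(\mathring{\Gamma})$ satisfying $\|V_k\|_{L^2}^2\le \e^{-R_k^2/(4(1+\theta)^2)}$ on the slightly smaller ball $B_{R_k/(1+\theta)}$, after which a further extension recovers and exceeds the original radius, yielding a net growth factor $(1+\mu)/(1+\theta)>1$ per cycle. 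After finitely many cycles the radius reaches $(1+\mu)R_T$. The compactly supported $L^2$ bound from the final improvement, together with the cutoff estimate Lemma \ref{lem:cutoff}, gives the assertion $\|V\|_{L^2}^2\le CR_T^{n-2}\e^{-R_T^2/4}$ in conclusion (1); the bound $\|\phi_V\|_{L^2}^2\le \e^{-(1+\mu)^2R_T^2/4}$ follows from the final extension output provided $\mu$ is taken small enough that $(1+\mu)^2\le 2$. The higher-derivative curvature bounds of conclusion (2) come out of each extension automatically.

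For conclusion (3), apply Theorem \ref{thm:lojasiewicz-grad} to $\Sigma_T$ at $R=(1+\mu)R_T$. Since $F(\Gamma)=F(\mathring{\Gamma})$ for any $\Gamma\in\mathcal{C}_n(\mathring{\Gamma})$, this yields
\[
|F(\Sigma_T)-F(\Gamma)| \le C\bigl(\|\phi\|_{L^2(\Sigma_T)}^{3/2} + R_T^n\,\e^{-(1+\mu)^2R_T^2/4}\bigr).
\]
Huisken monotonicity $\partial_s F(\Sigma_s) = -\|\phi\|_{L^2}^2$ gives $F(\Sigma_{T-1})-F(\Sigma_{T+1})=\int_{T-1}^{T+1}\|\phi\|_{L^2}^2\,ds=\e^{-R_T^2/2}$ by the definition of $R_T$, while the standard RMCF bound $\|\phi\|_{L^2}^2(\Sigma_T)\le C\int_{T-1}^{T+1}\|\phi\|_{L^2}^2(\Sigma_s)\,ds$ (cf.\ \cite[Lemma 4.1]{CM15}) controls the first term by $C(F(\Sigma_{T-1})-F(\Sigma_{T+1}))^{3/4}$. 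Since $\mu<1/2$ yields $3/4\ge (1+\mu)/2$, and $(1+\mu)^2>1+\mu$ makes the cutoff error subdominant, both contributions are bounded by $K(F(\Sigma_{T-1})-F(\Sigma_{T+1}))^{(1+\mu)/2}$, as required.

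The main obstacle is the bookkeeping required to propagate the $L^2$ hypothesis $\|U\|_{L^2(B_{R/(1+\mu)})}^2\le \e^{-R^2/4}$ across iterations in which the base cylinder $\Gamma_k$ rotates from step to step: successive rotations must be composed using the machinery of Section \ref{sec:rotation} so that the accumulated rotation stays controlled and does not degrade the $L^2$ estimate. The second-order (quadratic) nature of this rotation loss is what ultimately produces the $R_T^{n-2}$ polynomial prefactor in conclusion (1).
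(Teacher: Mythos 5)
Your overall plan matches the paper's: iterate the extension step (Proposition~\ref{prop:extension}) and the improvement step (Theorem~\ref{thm:improvement}) to reach scale roughly $R_T$, apply one final extension to reach $(1+\mu)R_T$, and then obtain the discrete differential inequality. The bookkeeping in the first paragraph (extend, improve, net growth factor $(1+\mu)/(1+\theta)>1$) is exactly right, and the use of Lemma~\ref{lem:cutoff} to get $\|V\|_{L^2}^2 \leq C R_T^{n-2}\e^{-R_T^2/4}$ is the correct mechanism.

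For conclusion~(3), however, you take a genuinely different route. You apply Theorem~\ref{thm:lojasiewicz-grad} directly to $\Sigma_T$ at scale $(1+\mu)R_T$, then convert $\|\phi\|_{L^2}^{3/2}$ to $(F(\Sigma_{T-1})-F(\Sigma_{T+1}))^{3/4}$ using Huisken monotonicity and the definition of $R_T$, whereas the paper bypasses Theorem~\ref{thm:lojasiewicz-grad} and instead uses the raw expansion estimate Proposition~\ref{prop:F-exp} applied to the compactly supported graph $V$, estimating $|F(\Gamma_V)-F(\Gamma)| \lesssim \|V\|_{L^2}\|\phi_V\|_{L^2} + \|V\|_{L^2}^3$ directly from the bounds already in hand and then closing with cutoff errors. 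Both routes are valid. Yours is arguably conceptually cleaner in that it re-uses the gradient \L ojasiewicz inequality as a black box; the paper's is tighter because it feeds the sharp $\|V\|_{L^2}$ and $\|\phi_V\|_{L^2}$ bounds into Proposition~\ref{prop:F-exp}, which gives a $\|V\|_{L^2}\|\phi_V\|_{L^2}$ cross term that is quantitatively better than $\|\phi\|_{L^2}^{3/2}$. To apply Theorem~\ref{thm:lojasiewicz-grad} you must verify its $L^2$ hypothesis $\|U\|_{L^2(B_R)}^2\leq\e^{-R^2/8}$ at $R=(1+\mu)R_T$; this needs $C R_T^{n-2}\e^{-R_T^2/4}\leq\e^{-(1+\mu)^2R_T^2/8}$, which forces $\mu<\sqrt{2}-1$ rather than just $\mu<1/2$. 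Since $\mu$ can always be taken smaller, this is not fatal, but you should state it. Also the $\|\phi\|_{L^2}$ bound needed can be read off directly from Proposition~\ref{prop:extension}(2), so the extra parabolic estimate you invoke is redundant.

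Two smaller points. First, the bound $\|\phi_V\|_{L^2}^2\leq\e^{-(1+\mu)^2R_T^2/4}$ is not quite just the ``final extension output'': $\phi_V$ is the shrinker quantity of the compactly supported graph $\Gamma_V$, so its $L^2$ norm includes the cutoff error on the annulus, and it is in fact this cutoff contribution $R_T^n\e^{-(1+\mu')^2R_T^2/4}$ which is the dominant one; one then takes $\mu$ slightly smaller than $\mu'$. Second, your closing remark that the $R_T^{n-2}$ prefactor ``ultimately'' comes from the quadratic nature of the rotation loss contradicts your own earlier, correct attribution: that polynomial factor is a pure Gaussian-tail effect from Lemma~\ref{lem:cutoff}. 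The rotation control is needed to keep the base cylinder from drifting across iterations, but it does not produce the polynomial factor.
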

\begin{proof}
The proof essentially proceeds as in \cite[Section 7]{CM19}. 

Fix $\epsilon_2$ as in Theorem \ref{thm:improvement}, then take $\epsilon_3$ as determined by Proposition \ref{prop:extension} with this $\epsilon_2$. We will then work with $R_1 \geq \max(R_0,R_2)$, where $R_0$ is determined by Theorem \ref{thm:improvement} with $\epsilon_1= \frac{1}{2}\epsilon_3$, and $R_2$ is as in Proposition \ref{prop:extension}. Also fix $\mu \in(0, \sqrt{2}-1)$ smaller than that given by Proposition \ref{prop:extension}; Theorem \ref{thm:improvement} allows us to fix $\theta\in(0,\mu)$ in a way to be determined later.

The iterative hypothesis at scale $R$ is: 
\begin{itemize}
\item[($\star$)] There is $\Gamma \in \mathcal{C}_n(\mathring{\Gamma})$ so that for each $s\in [T-\frac{1}{2},T+1]$, $B_R \cap\Sigma_s$ is a normal graph of $U$ over $\Gamma$, with $\|U\|_{L^2(B_R)}^2 \leq 2\e^{-R^2/4}$ and $\|U\|_{C^{2,\alpha}(B_R)}\leq \epsilon_3$.
\end{itemize}

Note that, for small enough $\epsilon<\epsilon_1$, ($\star$) will certainly hold at $R=R_1$. Given $R \in [R_1, R_T]$ for which ($\star$) holds, applying the extension step Proposition \ref{prop:extension} gives extended $U$ on $B_{(1+\mu)R}$ satisfying $\|U\|_{C^2(B_{(1+\mu)R})} \leq \epsilon_2$ and, if $R_1$ is large enough, (note $(1+\mu)^2<2$)
\begin{equation}
\label{eq:extended-U}
\|U\|_{L^2(B_{(1+\mu)R})}^2 \leq 2\e^{-R^2/4} + C_n\lambda(\Gamma)R^{n-2} \e^{-R^2/4}\leq \e^{-(1+\mu)^2 R^2/8}.
\end{equation}

If $(1+\mu)R \leq R_T$, then we may apply the improvement step to show that ($\star$) holds at scale $\frac{1+\mu}{1+\theta}R$. Indeed, applying Theorem \ref{thm:improvement} (at scale $R\mapsto (1+\mu)R$ and with $R_*= R_T$) at any $s$ gives that $B_{\frac{1+\mu}{1+\theta}R}\cap \Sigma_s$ is contained in a normal graph of $V$, with the estimates $\|V\|_{L^2(B_{\frac{1+\mu}{1+\theta}R})}^2 \leq \e^{-\left(\frac{1+\mu}{1+\theta}\right)^2\frac{R^2}{4^2}}$ and $\|V\|_{C^{2,\alpha}} \leq \epsilon_1$. To ensure that we use the same cylinder for all $s$, we first take this normal field $V$ only at $s=T-\frac{1}{2}$ and fix the corresponding cylinder $\Gamma$. Now fix $l>3$ so that $a_l = a_{l,n,3} > \frac{1}{(1+\theta)^2}$; by interpolation (using conclusions (2-3) of Proposition \ref{prop:extension}) we have $\sup_{s\in[T-\frac{1}{2}, T+1]} \sup_{B_{(1+\mu)R-1}\cap \Sigma_s} |\phi|_3 \leq C\e^{-a_l R_T^2/4}$. Since $\phi$ is the velocity of the RMCF, it follows that for any $s\in [T-\frac{1}{2},T+1]$, $B_{\frac{1+\mu}{1+\theta}R}\cap \Sigma_s$ is contained in a graph (still denoted $V$) over the fixed cylinder $\Gamma$ with the desired estimates \[\|V\|_{L^2(B_{\frac{1+\mu}{1+\theta}R})}^2 \leq \e^{-\left(\frac{1+\mu}{1+\theta}\right)^2\frac{R^2}{4^2}} +C\e^{-a_l R_T^2/4} \leq 2\e^{-\left(\frac{1+\mu}{1+\theta}\right)^2\frac{R^2}{4^2}},\] \[\|V\|_{C^{2,\alpha}} \leq \epsilon_1 + C\e^{-a_l R_T^2/4} \leq \epsilon_3, \] where we have used that $(1+\mu)R\leq R_T$ are large. 

By iterating these steps, we can reach a scale $R$ for which $(\star)$ holds, but $(1+\mu)R > R_T$. In this case, we can still use the extension step, and (\ref{eq:extended-U}) implies that $\|U\|_{L^2(B_{R_T})}^2 \leq \e^{-R_T^2/8}$. Therefore, we may apply the improvement step at scale $R_T$ (with $R_*=R_T$), which gives that $(\star)$ holds at scale $\frac{1}{1+\theta}R_T$. We then apply the extension step one final time, extending $V$ to scale $\frac{1+\mu}{1+\theta}R_T$, so that $B_{\frac{1+\mu}{1+\theta}R_T}\cap \Sigma_s$ is contained in a graph of $V$, where $\|V\|_{C^{2,\alpha}} \leq \epsilon_2$ and %\textcolor{lue}{(using Lemma \ref{lem:cutoff}, as in the proof of Proposition \ref{prop:extension}, to cut off at scale $\frac{1}{1+\theta}R_T$)} 
\[\|V\|_{L^2(B_{\frac{1+\mu}{1+\theta}R_T})}^2 \leq 2\e^{-\frac{1}{(1+\theta)^2}R_T^2/4}+C_n\epsilon_2 \lambda(\Gamma)R_T^{n-2} \e^{-\frac{1}{(1+\theta)^2}R_T^2/4}.% \leq \textcolor{lue}{2\e^{-\frac{1}{(1+\theta)^2}R_T^2/4}} .
\]

By item (2) of Proposition \ref{prop:extension}, we have $\|\phi\|_{B_{\frac{1+\mu}{1+\theta}R_T}\cap\Sigma_s} \leq C\e^{-R_T^2/2}$, so by Lemma \ref{lem:cutoff} again,  \[\|\phi_V\|_{L^2}^2 \leq C\e^{-R_T^2/2} + CR_T^n \e^{-\left(\frac{1+\mu}{1+\theta}\right)^2R_T^2/4} %\leq \textcolor{lue}{2CR_T^n \e^{-\left(\frac{1+\mu}{1+\theta}\right)^2R_T^2/4}} %\e^{-(1+\bar{\mu})^2\frac{R_T^2}{4}} .
\] 

Choose $\theta$ small enough so that, for some $\bar{\mu}\in(0,\frac{1}{3})$, \[ 1-\bar{\mu} < \frac{1}{(1+\theta)^2}, \qquad 1+\bar{\mu} < \frac{1+\mu}{1+\theta}, \qquad 1+3\bar{\mu} < \frac{(1+\mu)^2}{(1+\theta)^2} .\] Then $\|V\|_{L^2}^2\leq \e^{-(1-\bar{\mu})R_T^2/4}$ and $\|\phi_V\|_{L^2}^2\leq \e^{-(1+3\bar{\mu}) R_T^2/4}$. Consequently,
for the $F$ functional, by Proposition \ref{prop:F-exp}, it follows that (note $3(1-\bar{\mu}) > 1+\bar{\mu}$) \[|F(\Gamma_V) - F(\Gamma)| \leq C(\|V\|_{L^2} \|\phi_V\|_{L^2} + \|V\|_{L^2}^3) \leq Ce^{-(1+\bar{\mu})\frac{R_T^2}{4}}.\]

Using Lemma \ref{lem:cutoff} once more we conclude that 
\[
\begin{split}
|F(\Sigma_T)-F(\Gamma)| &\leq  |F(\Gamma_V)-  F(\Gamma)|+ CR_T^{n-2}\e^{-\left(\frac{1+\mu}{1+\theta}\right)^2 R_T^2/4} 
\\& \leq K \e^{-(1+\bar{\mu})R_T^2/4} = K ( F(\Sigma_{T-1}) - F(\Sigma_{T+1}))^{\frac{1+\bar{\mu}}{2}}.
\end{split}
\]
\end{proof}

The proof of uniqueness proceeds as in Colding-Minicozzi \cite[Theorem 0.2]{CM15} (see also \cite[Theorem 0.1]{CM19}), using item (3) above for the discrete differential inequality. Again for completeness, we sketch the main points as follows:

\begin{proof}[Proof of Theorem \ref{thm:unique}]
Without loss of generality we may assume $(x_0,t_0)=(0,0)$ and consider the RMCF $\Sigma_s = e^{s/2} M_t$, $s=-\ln(-t)$ as above. Arguing exactly as for \cite[Corollary 0.3]{CIM}, the rigidity Theorem \ref{thm:rigidity} implies that any other tangent flow at $(x_0,t_0)$ must be induced by a cylinder in $\mathcal{C}_n(\mathring{\Gamma})$. It follows that:
\begin{itemize}
\item[($\dagger$)] For all $T\geq s_0$, there is a cylinder $\Gamma' \in \mathcal{C}_n(\mathring{\Gamma})$ so that for all $s\in [T-1,T+1]$, $\Sigma_s \cap B_{R_1}$ is a normal graph over $\Gamma'$, with $C^{2,\alpha}$ norm at most $\epsilon_1$. 
\end{itemize}

In particular, Theorem \ref{thm:final-ext} applies for all $T\geq s_0$. By item (3) of that theorem, and \cite[Lemma 6.9]{CM15}, we have that $\sum_{j=1}^\infty (F(\Sigma_j)-F(\Sigma_{j+1}))^\frac{1}{2} <\infty$. This implies the uniqueness, noting that since $\phi$ is the velocity of RMCF, the $L^1$ distance between time slices is at most \[\int_{s_1}^{s_2}\|\phi\|_{L^1(\Sigma_s)}\d s \leq \sqrt{F(\Sigma_0)} (F(\Sigma_{s_1}) - F(\Sigma_{s_2}))^\frac{1}{2}.\] 

Note that for the initial closeness ($\dagger$) and the uniqueness of tangent flow type, one passes to smooth convergence, using White's version of Brakke regularity if $\mathring{\Gamma}$ is embedded, or using the type I assumption (which implies a uniform curvature bound on $\Sigma_s$) otherwise. 
\end{proof}

\subsection{Rigidity}

The rigidity follows by combining Theorem \ref{thm:improvement} with Corollary \ref{cor:extension-static}:

\begin{proof}[Proof of Theorem \ref{thm:rigidity}]
For large enough $R_1$ we will have $\|U\|_{L^2(B_{R_1})}^2 \leq \e^{-R_1^2/4}$. Similar to the proof of Theorem \ref{thm:final-ext}, we may then iterate the extension step Corollary \ref{cor:extension-static} with the improvement step Theorem \ref{thm:improvement} (which may be applied with $R_*$ arbitrarily large, as $\phi_\Sigma\equiv0$). The iteration shows that for arbitrarily large $R$, $\Sigma\cap B_R$ is a graph $V$ over some $\Gamma\in \mathcal{C}_n(\mathring{\Gamma}) $ with $\|V\|_{L^2}^2 \leq \e^{-R^2/4}\to 0$, hence $\Sigma$ must be in $\mathcal{C}_n(\mathring{\Gamma})$. 
\end{proof}

\appendix

\section{Interpolation}
\label{sec:interpolation}

We record the consequences of interpolation for Gaussian weight. In this appendix, $L^p$ refers to the unweighted space, with $L^p_\rho$ referring to the weighted space. 

As in \cite[Appendix B]{CM15}, the statements in this appendix hold equally well for tensor quantities on a manifold with uniformly bounded geometry, of course with different constants. 

We recall the following interpolation result from \cite{CM15}, which is a special case of the Gagliardo-Nirenberg interpolation inequality (cf. \cite[Theorem 1.5.2]{ChM} or \cite[Section 5.6]{evans}): 

\begin{lemma}
There exists $C=C(m,j,n)$ so that if $u$ is a $C^m$ function on $B^n_{r}$, then for $j\leq m$, setting $a_{m,j,n} = \frac{m-j}{m+n}$ we have
\[r^j \|\nabla^j u\|_{L^\infty(B_r)} \leq C\left( r^{-n} \|u\|_{L^1(B_{r})} + r^j \|u\|_{L^1(B_{r})}^{a_{m,j,n}} \|\nabla^m u \|^{1-a_{m,j,n}}_{L^\infty(B_{r})}\right).\]
\end{lemma}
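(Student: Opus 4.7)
The plan is to reduce to $r=1$ by a straightforward scaling, and then to establish the unit-ball case by a localized Taylor expansion combined with an equivalence-of-norms argument on polynomials of bounded degree.

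First, set $v(x) = u(rx)$ on $B_1$. The chain rule gives $\|\nabla^j v\|_{L^\infty(B_1)} = r^j\|\nabla^j u\|_{L^\infty(B_r)}$, $\|v\|_{L^1(B_1)} = r^{-n}\|u\|_{L^1(B_r)}$, and $\|\nabla^m v\|_{L^\infty(B_1)} = r^m\|\nabla^m u\|_{L^\infty(B_r)}$. Substituting these into the claimed inequality at scale $r$ and demanding the $r$-factors cancel forces $j + na - m(1-a) = 0$, i.e.\ $a = (m-j)/(m+n)$; thus the exponent is dictated by scaling, and it suffices to prove the $r=1$ version
\[ \|\nabla^j v\|_{L^\infty(B_1)} \leq C \bigl( \|v\|_{L^1(B_1)} + \|v\|_{L^1(B_1)}^a \|\nabla^m v\|_{L^\infty(B_1)}^{1-a} \bigr). \]

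Next, fix $x_0 \in B_1$ and a parameter $\rho \in (0,1]$ to be optimized. Taylor's theorem on the ball $B_\rho(x_0)$ (or on $B_\rho(x_0) \cap B_1$ near the boundary) gives $v(x_0+y) = P(y) + R(y)$, where $P$ is the degree-$(m-1)$ Taylor polynomial and $|R(y)| \leq C|y|^m \|\nabla^m v\|_{L^\infty}$. Since $\nabla^j v(x_0) = \nabla^j P(0)$ and the space of polynomials of degree $< m$ in $n$ variables is finite-dimensional, all seminorms on it are equivalent on $B_1$; rescaling to $B_\rho$ then yields $|\nabla^j P(0)| \leq C\rho^{-n-j} \|P\|_{L^1(B_\rho)}$. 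Combining with $\|P\|_{L^1(B_\rho)} \leq \|v\|_{L^1(B_1)} + C\rho^{m+n}\|\nabla^m v\|_{L^\infty}$ produces the pointwise bound
\[ |\nabla^j v(x_0)| \leq C \rho^{-n-j} \|v\|_{L^1(B_1)} + C \rho^{m-j}\|\nabla^m v\|_{L^\infty(B_1)}. \]
Choosing $\rho$ so that $\rho^{m+n} = \|v\|_{L^1}/\|\nabla^m v\|_{L^\infty}$ balances the two terms, and each becomes $\|v\|_{L^1}^a \|\nabla^m v\|_{L^\infty}^{1-a}$ with $a = (m-j)/(m+n)$. If this optimal $\rho$ exceeds $1$, we instead take $\rho = 1$, and only the $\|v\|_{L^1}$ contribution remains, which is precisely why the inequality is stated with two terms on the right.

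The main technical point is handling $x_0$ close to $\partial B_1$, where $B_\rho(x_0) \not\subset B_1$. This is resolved by replacing $B_\rho(x_0)$ throughout by $B_\rho(x_0) \cap B_1$: the intersection still has volume comparable to $\rho^n$ uniformly in $x_0$, and the polynomial seminorm equivalence holds on any such Lipschitz domain with constants depending only on $n$ and $m$. (Alternatively one may prove the estimate first on $B_{1/2}$ and then patch via a finite cover, which only changes the constant $C$.) Since $a$, the exponent, is already fixed by scaling and the rest of the argument is routine, this boundary bookkeeping is the only mildly delicate step; the overall proof requires no PDE input beyond Taylor's theorem.
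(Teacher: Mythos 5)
Your proof is correct, and it is worth noting that the paper does not actually prove this lemma: it is recorded there only as a citation, being recalled from \cite{CM15} and described as a special case of Gagliardo--Nirenberg interpolation (with pointers to standard texts). Your argument gives a self-contained elementary derivation, which is therefore a genuinely different route.

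The scaling step is clean and correctly identifies $a_{m,j,n}=(m-j)/(m+n)$ as the unique exponent compatible with dilation invariance, which both explains the structure of the inequality and reduces to $r=1$. The Taylor-plus-polynomial-equivalence core is also right: since $P$ has degree $\le m-1$ and lives in a fixed finite-dimensional space, one gets $|\nabla^j P(0)|\le C\rho^{-n-j}\|P\|_{L^1(D)}$ for any domain $D$ containing $0$ that is, after rescaling by $\rho$, comparable to a fixed ball; writing $P=v-R$ with $|R(y)|\le C|y|^m\|\nabla^m v\|_{L^\infty}$ and optimizing $\rho$ (capping $\rho\le 1$) yields exactly the two-term right-hand side. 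The only point you should be slightly more explicit about is the boundary case: rather than invoking norm equivalence on the variable domain $B_\rho(x_0)\cap B_1$, it is cleaner to observe that for $x_0\in B_1$ and $\rho\le 1$ the shifted ball $B_{\rho/2}\bigl(x_0-\tfrac{\rho}{2}\hat{x}_0\bigr)$ lies inside $B_\rho(x_0)\cap B_1$ and contains $x_0$ in its closure, so after rescaling one always works on a ball of radius $1/2$ containing the origin on its boundary, and the equivalence constant is manifestly uniform. Your ``Lipschitz domain'' phrasing and the ``patch via a finite cover'' alternative both require an extra uniformity argument that the shifted-ball device makes unnecessary. Also note the degenerate endpoint $j=m$: there $a=0$, the Taylor polynomial argument is vacuous, and the inequality reduces trivially to the second term on the right; it is worth flagging that the claim holds there directly.

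Compared with simply citing Gagliardo--Nirenberg, your approach buys transparency about where the constant $C(m,j,n)$ comes from and why the exponent must be $(m-j)/(m+n)$, and it makes no reference to Sobolev embedding machinery. The cost is that you reprove a classical fact; in the context of the paper, the citation is the economical choice, but your derivation is a legitimate and arguably more instructive substitute.
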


For our purposes we consider quantities on a generalised cylinder $\Gamma = \mathring{\Gamma}^k \times \mathbb{R}^{n-k}$, where $\mathring{\Gamma}$ is compact, and recall the Gaussian weight $\rho = (4\pi)^{-n/2} \e^{-|x|^2/4}$. We apply interpolation at scale $r= \frac{1}{1+|x|}$. This gives \[\| \nabla^j u\|_{L^\infty(B_r(x))} \leq C\left( \langle x\rangle^{n+j} \|u\|_{L^1(B_{r}(x))} + \|u\|_{L^1(B_{r}(x))}^{a_{m,j,n}} \|\nabla^m u \|^{1-a_{m,j,n}}_{L^\infty(B_r(x))}\right).\]

Working at scale $r=\frac{1}{1+|x|}$ means that the variation of $\log\rho$ is uniformly bounded on $B_r(x)$. So for $x\in B_{R-1}$, $p\geq 1$ we can estimate $\|u\|_{L^1(B_{r}(x))} \leq C(p) \|u\|_{L^p(B_{r}(x))}\leq C' \e^{\frac{1}{4p}|x|^2} \|u\|_{L^p_\rho(B_R)}$. Then defining $M_j = \|\nabla^j u\|_{L^\infty(B_R)}$, we have
\begin{equation}
\label{eq:interp-est}
|\nabla^j u|(x)\leq C(m,n,j,p)\left(\langle x\rangle^{n+j} M_0^{1-a_{m,j,n}} + M_m^{1-a_{m,j,n}}  \right) \e^{\frac{a_{m,j,n} |x|^2}{4p}} \|u\|_{L^p_\rho(B_R)}^{a_{m,j,n}}.
\end{equation}

Since $\langle x\rangle^b \e^{\frac{a}{4}|x|^2}$ is $\rho$-integrable for $a<1$, it follows that for sufficiently large $R$, \begin{equation}\|u\|_{W^{j,p}_\rho(B_{R-1}) }\leq C(m,j,n,p,M_0,M_m) \|u\|_{L^p_\rho(B_R)}^{a_{m,j,n}}.\end{equation} 

In fact, consider a fixed scale $r_0<1$. Then by (\ref{eq:interp-est}), 

\[\|\nabla^j u\|_{L^\infty(B_{r_0}(x))} \leq C \left(\langle x\rangle^{n+j} M_0^{1-a_{m,j,n}} + M_m^{1-a_{m,j,n}}  \right) \e^{\frac{a_{m,j,n} }{4p}(|x|+r_0)^2} \|u\|_{L^p_\rho(B_R)}^{a_{m,j,n}}. \]

So since $\langle x\rangle^b \e^{\frac{a}{4}(|x|+r_0)^2}$ is also $\rho$-integrable for $a<1$, it follows that
\begin{equation}
\label{eq:interp-est2}
 \left(\int_{B_{R-1}} \langle x\rangle^q \|u\|^p_{C^j(B_{r_0}(x))} \rho(x) \d x\right)^\frac{1}{p} \leq C(m,j,n,p,M_0,M_m,q, r_0) \|u\|_{L^p_\rho(B_R)}^{a_{m,j,n}}.\end{equation}

\bigskip

\bibliographystyle{plain}
\bibliography{taylor}

\begin{thebibliography}{10}

\bibitem{AL}
U.~Abresch and J.~Langer.
\newblock The normalized curve shortening flow and homothetic solutions.
\newblock {\em J. Differential Geom.}, 23(2):175--196, 1986.

\bibitem{adams}
David Adams.
\newblock {\em Rates of asymptotic convergence for solutions of geometric
  variational problems}.
\newblock PhD thesis, Australian National University, 1986.

\bibitem{BE85}
D.~Bakry and Michel \'{E}mery.
\newblock Diffusions hypercontractives.
\newblock In {\em S\'{e}minaire de probabilit\'{e}s, {XIX}, 1983/84}, volume
  1123 of {\em Lecture Notes in Math.}, pages 177--206. Springer, Berlin, 1985.

\bibitem{BS18}
Julius Baldauf and Ao~Sun.
\newblock Sharp entropy bounds for plane curves and dynamics of the curve
  shortening flow.
\newblock {\em arXiv preprint arXiv:1808.03936}, 2018.

\bibitem{CL13}
Huai-Dong Cao and Haizhong Li.
\newblock A gap theorem for self-shrinkers of the mean curvature flow in
  arbitrary codimension.
\newblock {\em Calc. Var. Partial Differential Equations}, 46(3-4):879--889,
  2013.

\bibitem{ChM}
Pascal Cherrier and Albert Milani.
\newblock {\em Linear and quasi-linear evolution equations in {H}ilbert
  spaces}, volume 135 of {\em Graduate Studies in Mathematics}.
\newblock American Mathematical Society, Providence, RI, 2012.

\bibitem{CS19}
Otis Chodosh and Felix Schulze.
\newblock Uniqueness of asymptotically conical tangent flows.
\newblock {\em arXiv preprint arXiv:1901.06369}, 2019.

\bibitem{CIM}
Tobias~Holck Colding, Tom Ilmanen, and William~P. Minicozzi, II.
\newblock Rigidity of generic singularities of mean curvature flow.
\newblock {\em Publ. Math. Inst. Hautes \'{E}tudes Sci.}, 121:363--382, 2015.

\bibitem{CMein}
Tobias~Holck Colding and William~P. Minicozzi, II.
\newblock On uniqueness of tangent cones for {Einstein} manifolds.
\newblock {\em Invent. Math.}, 196(3):515--588, 2014.

\bibitem{CMsurvey}
Tobias~Holck Colding and William~P. Minicozzi, II.
\newblock {{\L ojasiewicz}} inequalities and applications.
\newblock In {\em Surveys in differential geometry 2014. {R}egularity and
  evolution of nonlinear equations}, volume~19 of {\em Surv. Differ. Geom.},
  pages 63--82. Int. Press, Somerville, MA, 2015.

\bibitem{CM15}
Tobias~Holck Colding and William~P. Minicozzi, II.
\newblock Uniqueness of blowups and {{\L ojasiewicz}} inequalities.
\newblock {\em Ann. of Math. (2)}, 182(1):221--285, 2015.

\bibitem{CMdiff}
Tobias~Holck Colding and William~P. Minicozzi, II.
\newblock Differentiability of the arrival time.
\newblock {\em Comm. Pure Appl. Math.}, 69(12):2349--2363, 2016.

\bibitem{CMsing}
Tobias~Holck Colding and William~P. Minicozzi, II.
\newblock The singular set of mean curvature flow with generic singularities.
\newblock {\em Invent. Math.}, 204(2):443--471, 2016.

\bibitem{CMreg}
Tobias~Holck Colding and William~P. Minicozzi, II.
\newblock Regularity of the level set flow.
\newblock {\em Comm. Pure Appl. Math.}, 71(4):814--824, 2018.

\bibitem{CMarnold}
Tobias~Holck Colding and William~P. Minicozzi, II.
\newblock Arnold-{Thom} gradient conjecture for the arrival time.
\newblock {\em Comm. Pure Appl. Math.}, 72(7):1548--1577, 2019.

\bibitem{CM19}
Tobias~Holck Colding and William~P. Minicozzi, II.
\newblock Regularity of elliptic and parabolic systems.
\newblock {\em arXiv preprint arXiv:1905.00085}, 2019.

\bibitem{CMrf}
Tobias~Holck Colding and William~P. Minicozzi~II.
\newblock Singularities of ricci flow and diffeomorphisms.
\newblock In preparation.

\bibitem{CMcomp}
Tobias~Holck Colding and William~P Minicozzi~II.
\newblock Complexity of parabolic systems.
\newblock {\em arXiv preprint arXiv:1903.03499}, 2019.

\bibitem{DX14}
Qi~Ding and Y.~L. Xin.
\newblock The rigidity theorems of self-shrinkers.
\newblock {\em Trans. Amer. Math. Soc.}, 366(10):5067--5085, 2014.

\bibitem{ecker}
Klaus Ecker.
\newblock {\em Regularity theory for mean curvature flow}, volume~57 of {\em
  Progress in Nonlinear Differential Equations and their Applications}.
\newblock Birkh\"{a}user Boston, Inc., Boston, MA, 2004.

\bibitem{ELS18}
Christopher~G Evans, Jason~D Lotay, and Felix Schulze.
\newblock Remarks on the self-shrinking {Clifford} torus.
\newblock {\em Journal f{\"u}r die reine und angewandte Mathematik},
  2020(765):139--170, 2020.

\bibitem{evans}
Lawrence~C. Evans.
\newblock {\em Partial differential equations}, volume~19 of {\em Graduate
  Studies in Mathematics}.
\newblock American Mathematical Society, Providence, RI, second edition, 2010.

\bibitem{F19}
Paul M.~N. Feehan.
\newblock Resolution of singularities and geometric proofs of the {{\L
  ojasiewicz}} inequalities.
\newblock {\em Geom. Topol.}, 23(7):3273--3313, 2019.

\bibitem{F20}
Paul M.~N. Feehan.
\newblock On the {Morse-Bott} property of analytic functions on {Banach} spaces
  with {{\L ojasiewicz}} exponent one half.
\newblock {\em Calc. Var. Partial Differential Equations}, 59(2):Paper No. 87,
  50, 2020.

\bibitem{Gri09}
Alexander Grigor'yan.
\newblock {\em Heat kernel and analysis on manifolds}, volume~47 of {\em AMS/IP
  Studies in Advanced Mathematics}.
\newblock American Mathematical Society, Providence, RI; International Press,
  Boston, MA, 2009.

\bibitem{GZ18}
Qiang Guang and Jonathan~J. Zhu.
\newblock On the rigidity of mean convex self-shrinkers.
\newblock {\em Int. Math. Res. Not. IMRN}, (20):6406--6425, 2018.

\bibitem{H12}
Robert Haslhofer.
\newblock Perelman's lambda-functional and the stability of {R}icci-flat
  metrics.
\newblock {\em Calc. Var. Partial Differential Equations}, 45(3-4):481--504,
  2012.

\bibitem{HM14}
Robert Haslhofer and Reto M\"{u}ller.
\newblock Dynamical stability and instability of {R}icci-flat metrics.
\newblock {\em Math. Ann.}, 360(1-2):547--553, 2014.

\bibitem{zhu2023uniqueness}
Sven Hirsch and Jonathan~J Zhu.
\newblock Uniqueness of blowups for forced mean curvature flow.
\newblock {\em arXiv preprint arXiv:2310.08005}, 2023.

\bibitem{Hui90}
Gerhard Huisken.
\newblock Asymptotic behavior for singularities of the mean curvature flow.
\newblock {\em J. Differential Geom.}, 31(1):285--299, 1990.

\bibitem{Kr15}
Klaus Kr\"{o}ncke.
\newblock Stability and instability of {R}icci solitons.
\newblock {\em Calc. Var. Partial Differential Equations}, 53(1-2):265--287,
  2015.

\bibitem{Loj}
Stanislaw Lojasiewicz.
\newblock Ensembles semi-analytiques.
\newblock {\em Lectures Notes IHES (Bures-sur-Yvette)}, 1965.

\bibitem{S14}
Felix Schulze.
\newblock Uniqueness of compact tangent flows in mean curvature flow.
\newblock {\em J. Reine Angew. Math.}, 690:163--172, 2014.

\bibitem{simon83}
Leon Simon.
\newblock Asymptotics for a class of nonlinear evolution equations, with
  applications to geometric problems.
\newblock {\em Ann. of Math. (2)}, 118(3):525--571, 1983.

\bibitem{S05}
Knut Smoczyk.
\newblock Self-shrinkers of the mean curvature flow in arbitrary codimension.
\newblock {\em Int. Math. Res. Not.}, (48):2983--3004, 2005.

\bibitem{SZ20}
Ao~Sun and Jonathan~J. Zhu.
\newblock Rigidity and {{\L ojasiewicz}} inequalities for clifford
  self-shrinkers.
\newblock {\em arXiv preprint arXiv:2011.01636}, 2020.

\bibitem{W05}
Brian White.
\newblock A local regularity theorem for mean curvature flow.
\newblock {\em Ann. of Math. (2)}, 161(3):1487--1519, 2005.

\end{thebibliography}

\end{document}